\newcommand{\A}{\mathbb{A}}
\newcommand{\F}{\mathbb{F}}
\newcommand{\Z}{\mathbb{Z}}
\newcommand{\Q}{\mathbb{Q}}
\newcommand{\R}{\mathbb{R}}
\newcommand{\C}{\mathbb{C}}
\renewcommand{\AA}{\mathcal{A}}
\newcommand{\DD}{\mathcal{D}}
\newcommand{\LL}{\mathcal{L}}
\newcommand{\MM}{\mathcal{M}}
\newcommand{\PP}{\mathcal{P}}
\newcommand{\Sc}{\mathcal{S}}
\newcommand{\UU}{\mathcal{U}}
\newcommand{\Ha}{\mathfrak{H}}
\newcommand{\fin}{\mathrm{fin}}
\newcommand{\Ind}{\mathrm{Ind}}
\newcommand{\Irr}{\mathrm{Irr}}
\newcommand{\re}{\mathrm{Re}}
\newcommand{\im}{\mathrm{Im}}
\newcommand{\std}{\mathrm{std}}
\newcommand{\spin}{\mathrm{spin}}
\newcommand{\tr}{\mathrm{tr}}
\newcommand{\sgn}{\mathrm{sgn}}
\newcommand{\unit}{\mathrm{unit}}
\newcommand{\Gal}{\mathrm{Gal}}
\newcommand{\AJ}{\mathrm{AJ}}
\newcommand{\Ad}{\mathrm{Ad}}
\newcommand{\cusp}{\mathrm{cusp}}
\newcommand{\cent}{\mathrm{Cent}}
\newcommand{\Sym}{\mathrm{Sym}}
\newcommand{\Lie}{\mathrm{Lie}}
\newcommand{\SL}{\mathrm{SL}}
\newcommand{\GL}{\mathrm{GL}}
\newcommand{\U}{\mathrm{U}}
\newcommand{\Mat}{\mathrm{Mat}}
\newcommand{\SO}{\mathrm{SO}}
\newcommand{\Sp}{\mathrm{Sp}}
\newcommand{\lam}{\lambda}
\renewcommand{\1}{{\bf 1}}
\newcommand{\I}{\sqrt{-1}}
\newcommand{\resp}{resp.~}
\newcommand{\bs}{\backslash}
\newcommand{\ep}{\varepsilon}
\newcommand{\bi}{\mathbf{i}}
\newcommand{\bk}{\mathbf{k}}
\newcommand{\pair}[1]{\langle #1 \rangle}
\newcommand{\half}[1]{\frac{#1}{2}}
\newcommand{\iif}{&\quad&\text{if }}
\newcommand{\other}{&\quad&\text{otherwise}}
\newcommand{\kk}{\mathfrak{k}}
\newcommand{\oo}{\mathfrak{o}}
\newcommand{\p}{\mathfrak{p}}
\newcommand{\g}{\mathfrak{g}}
\newcommand{\z}{\mathfrak{z}}
\renewcommand{\sp}{\mathfrak{sp}}
\newcommand{\so}{\mathfrak{so}}
\newtheorem{thm}{Theorem}[section]
\newtheorem{lem}[thm]{Lemma}
\newtheorem{prop}[thm]{Proposition}
\newtheorem{cor}[thm]{Corollary}
\newtheorem{rem}[thm]{Remark}
\newtheorem{ex}[thm]{Example}
\newtheorem{defi}[thm]{Definition}
\def\iddots{\mathinner{\mkern1mu\raise\p@
	\hbox{.}\mkern2mu\raise4\p@\hbox{.}\mkern2mu
	\raise7\p@\vbox{\kern7\p@\hbox{.}}\mkern1mu}}
\def\adots{\mathinner{\mkern2mu\raise\p@\hbox{.}
 \mkern2mu\raise4\p@\hbox{.}\mkern1mu
 \raise7\p@\vbox{\kern7\p@\hbox{.}}\mkern1mu}}
\title{Applications of Arthur's multiplicity formula 
\\
to Siegel modular forms}
\author{Hiraku Atobe}
\date{}
\subjclass[2010]{Primary 11F70; Secondary 11F46}
\keywords{Arthur's multiplicity formula; Siegel modular forms; Strong multiplicity one theorem}
\address{
Department of Mathematics, Hokkaido University
Kita 10, Nishi 8, Kita-Ku, Sapporo, Hokkaido, 060-0810, Japan 
}
\email{
atobe@math.sci.hokudai.ac.jp
}
\begin{document}
\maketitle
\begin{abstract}
We give two applications of Arthur's multiplicity formula to Siegel modular forms.
The one is a lifting theorem for vector valued Siegel modular forms, 
which contains Miyawaki's conjectures and Ibukiyama's conjectures.
The other is the strong multiplicity one theorem for Siegel modular forms of 
scalar weights and level one.
\end{abstract}

\tableofcontents

\section{Introduction}\label{intro}
In the modern number theory, modular forms and automorphic representations 
are indispensable tools.
In particular, they give automorphic $L$-functions, 
which enjoy Euler products, meromorphic continuations, and functional equations.
\vskip 10pt

We review the classical theory of Siegel modular forms.
Let $S_{2k}(\SL_2(\Z))$ (\resp $S_{k,j}(\Sp_2(\Z))$) be the space of elliptic cusp forms of weight $2k$, 
(\resp the space of Siegel modular cusp forms 
of degree $2$ and of vector weight ${\det}^k\Sym(j)$).
When $f \in S_{2k}(\SL_2(\Z))$ (\resp $f \in S_{k,j}(\Sp_2(\Z))$) is a Hecke eigenform, 
we have the Hecke $L$-function $L(s,f)$ (\resp $L(s,f, \spin)$), 
which is a degree $2$ (\resp degree $4$) $L$-function 
satisfying a functional equation with respect to $s \leftrightarrow 2k-s$ (\resp $s \leftrightarrow 1-s$).
The $L$-function $L(s,f,\spin)$ is called the spinor $L$-function associated $f$.
\par

More generally, for $\bk = (k_1, \dots, k_n) \in \Z^n$ with $k_1 \geq \dots \geq k_n$, 
we denote by $\rho_\bk$ the irreducible representation of $\U(n)$ of highest weight $\bk$, 
and by $S_{\rho_\bk}(\Sp_n(\Z))$ the space of Siegel modular cusp forms 
of degree $n$ and of vector weight $\rho_{\bk}$.
When $\bk = (k,k,\dots, k)$, then $\rho_{\bk} = {\det}^k$, 
and $S_{\rho_\bk}(\Sp_n(\Z)) = S_{k}(\Sp_n(\Z))$ is the space of Siegel modular cusp forms 
of degree $n$ and of scalar weight $k$.
In general, 
for a Hecke eigenform $F \in S_{\rho_\bk}(\Sp_n(\Z))$, 
we have an $L$-function $L(s,F,\std)$ called the standard $L$-function associated to $f$, 
which has degree $2n+1$ and satisfies a functional equation with respect to $s \leftrightarrow 1-s$.
\par

In the classical theory, there are several lifting conjectures.
We recall the following conjectures predicted by Miyawaki \cite{M}, Heim \cite{H}, and Ibukiyama \cite{Ib}.
\vskip 5pt

\noindent{\bf Miyawaki's conjecture of Type $\mathrm{I}$:}
For any Hecke eigenforms $f \in S_{2k-4}(\SL_2(\Z))$ and $g \in S_{k}(\SL_2(\Z))$, 
there should exist a Hecke eigenform $F_{f,g} \in S_{k}(\Sp_3(\Z))$ such that
\[
L(s, F_{f,g}, \std) = L(s,g,\std)L(s+k-2, f)L(s+k-3, f).
\]
\vskip 5pt

\noindent{\bf Miyawaki's conjecture of Type $\mathrm{II}$:}
For any Hecke eigenforms $f \in S_{2k-2}(\SL_2(\Z))$ and $g \in S_{k-2}(\SL_2(\Z))$, 
there should exist a Hecke eigenform $F_{f,g} \in S_{k}(\Sp_3(\Z))$ such that
\[
L(s, F_{f,g}, \std) = L(s,g,\std)L(s+k-1, f)L(s+k-2, f).
\]
\vskip 5pt

\noindent{\bf Ibukiyama's conjecture of Type $\mathrm{I}$:}
For any Hecke eigenform $f \in S_{n+2,2m-3n-2}(\Sp_2(\Z))$
with even $n,m$ such that $m > 2n$ and $n \geq 2$, 
there should exist a Hecke eigenform $F_{f} \in S_{m}(\Sp_{2n}(\Z))$ such that
\[
L(s, F_{f,g}, \std) = \zeta(s) \prod_{i=1}^n L\left( s+\half{n+1}-i, f, \spin \right).
\]
\vskip 5pt

\noindent{\bf Ibukiyama's conjecture of Type $\mathrm{II}$:}
For any Hecke eigenforms $f \in S_{2m-2n}(\SL_2(\Z))$ and $g \in S_{m-2n+2,2n-2}(\Sp_2(\Z))$ 
with $m$ even and $m > 2n-2$, 
there should exist a Hecke eigenform $F_{f,g} \in S_{m}(\Sp_{2n}(\Z))$ such that
\[
L(s, F_{f,g}, \std) = L(s,g,\std) \prod_{i=1}^{2n-2}L\left( s+m-1-i, f \right).
\]
\vskip 5pt

In fact, Miyawaki \cite{M} numerically computed the actions of Hecke operators on $F_{12}$ and $F_{14}$ 
which belong to the one dimensional vector spaces $S_{12}(\Sp_3(\Z))$ and $S_{14}(\Sp_3(\Z))$, 
respectively, 
and predicted the conjectures for them.
The general forms of Miyawaki's conjectures were given by Heim \cite{H}.
Ibukiyama \cite{Ib} also considered his lifting conjectures 
for the non-cuspidal cases in slightly wider situations.
\vskip 10pt

Nowadays, these conjectures should follow from {\bf Arthur's multiplicity formula} \cite[Theorem 1.5.2]{Ar}.
Let $\A = \A_\Q$ be the ring of adeles of $\Q$, and $\A_\fin$ be its finite part.
Arthur's multiplicity formula decomposes 
the space $\AA^2(\Sp_n(\A))$ of square-integrable automorphic forms on $\Sp_n(\Q) \bs \Sp_n(\A)$
into a direct sum of simple $\Sp_n(\A_\fin) \times (\g_\infty, K_\infty)$-modules using global $A$-packets, 
where $\g_\infty = \sp_n(\C)$ is the complexification of the Lie algebra of $\Sp_n(\R)$, 
and 
\[
K_\infty = \left\{
\begin{pmatrix}
\alpha & \beta \\ -\beta & \alpha
\end{pmatrix}
\in \Sp_n(\R)
\middle|
{}^t\alpha\alpha+{}^t\beta\beta = \1_n
\right\}
\]
is the standard maximal compact subgroup of $\Sp_n(\R)$.
Using this formula, we may try to decompose the space $S_{\rho_\bk}(\Sp_n(\Z))$ of Siegel modular forms.
However, the automorphic form $\varphi_F \in \AA^2(\Sp_n(\A))$ 
given by a Siegel modular form $F \in S_{\rho_\bk}(\Sp_n(\Z))$ 
is a ``holomorphic cusp form of vector weight $\rho_{\bk}$''.
This condition implies that the archimedean components of the automorphic representations
appearing in the representation generated $\pi_F$ by $\varphi_F$ 
are the lowest weight module $L(V_{\bk})$.
Therefore, to obtain a decomposition of $S_{\rho_{\bk}}(\Sp_n(\Z))$, we need to consider
when local $A$-packets for $\Sp_n(\R)$ contain $L(V_{\bk})$.
When $k_n > n$, this problem is solved by the works of Adams--Johnson \cite{AJ} and
Arancibia--M{\oe}glin--Renard \cite{AMR} (see Proposition \ref{discrete}). 
By this observation, we obtain a decomposition of $S_{\rho_\bk}(\Sp_n(\Z))$ in this case, 
and we can conclude the following lifting theorem.

\begin{thm}[Lifting Theorem]\label{LT}
Let $\bk = (k_1, \dots, k_n) \in \Z^n$ with $k_1 \geq \dots \geq k_n > n$, 
and let $g \in S_{\rho_\bk}(\Sp_n(\Z))$ be a Hecke eigenform.

\begin{enumerate}
\item[(A)]
For positive integers $k$ and $d$, 
we assume one of the following:
\begin{itemize}
\item
$k+d-1 < k_n-n$, $k>d$ and $k \equiv d+n \bmod 2$; or 
\item
$k-d > k_1 -1$, $k > d$ and $k \equiv d \bmod 2$.
\end{itemize}
Define $\bk' = (k_1', \dots, k_{n+2d}') \in \Z^{n+2d}$ so that $k_1' \geq \dots \geq k'_{n+2d}$ and 
\begin{align*}
&\left\{ k_1'-1, k_2'-2, \dots, k_{n+2d}'-(n+2d) \right\}
\\&=
\left\{ k_1-1, k_2-2, \dots, k_n-n \right\}
\cup
\left\{
k+d-1, k+d-2, \dots, k-d
\right\}. 
\end{align*}
Then for any Hecke eigenform $f \in S_{2k}(\SL_2(\Z))$, 
there exists a Hecke eigenform $F_{f,g} \in S_{\rho_{\bk'}}(\Sp_{n+2d}(\Z))$ such that
\[
L(s, F_{f,g}, \std) = L(s,g,\std) \prod_{i=1}^{2d}L(s+k+d-i, f).
\]

\item[(B)]
For positive integers $k$, $j$ and $d > 0$, 
we assume all of the following:
\begin{itemize}
\item
$k \equiv j \equiv 0 \bmod 2$; 
\item
$k > 2d+1$ and $j > 2d-1$; 
\item
$k_i-i \not\in [\half{j}-d+1, \half{j}+k+d-2]$ for $i = 1, \dots, n$.
\end{itemize}
Define $\bk' = (k_1', \dots, k_{n+4d}') \in \Z^{n+4d}$ so that $k_1' \geq \dots \geq k'_{n+4d}$ and 
\begin{align*}
&\left\{ k_1'-1, k_2'-2, \dots, k_{n+4d}'-(n+4d) \right\}
\\&=
\left\{ k_1-1, k_2-2, \dots, k_n-n \right\}
\\&\cup
\left\{
\half{j}+k+d-2, \half{j}+k+d-3, \dots, \half{j}+k-d-1
\right\}
\cup
\left\{
\half{j}+d, \half{j}+d-1, \dots, \half{j}-d+1
\right\}.
\end{align*}
Then for any Hecke eigenform $f \in S_{k,j}(\Sp_2(\Z))$, 
there exists a Hecke eigenform $F_{f,g} \in S_{\rho_{\bk'}}(\Sp_{n+4d}(\Z))$ such that
\[
L(s, F_{f,g}, \std) = L(s,g,\std) \prod_{i=1}^{2d}L\left( s+d+\half{1}-i, f, \spin \right).
\]
\end{enumerate}
Here, when $n=0$, we interpret $L(s,g,\std)$ to be the Riemann zeta function $\zeta(s)$.
\end{thm}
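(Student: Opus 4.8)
The plan is to use Arthur's multiplicity formula to build directly the square-integrable automorphic representation of $\Sp_{n+2d}(\A)$ (\resp $\Sp_{n+4d}(\A)$) whose holomorphic vectors of weight $\rho_{\bk'}$ produce $F_{f,g}$.

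First I would attach to $g$ its elliptic $A$-parameter $\psi_g$ for $\Sp_n$, a formal sum $\boxplus_i \pi_i\boxtimes\nu_{d_i}$ of cuspidal automorphic representations $\pi_i$ of $\GL_{m_i}(\A)$ tensored with the $d_i$-dimensional representation $\nu_{d_i}$ of the Arthur $\SL_2$, self-dual of orthogonal type with $\sum_i m_id_i=2n+1$. Since $g$ is holomorphic of weight $\rho_\bk$ with $k_n>n$, the representation $\pi_g$ it generates has $\pi_{g,\infty}=L(V_\bk)$ and is unramified at all finite places, and Arthur's multiplicity formula applied to $\pi_g$ (which exists by hypothesis) gives $\langle\cdot,L(V_\bk)\rangle=\ep_{\psi_g}$ on the component group $\Sc_{\psi_g}$. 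To $f$ I would attach, in case (A), the cuspidal automorphic representation $\pi_f$ of $\GL_2(\A)$ generated by $f$ --- self-dual of symplectic type, everywhere unramified, with $L(s,\pi_f)=L(s+k-\half{1},f)$ --- and, in case (B), the $4$-dimensional self-dual symplectic spin parameter $\tau_f$ of $f$ (cuspidal on $\GL_4$ if $f$ is of general type, isobaric otherwise), with $L(s,\tau_f)=L(s,f,\spin)$. The candidate parameter is then
\[
\psi := \psi_g\boxplus(\pi_f\boxtimes\nu_{2d})\quad\text{in case (A)},\qquad \psi := \psi_g\boxplus(\tau_f\boxtimes\nu_{2d})\quad\text{in case (B)}.
\]
Since a tensor of two symplectic representations is orthogonal, the new summand is orthogonal of dimension $4d$ (\resp $8d$), so $\psi$ is self-dual orthogonal of dimension $2(n+2d)+1$ (\resp $2(n+4d)+1$), matching the dual group $\SO_{2(n+2d)+1}(\C)$ of $\Sp_{n+2d}$ (\resp $\SO_{2(n+4d)+1}(\C)$ of $\Sp_{n+4d}$). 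Writing $\psi$ in Arthur normal form --- expanding $\tau_f\boxtimes\nu_{2d}$ by Clebsch--Gordan in case (B) --- and invoking the separation hypotheses ($k>d$ and $k+d-1<k_n-n$ or $k-d>k_1-1$; \resp $k>2d+1$, $j>2d-1$, and $k_i-i\notin[\half{j}-d+1,\half{j}+k+d-2]$), I would check that the summands of $\psi$ are pairwise distinct, so that $\psi$ is an elliptic $A$-parameter.

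Next comes the archimedean place, which is the heart of the matter. I would compute the infinitesimal character of $\psi_\infty$: that of $\psi_{g,\infty}$ is $\{\pm(k_i-i):1\le i\le n\}\cup\{0\}$, while $(\pi_f\boxtimes\nu_{2d})_\infty$ contributes, by Clebsch--Gordan, $\pm\{k-d,k-d+1,\dots,k+d-1\}$ (\resp $(\tau_f\boxtimes\nu_{2d})_\infty$ contributes $\pm\{\half{j}+k-d-1,\dots,\half{j}+k+d-2\}\cup\pm\{\half{j}-d+1,\dots,\half{j}+d\}$); adding these gives $\{\pm(k_i'-i):1\le i\le n+2d\}\cup\{0\}$ (\resp with $n+4d$) for exactly the multiset $\bk'$ in the statement. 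The positivity and disjointness hypotheses ensure $\bk'$ is a well-defined dominant weight with last entry $>n+2d$ (\resp $>n+4d$), so Proposition~\ref{discrete} applies: the local $A$-packet $\Pi_{\psi_\infty}$ of $\Sp_{n+2d}(\R)$ (\resp $\Sp_{n+4d}(\R)$) contains the lowest weight module $L(V_{\bk'})$, realised as a cohomological module $A_{\mathfrak{q}}(\lambda)$ for an explicit $\theta$-stable parabolic subalgebra $\mathfrak{q}$, and the Adams--Johnson recipe also yields its character $\langle\cdot,L(V_{\bk'})\rangle$ on $\Sc_{\psi_\infty}$. Finally I would take, at each finite prime $p$, the unramified member $\pi_p\in\Pi_{\psi_p}$, which carries the trivial character of the component group, set $\pi:=\bigl(\bigotimes_p\pi_p\bigr)\otimes L(V_{\bk'})$, and verify $\prod_v\langle\cdot,\pi_v\rangle=\ep_\psi$ on $\Sc_\psi$. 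Restricted to the subgroup of $\Sc_\psi$ coming from the summands of $\psi_g$, the left-hand side is $\langle\cdot,L(V_\bk)\rangle=\ep_{\psi_g}$, already known; it then remains to see that adjoining the orthogonal summand $\pi_f\boxtimes\nu_{2d}$ (\resp $\tau_f\boxtimes\nu_{2d}$) does not upset the equality --- the new symplectic root numbers $\ep(\half{1},\pi_f\times\pi_i)$ (\resp $\ep(\half{1},\tau_f\times\pi_i)$), which are purely archimedean since everything is unramified, together with the value of the Adams--Johnson character of $L(V_{\bk'})$ on the new generator, match precisely because of the parity hypotheses ($k\equiv d+n$ or $k\equiv d\bmod 2$; \resp $k\equiv j\equiv 0\bmod 2$). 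Arthur's multiplicity formula then places $\pi$ (with multiplicity one) in $\AA^2(\Sp_{n+2d}(\A))$ (\resp $\AA^2(\Sp_{n+4d}(\A))$); a nonzero holomorphic vector of weight $\rho_{\bk'}$ in $\pi$ is a Hecke eigenform $F_{f,g}\in S_{\rho_{\bk'}}(\Sp_{n+2d}(\Z))$ (\resp $S_{\rho_{\bk'}}(\Sp_{n+4d}(\Z))$), its Hecke eigenvalues are pinned down by the unramified parameters of $\psi$, and $L(s,F_{f,g},\std)=L(s,\psi,\std)=L(s,g,\std)\,L(s,\pi_f\boxtimes\nu_{2d})$ (\resp with $\tau_f$), which by the Clebsch--Gordan shift equals $L(s,g,\std)\prod_{i=1}^{2d}L(s+k+d-i,f)$ (\resp $L(s,g,\std)\prod_{i=1}^{2d}L(s+d+\half{1}-i,f,\spin)$); when $n=0$ the factor $\psi_g$ is empty and $L(s,\psi_g,\std)=\zeta(s)$.

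The main obstacle is the archimedean packet statement together with the sign check just described: establishing $L(V_{\bk'})\in\Pi_{\psi_\infty}$ and computing its character demands the Adams--Johnson construction and its comparison with Arthur's (and M\oe glin's) packets via Proposition~\ref{discrete}, and the delicate combinatorics is to realise the holomorphic lowest weight module as the $A_{\mathfrak{q}}(\lambda)$ for the correct $\theta$-stable parabolic and then to match its Adams--Johnson character against $\ep_\psi$ --- this matching is exactly what forces the parity and separation hypotheses in the statement. A secondary subtlety, in case (B), is that $f$ may itself be a Saito--Kurokawa type lift, so that $\tau_f$ is non-cuspidal and $\psi$ acquires extra summands from the Clebsch--Gordan expansion; the ellipticity of $\psi$ and the multiplicity computation must then be re-examined, but the same disjointness hypotheses are designed to cover this case.
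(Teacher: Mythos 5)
Your proposal follows essentially the same route as the paper: form $\psi_{f,g}=\psi_g\boxplus\tau_f[2d]$, use Proposition~\ref{discrete} (Adams--Johnson via Arancibia--M{\oe}glin--Renard) to place $L(V_{\bk'})$ in the archimedean packet, choose the unramified member everywhere else, match the sign condition of Arthur's multiplicity formula using that the Rankin--Selberg root numbers are purely archimedean, and extract $F_{f,g}$ from a lowest-weight vector (the paper also notes cuspidality follows from Wallach's theorem since $L(V_{\bk'})$ is discrete series). One small point: your ``secondary subtlety'' in case (B) about $\tau_f$ being non-cuspidal does not actually arise, because the hypotheses $k>2d+1$, $j>2d-1$, $d>0$ force $k\geq 4$ and $j\geq 2$, and the paper proves (Lemma~\ref{nonSK}) that every Hecke eigenform in $S_{k,j}(\Sp_2(\Z))$ with $k\geq 4$, $j\geq 1$ is of general type, i.e.\ $\tau_f$ is a cuspidal representation of $\GL_4(\A)$ --- in particular there are no Saito--Kurokawa or Yoshida lifts in this vector-valued, level-one range, so the Clebsch--Gordan expansion you worry about never enters.
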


\begin{rem}
\begin{enumerate}
\item
When $n=0$ and $k > d$, $k \equiv d \bmod 2$ in Lifting Theorem (A), 
the lifting $F_{f,1} \in S_{k+d}(\Sp_{2d}(\Z))$ is 
the Duke--Imamoglu--Ibukiyama--Ikeda lift of $f \in S_{2k}(\SL_2(\Z))$ (\cite{I1}),
which is determined up to a constant multiple.

\item
When we set $(n,\bk, k, d)$ to be $(1,(k),k-2,1)$ with even $k \geq 12$ 
(\resp to be $(1,(k-2),k-1,1)$ with even $k \geq 14$) in Lifting Theorem (A), 
we obtain Miyawaki's conjecture of Type $\mathrm{I}$ (\resp of Type $\mathrm{II}$).

\item
When we set $(n,\bk, k, j, d)$ to be $(0, \emptyset, n+2, 2m-3n-2, \half{n})$ 
with even $n,m$ such that $m > 2n$ and $n \geq 2$ in Lifting Theorem (B), 
we obtain Ibukiyama's conjecture of Type $\mathrm{I}$.

\item
Ibukiyama's conjecture of Type $\mathrm{II}$ is not contained in Lifting Theorem.
However, using a fact in Remark \ref{Ib2} below, 
one can prove this conjecture.

\item
It is not known how to construct $F_{f,g}$ in general.
In \cite{I2}, Ikeda suggested a way for the construction of $F_{f,g}$ for some case in Lifting Theorem (A), 
which is called the Miyawaki lifting of $g$.
However, the non-vanishing of this lifting is unknown, i.e., 
Ikeda's construction might be identically zero.
\end{enumerate}
\end{rem}
\vskip 10pt

As another application of Arthur's multiplicity formula together with several supplementary results, 
we can get the following theorem:
\begin{thm}[Strong multiplicity one theorem]\label{SMO}
For $i = 1,2$, 
let $F_i \in S_{k_i}(\Sp_n(\Z))$ be a Hecke eigenform of scalar weight $k_i$.
Suppose that 
for almost all prime $p$, 
the Satake parameter of $F_1$ at $p$ is equal to the one of $F_2$ at $p$.
Assume further that $\{k_1, k_2\} \not= \{\half{n}, \half{n}+1\}$ if $n$ is even.
Then there exists a constant $c \in \C^\times$ such that $F_2 = c F_1$.
\end{thm}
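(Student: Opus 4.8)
The plan is to translate the problem about Siegel Hecke eigenforms into a statement about automorphic representations of $\Sp_n(\A)$ and then apply Arthur's multiplicity formula. First I would attach to each $F_i$ the cuspidal automorphic representation $\pi_{F_i} = \bigotimes_v \pi_{F_i,v}$ of $\Sp_n(\A)$ generated by the adelization $\varphi_{F_i}$; since $F_i$ has level one, $\pi_{F_i,p}$ is unramified for all finite $p$, and since $F_i$ has scalar weight $k_i$, the archimedean component $\pi_{F_i,\infty}$ is the holomorphic discrete series (or the relevant lowest-weight module $L(V_{\bk_i})$ with $\bk_i = (k_i,\dots,k_i)$). The hypothesis that the Satake parameters agree at almost all $p$ means that $\pi_{F_1,p} \cong \pi_{F_2,p}$ for almost all $p$, so $\pi_{F_1}$ and $\pi_{F_2}$ are \emph{nearly equivalent}. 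By Arthur's multiplicity formula, the global $A$-parameter $\psi$ of a square-integrable automorphic representation is determined by the near-equivalence class; hence $\pi_{F_1}$ and $\pi_{F_2}$ have the same $\psi$.

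Next I would use the explicit description of which $A$-parameters $\psi$ can contribute a holomorphic lowest-weight module at the archimedean place (the Adams--Johnson/Arancibia--M\oe glin--Renard classification, Proposition \ref{discrete} as cited) together with the scalar-weight and level-one constraints. The key point is that $\psi = \boxplus_i \phi_i \boxtimes \nu_{d_i}$ is severely constrained: the ``infinitesimal character'' of $\psi_\infty$ must match that of $L(V_{\bk_i})$, which for scalar weight $k_i$ forces the multiset $\{k_i-1, k_i-2, \dots, k_i-n\}$; and each $\phi_i$ must be an everywhere-unramified self-dual cuspidal automorphic representation of some $\GL$. The only such objects are those coming from level-one elliptic modular forms, Riemann zeta shifts, and the trivial character, so the combinatorial possibilities for $\psi$ are very limited — essentially $\psi$ is built out of symmetric-power-type blocks with prescribed Hodge--Tate-like weights. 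I would then check that for each admissible $\psi$, once the archimedean lowest-weight condition is imposed, the local $A$-packet at $\infty$ contains $L(V_{\bk})$ with multiplicity one and for a \emph{unique} $\bk = (k,\dots,k)$; combined with the multiplicity-one part of Arthur's formula (the local packets at finite places are singletons since everything is unramified), this forces $k_1 = k_2$ and $\pi_{F_1} \cong \pi_{F_2}$ as abstract representations.

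Finally, from $\pi_{F_1} \cong \pi_{F_2}$ and $k_1 = k_2$ I would descend back to modular forms: the space of holomorphic vectors of scalar weight $k$ and level one inside $\pi_{F_1} = \pi_{F_2}$ is one-dimensional (the archimedean lowest-weight space is a line and the finite part has a unique spherical line), so $\varphi_{F_1}$ and $\varphi_{F_2}$ are proportional, whence $F_2 = c F_1$ for some $c \in \C^\times$. The role of the excluded case $\{k_1,k_2\} = \{\tfrac n2, \tfrac n2+1\}$ when $n$ is even is precisely where the correspondence between $A$-parameters and lowest-weight modules at $\infty$ fails to be injective on the weight — there the relevant $A$-parameter is non-tempered of Saito--Kurokawa type (or its higher-degree analogue) and its archimedean $A$-packet contains holomorphic lowest-weight modules for two different scalar weights, so near-equivalence cannot distinguish them; hence it must be excluded.

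The main obstacle I anticipate is the archimedean representation theory: one must pin down exactly which $\bk$ (and with what multiplicity) occur as lowest-weight modules in each Adams--Johnson packet $\Pi_{\psi_\infty}$, and verify that outside the stated exceptional case the map $\psi \mapsto (\text{scalar weight }k)$ is injective. This requires a careful bookkeeping of infinitesimal characters and of the structure of $A$-packets for $\Sp_n(\R)$ containing a holomorphic piece — including ruling out that two genuinely different global $\psi$'s (say a tempered one and a Saito--Kurokawa-type one) could both have unramified finite parts with coincident Satake data and both support a scalar-weight holomorphic form, which is exactly what the hypothesis $\{k_1,k_2\}\neq\{\tfrac n2,\tfrac n2+1\}$ is designed to exclude.
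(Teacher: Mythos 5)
Your overall strategy is the same as the paper's: adelize, use near-equivalence to force a common global $A$-parameter, then use local uniqueness at every place to identify the representations, and finally descend. Two places in your write-up are imprecise enough that, as stated, they would not go through.

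First, you assert that ``the local packets at finite places are singletons since everything is unramified.'' This is false: a $p$-adic $A$-packet can be large even when the parameter is unramified. What is actually used is that the packet $\Pi_{\psi_p}$ is multiplicity-free (M{\oe}glin, Theorem~\ref{mult1} and Proposition~\ref{51}) and contains a \emph{unique} unramified member (Proposition~\ref{44}); that is what pins down $\pi_{F_i,p}$ from $\psi_p$. Relatedly, the archimedean uniqueness you plan to ``check by hand'' is precisely M{\oe}glin--Renard's result (Proposition~\ref{MR}): the Adams--Johnson description (Proposition~\ref{discrete}) only covers the discrete-series range $k>n$, and for $0<k\le n$ one genuinely needs \cite{MR} both for the multiplicity-one of $\DD^{(n)}_k$ in $\Pi_{\psi_\infty}$ and for the classification of the exceptional pair $\{n/2,\,n/2+1\}$. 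Citing the classification and ``bookkeeping of infinitesimal characters'' does not by itself rule out two distinct scalar weights sitting in one packet; that is a theorem, not an exercise.

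Second, you pass silently from $\pi_{F_1}\cong\pi_{F_2}$ \emph{as abstract representations} to $\pi_{F_1}=\pi_{F_2}$ \emph{as subspaces of} $\AA^2(\Sp_n(\A))$, writing ``inside $\pi_{F_1}=\pi_{F_2}$.'' That equality is exactly the multiplicity-one statement and is not automatic: Arthur's formula gives $m_{\pi,\psi}\in\{0,1\}$ per parameter, but packets are a priori multisets, so one needs the local multiplicity-one results (Theorem~\ref{mult1}, Propositions~\ref{51}, \ref{AJ}, \ref{MR}) to conclude that $\pi$ occurs at most once in $\Pi_\psi$ and hence at most once in the discrete spectrum — this is the content of the paper's Theorem~\ref{AMF-S}. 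Only then can you say the two spherical lowest-weight lines live in the same subspace and conclude $\varphi_{F_2}=c\,\varphi_{F_1}$. With those two points repaired, your argument matches the paper's.
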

We shall explain an outline of the proof.
\begin{description}
\item[Step 1]
Consider the $\Sp_n(\A_\fin) \times (\g_\infty, K_\infty)$-module $\pi_{F_i}$ 
generated by the cusp form $\varphi_{F_i}$ corresponding to $F_i$.
Since $F_i$ is level one, 
we see that $\pi_{F_i}$ is irreducible
(Lemma \ref{irred}).

\item[Step 2]
By the assumption, 
$\pi_{F_1}$ and $\pi_{F_2}$ are nearly equivalent to each other.
This means that $\pi_{F_1}$ belongs to the same $A$-packet as $\pi_{F_2}$
(Corollary \ref{near}).

\item[Step 3]
By the uniqueness of the unramified representation 
in a $p$-adic local $A$-packet (Proposition \ref{44}), 
and by the uniqueness of the lowest weight module 
in a real local $A$-packet (Proposition \ref{MR}), 
we see that 
$\pi_{F_1}$ and $\pi_{F_2}$ are isomorphic to each other 
as $\Sp_n(\A_\fin) \times (\g_\infty, K_\infty)$-modules.

\item[Step 4]
By the multiplicity one result in local $A$-packets (Propositions \ref{mult1}, \ref{AJ}, \ref{MR}), 
we see that 
$\pi_{F_1}$ and $\pi_{F_2}$ are equal to each other as subspaces of $\AA^2(\Sp_n(\A))$.

\item[Step 5]
By the uniqueness of the everywhere unramified lowest weight vectors in an automorphic representation, 
we see that $\varphi_{F_2}$ is a constant multiple of $\varphi_{F_1}$.
This means that $F_2 = c F_1$ for some $c \in \C^\times$.
\end{description}
\vskip 10pt

This paper is organized as follows. 
In the first part of \S \ref{sec.AMF+} (\S \ref{sec.local}--\S \ref{sec.AMF}), 
we recall the local and global $A$-packets and Arthur's multiplicity formula.
In the last part of \S \ref{sec.AMF+} (\S \ref{sec.Moe}--\S \ref{sec.AJ}), 
we review M{\oe}glin's supplementary results and Adams--Johnson packets.
In section \ref{sec.Siegel+}, we explain the theory of Siegel modular forms and holomorphic cusp forms.
Lifting Theorem (Theorem \ref{LT}) and the strong multiplicity one theorem (Theorem \ref{SMO})
are proven in \S \ref{sec.LT} and \S \ref{sec.SMO}, respectively.
In Appendix \ref{arthur-char}, 
we explain the original definition of Arthur's character formally, and compute it.

\subsection*{Acknowledgments}
This article is mainly based on author's talk 
in the conference ``21st Autumn Workshop on Number Theory''.
The author is grateful to the organizers Tamotsu Ikeda and Tomokazu Kashio 
for giving him the opportunity of this talk.
The author thanks to Tomoyoshi Ibukiyama and Hidenori Katsurada
for telling several lifting conjectures.
This work was supported by the Foundation for Research Fellowships of Japan Society 
for the Promotion of Science for Young Scientists (PD) Grant 29-193. 
\par

\section{Arthur's multiplicity formula and supplementary results}\label{sec.AMF+}
In this section, we recall the general theory for
Arthur's multiplicity formula for symplectic groups together with several supplementary results.

\subsection{Local $A$-parameters and local $A$-packets}\label{sec.local}
First, we recall the notion of local $A$-packets.
Let $\F$ be a local field of characteristic zero, 
and $L_\F$ be the Weil--Deligne group of $\F$, i.e., 
\[
L_\F = \left\{
\begin{aligned}
&W_\F \iif \text{$\F$ is archimedean}, \\
&W_\F \times \SL_2(\C) \iif \text{$\F$ is non-archimedean}, 
\end{aligned}
\right.
\]
where $W_\F$ is the Weil group of $\F$.
The symplectic group of rank $n$
is the split algebraic group over $\F$ defined by
\[
\Sp_n(R) = \left\{g \in \GL_{2n}(R) \middle| 
{}^tg 
\begin{pmatrix}
0 & -\1_n \\ \1_n & 0
\end{pmatrix}
g
=
\begin{pmatrix}
0 & -\1_n \\ \1_n & 0
\end{pmatrix}
\right\}
\]
for an $\F$-algebra $R$, 
whose Langlands dual group is $\SO_{2n+1}(\C)$.
A {\it local $A$-parameter} for $\Sp_n/\F$ is 
a homomorphism $\psi \colon L_\F \times \SL_2(\C) \rightarrow \SO_{2n+1}(\C)$
such that 
\begin{enumerate}
\item
$\psi|W_\F$ is continuous; 
\item
$\psi(W_\F)$ consists of semisimple elements; 
\item
$\psi(W_\F)$ projects onto a relatively compact subset in $\SO_{2n+1}(\C)$; 
\item
$\psi|\SL_2(\C)$ is algebraic for each $\SL_2(\C) \subset L_\F \times \SL_2(\C)$.
\end{enumerate}
Let $\Psi(\Sp_n/\F)$ be the set of conjugacy classes of local $A$-parameters for $\Sp_n/\F$.
For $\psi \in \Psi(\Sp_n/\F)$, 
one can decompose
\[
\psi = m_1 \psi_1 \oplus \dots \oplus m_r \psi_r \oplus (\psi_0 \oplus \psi_0^\vee), 
\]
where 
$\psi_1, \dots, \psi_r$ are distinct irreducible orthogonal representations of $L_\F \times \SL_2(\C)$
with multiplicities $m_1, \dots, m_r \geq 1$, 
and $\psi_0$ is a sum of irreducible representations which are not orthogonal.
Define the {\it component group $A_\psi$} of $\psi$ by 
\[
A_\psi = \bigoplus_{i=1}^r (\Z/2\Z) \alpha_{\psi_i}.
\]
Namely, $A_\psi$ is a free $\Z/2\Z$-module of rank $r$,
and $\{\alpha_{\psi_1}, \dots, \alpha_{\psi_r}\}$ is a basis of $A_\psi$
with $\alpha_{\psi_i}$ associated to $\psi_i$.
For a subrepresentation
\[
\psi' = m'_1 \psi_1 \oplus \dots \oplus m'_r \psi_r \oplus (\psi_0' \oplus \psi_0'^\vee) \subset \psi
\]
with $0 \leq m_i' \leq m_i$ and $\psi_0' \subset \psi_0$, 
we set
\[
\alpha_{\psi'} = \sum_{i=1}^r m'_i \alpha_{\psi_i} \in A_\psi.
\]
The element $\alpha_\psi = \sum_{i=1}^r m_i \alpha_{\psi_i}$
is called the central element in $A_\psi$, and is denoted by $z_\psi$.
The Pontryagin dual of $A_\psi$ is denoted by $\widehat{A_\psi}$.
\par

Let $\Irr(\Sp_n(\F))$ (\resp $\Irr_\unit(\Sp_n(\F))$) be 
the set of equivalence classes of irreducible admissible (\resp unitary) representations of $\Sp_n(\F)$.
The following is the local main theorem of Arthur's endoscopic classification.

\begin{thm}[{\cite[Theorem 1.5.1]{Ar}}]\label{151}
For each $\psi \in \Psi(\Sp_n/\F)$, 
there is a finite multiset $\Pi_\psi$ over $\Irr_\unit(\Sp_n(\F))$ with a map
\[
\Pi_\psi \rightarrow \widehat{A_\psi},\ 
\pi \mapsto \pair{\cdot, \pi}_\psi
\]
satisfying (twisted and standard) endoscopic character identities
and $\pair{z_\psi, \pi}_\psi = 1$.
Moreover, if $\F$ is non-archimedean and $\pi$ is unramified, 
then $\pair{\cdot, \pi}_\psi = \1$.
\end{thm}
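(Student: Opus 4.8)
The statement is the local half of Arthur's endoscopic classification for symplectic groups; the honest ``plan'' is to reconstruct the architecture of \cite{Ar}, since the packets $\Pi_\psi$ and their parametrization by $\widehat{A_\psi}$ are produced by a global induction comparing trace formulas. The comparison is between the stabilized \emph{twisted} trace formula of $\GL_{2n+1}\rtimes\theta$ --- with $\theta$ the outer automorphism whose dual fixed-point group is $\widehat{\Sp_n}=\SO_{2n+1}(\C)$ --- and the stable trace formula of $\Sp_n$ together with those of its elliptic endoscopic groups $\SO_{2a+1}\times\Sp_b$ ($a+b=n$). First I would install the ingredients on the general linear side: the local Langlands correspondence for $\GL_N$, and, attached to $\psi\in\Psi(\Sp_n/\F)$, the unitary representation $\pi_\psi$ of $\GL_{2n+1}(\F)$ built as a product of Speh representations (irreducible by Speh--Tadi\'c), whose twisted character $\tr(\pi_\psi(f)A_{\pi_\psi})$ is the $\theta$-stable distribution that the packet on $\Sp_n$ must match through endoscopic transfer.

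Next I would set up the endoscopic comparison proper: Langlands--Shelstad--Kottwitz--Shelstad transfer of orbital integrals --- now a theorem, via Ng\^o's fundamental lemma, Waldspurger's reductions, and the weighted fundamental lemma --- and Arthur's stabilization of the ordinary and twisted trace formulas. With transfer available and a fixed Whittaker datum on $\Sp_n$ to rigidify everything, the spectral expansions of the two sides become comparable after choosing normalized intertwining operators; the residual discrepancy is, place by place, exactly the data to be extracted: the multiset $\Pi_\psi$ and the signs $\pi\mapsto\pair{\cdot,\pi}_\psi$, the normalization $\pair{z_\psi,\pi}_\psi=1$ being imposed by the Whittaker rigidification.

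The core is then a double induction, on $N=2n+1$ and on the complexity of $\psi$ (number of summands, and tempered versus non-tempered structure). I would first treat \emph{square-integrable} parameters: choose a global parameter $\dot\psi$ localizing to $\psi$ at the place of interest and to well-understood parameters elsewhere, then compare the \emph{discrete} parts of the two trace formulas; Arthur's ``standard model'' argument, with the multiplier trick to separate infinitesimal characters and archimedean $K$-types, forces a finite set of unitary representations at that place whose signed combination realizes the stable twisted character --- these constitute $\Pi_\psi$ with its pairing. For general non-tempered $\psi$ one builds $\Pi_\psi$ from the tempered case by parabolic induction along the Arthur $\SL_2$-factor and checks the twisted and standard endoscopic identities by transitivity of transfer through the relevant Levi; the delicate point is reducibility of the induced modules, which is precisely where M{\oe}glin's analysis of $A$-packets (reviewed in \S\ref{sec.Moe}) and the Aubert--Zelevinsky involution are needed.

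Finally, the unramified normalization: when $\F$ is $p$-adic and $\psi$ is unramified, the fundamental lemma for the unit of the Hecke algebra shows that the spherical function on $\GL_{2n+1}$ transfers to the spherical functions on $\Sp_n$ and on each endoscopic group, so the endoscopic identity on spherical test functions involves only unramified representations; matching Satake parameters then picks out the unique unramified member of $\Pi_\psi$ and forces $\pair{\cdot,\pi}_\psi=\1$ for it. I expect the ``standard model'' step to be the main obstacle: controlling the whole discrete spectrum on both sides at once, with correctly normalized global intertwining operators and Arthur's estimates, so that individual local representations can be peeled out of a priori only stable or $\theta$-stable combinations --- this, not the transfer identities themselves, is what makes \cite{Ar} long and is the part that cannot be shortcut.
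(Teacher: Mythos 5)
The paper does not prove this statement; it is imported verbatim from \cite[Theorem~1.5.1]{Ar}, and the present article uses it as a black box. So there is no ``paper's own proof'' to compare against --- what you have written is a sketch of Arthur's own argument, and at the architectural level (twisted trace formula for $\GL_{2n+1}\rtimes\theta$ against the stable trace formulas of $\Sp_n$ and its endoscopic groups, transfer and the fundamental lemma, Whittaker rigidification, global-to-local double induction with the multiplier trick and the standard model, unramified normalization via the unit-element fundamental lemma) it is a fair description of what Arthur actually does.

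One concrete slip worth fixing: the elliptic endoscopic data of $\Sp_n$ are not of the form $\SO_{2a+1}\times\Sp_b$. Since $\widehat{\Sp_n}=\SO_{2n+1}(\C)$, a semisimple $s$ with $s^2=1$ has connected centralizer $\SO_{2a+1}(\C)\times\SO_{2b}(\C)$ with $a+b=n$, so the elliptic endoscopic groups are $\Sp_a\times\SO_{2b}$ (with $\SO_{2b}$ running over its quasi-split forms), not $\SO_{2a+1}\times\Sp_b$ --- you have written down the product dual to $\Sp_a(\C)\times\SO_{2b+1}(\C)$, which does not embed in $\SO_{2n+1}(\C)$ as a centralizer. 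A smaller caveat: for non-tempered $\psi$, the $A$-packet is not built by parabolic induction from a tempered $A$-packet; Arthur constructs it directly from the endoscopic character identities and the local intertwining relation, and the $\SL_2$-factor enters through $\phi_\psi$ and the normalized intertwining operators rather than through a Levi induction step. The reducibility questions you invoke via M{\oe}glin and Aubert--Zelevinsky concern the finer structure of the resulting packets, after they have been produced, rather than their construction.
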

We call $\Pi_\psi$ the {\it local $A$-packet associated to $\psi$}.

\subsection{Global $A$-parameters and global $A$-packets}\label{globalA}
Next, we define global $A$-packets.
Let $\F$ be a number field.
A {\it (discrete) global $A$-parameter for $\Sp_n/\F$}
is a symbol
\[
\psi = \tau_1[d_1] \boxplus \dots \boxplus \tau_r[d_r], 
\]
where
\begin{itemize}
\item
$\tau_i$ is an irreducible unitary cuspidal self-dual automorphic representation of $\GL_{m_i}(\A)$; 
\item
$d_i$ is a positive integer such that $\sum_{i=1}^r m_id_i = 2n+1$; 
\item
if $d_i$ is odd, then $\tau_i$ is orthogonal, i.e., $L(s, \tau_i, \Sym^2)$ has a pole at $s=1$; 
\item
if $d_i$ is even, then $\tau_i$ is symplectic, i.e., $L(s, \tau_i, \wedge^2)$ has a pole at $s=1$; 
\item
the central character $\omega_i$ of $\tau_i$ satisfies that $\omega_1^{d_1} \cdots \omega_r^{d_r} = \1$; 
\item
if $i \not= j$ and $\tau_i \cong \tau_j$, then $d_i \not= d_j$.
\end{itemize}
Two global $A$-parameters $\psi = \boxplus_{i=1}^{r} \tau_i[d_i]$ 
and $\psi' = \boxplus_{i=1}^{r'} \tau'_i[d'_i]$
are said to be {\it equivalent} 
if $r=r'$ and there exists a permutation $\sigma \in \mathfrak{S}_r$
such that $d'_i = d_{\sigma(i)}$ and $\tau'_i \cong \tau_{\sigma(i)}$.
We denote by $\Psi_2(\Sp_n/\F)$ be the set of equivalence classes of 
discrete global $A$-parameters for $\Sp_n/\F$.
For $\psi = \boxplus_{i=1}^r\tau_i[d_i] \in \Psi_2(\Sp_n/\F)$, 
we define the {\it component group} $A_\psi$ of $\psi$ by 
\[
A_\psi = \bigoplus_{i=1}^r (\Z/2\Z) \alpha_{\tau_i[d_i]}.
\]
Namely, $A_\psi$ is a free $\Z/2\Z$-module of rank $r$,
and $\{\alpha_{\tau_1[d_1]}, \dots, \alpha_{\tau_r[d_r]}\}$ is a basis of $A_\psi$
with $\alpha_{\tau_i[d_i]}$ associated to $\tau_i[d_i]$.
We call $z_\psi = \alpha_{\tau_1[d_1]} + \dots + \alpha_{\tau_r[d_r]} \in A_\psi$ the {\it central element}.
\par

We define {\it Arthur's character} $\ep_\psi \colon A_\psi \rightarrow \{\pm1\}$ by
\[
\ep_\psi(\alpha_{\tau_i[d_i]}) = \prod_{j \not= i}\ep(\tau_i \times \tau_j)^{\min\{d_i,d_j\}}, 
\]
where $\ep(\tau_i \times \tau_j) = \ep(1/2, \tau_i \times \tau_j) \in \{\pm1\}$ is 
the central value of the Rankin--Selberg epsilon factor $\ep(s, \tau_i \times \tau_j)$.
We note that $\ep_\psi(z_\psi) = \prod_{i=1}^r \ep_\psi(\alpha_{\tau_i[d_i]}) = 1$.
An important result of Arthur \cite[Theorem 1.5.3]{Ar} states that 
$\ep(\tau_i \times \tau_j) =1$ if $d_i \equiv d_j \bmod 2$.
In particular, if $d_1=\dots=d_r=1$, 
then $\ep_\psi$ is the trivial character of $A_\psi$.

\begin{rem}
This definition of $\ep_\psi$ might seem to be different from Arthur's original definition.
In Appendix \ref{arthur-char}, we formally explain the original definition, 
and show that it coincides with our definition (Proposition \ref{agree}).
\end{rem}
\par

Let $\psi = \boxplus_{i=1}^r\tau_i[d_i] \in \Psi_2(\Sp_n/\F)$
be a global $A$-parameter 
with $\tau_i$ being an irreducible unitary cuspidal representation of $\GL_{m_i}(\A)$.
For each place $v$ of $\F$, 
we denote by $\phi_{i,v}$ the $m_i$-dimensional representation of $L_{\F_v}$
corresponding to the irreducible representation $\tau_{i,v}$ of $\GL_{m_i}(\F_v)$.
We define a homomorphism $\psi_v \colon L_{\F_v} \times \SL_2(\C) \rightarrow \GL_{2n+1}(\C)$ by 
\[
\psi_v = (\phi_{1,v} \boxtimes S_{d_1}) \oplus \dots \oplus (\phi_{r,v} \boxtimes S_{d_r}), 
\] 
where $S_d$ is the unique irreducible algebraic representation of $\SL_2(\C)$ of dimension $d$.
We call $\psi_v$ the {\it localization of $\psi$ at $v$}.
By \cite[Proposition 1.4.2]{Ar}, 
the homomorphism $\psi_v$ factors through $\SO_{2n+1}(\C) \hookrightarrow \GL_{2n+1}(\C)$.
However, the localization $\psi_v$ is not necessarily a local $A$-parameter for $\Sp_n/\F_v$
because of the lack of the generalized Ramanujan conjecture.
One can define the component group $A_{\psi_v}$ of $\psi_v$ similar to local $A$-parameters.
There is a localization map
\[
A_\psi \rightarrow A_{\psi_v},\ 
\alpha_{\tau_i[d_i]} \mapsto \alpha_{\phi_{i,v} \boxtimes S_{d_i}}.
\]
In particular, we obtain the diagonal map
\[
\Delta \colon A_\psi \rightarrow \prod_v A_{\psi_v}.
\]
\par

Let $\psi \in \Psi_2(\Sp_n/\F)$ be a global $A$-parameter 
and $\psi_v$ be the localization of $\psi$ at $v$.
We can decompose 
\[
\psi_v = \psi_1|\cdot|^{s_1}_{\F_v} \oplus \dots \oplus \psi_r|\cdot|^{s_r}_{\F_v} 
\oplus \psi_0 \oplus \psi_r^\vee|\cdot|^{-s_r}_{\F_v} \oplus \dots \oplus \psi_1^\vee|\cdot|^{-s_1}_{\F_v}, 
\]
where
\begin{itemize}
\item
$\psi_i$ is an irreducible representation of $L_{\F_v} \times \SL_2(\C)$ of dimension $d_i$
such that $\psi_i(W_{\F_v})$ is bounded for $i = 1, \dots, r$; 
\item
$\psi_0 \in \Psi(\Sp_{n_0}/\F_v)$; 
\item
$d_1+ \dots + d_r + n_0 = n$ and $s_1 \geq \dots \geq s_r > 0$.
\end{itemize}
We note that $s_i < 1/2$ by the toward Ramanujan conjecture 
(see \cite[(2.5) Corollary]{JS1} and \cite[Appendix]{RS}).
We define a representation $\phi_{\psi_i}$ of $L_{\F_v}$ by 
\[
\phi_{\psi_i}(w) = \psi_i
\left(w, 
\begin{pmatrix}
|w|_{\F_v}^{\half{1}} & 0 \\ 0 & |w|_{\F_v}^{-\half{1}}
\end{pmatrix}
\right), 
\quad 
w \in L_{\F_v}, 
\]
and we denote by $\tau_{\psi_i}$ the irreducible representation of $\GL_{d_i}(\F_v)$
corresponding to $\phi_{\psi_i}$.
Let $\Pi_{\psi_0}$ be the local $A$-packet associated to $\psi_0$, 
which is a multiset over $\Irr_\unit(\Sp_{n_0}(\F_v))$.
For $\pi_0 \in \Pi_{\psi_0}$, we put
\[
I_{\psi_v}(\pi_0) = \Ind_{P(\F_v)}^{\Sp_n(\F_v)}
(\tau_{\psi_1}|\cdot|^{s_1}_{\F_v} \boxtimes \dots \boxtimes \tau_{\psi_r}|\cdot|^{s_r}_{\F_v} \boxtimes \pi_0), 
\]
where $P$ is a parabolic subgroup of $\Sp_n$ with Levi subgroup
$M_P = \GL_{d_1} \times \dots \times \GL_{d_r} \times \Sp_{n_0}$.
The local $A$-packet $\Pi_{\psi_v}$ associated to the localization $\psi_v$, 
which is a multiset over $\Irr(\Sp_n(\F_v))$, 
is defined by the disjoint union of the multisets of 
the Jordan--H{\"o}lder series of $I_{\psi_v}(\pi_0)$, 
i.e., 
\[
\Pi_{\psi_v} = \bigsqcup_{\pi_0 \in \Pi_{\psi_0}}\Big\{\pi \ \Big| \ 
\text{$\pi$ is an irreducible constituent of $I_{\psi_v}(\pi_0)$}
\Big\}.
\]
Note that $A_{\psi_v} = A_{\psi_0}$.
Define a map 
\[
\Pi_{\psi_v} \rightarrow \widehat{A_{\psi_v}},\ 
\pi \mapsto \pair{\cdot, \pi}_{\psi_v}
\]
by
\[
\pair{\cdot, \pi}_{\psi_v} = \pair{\cdot, \pi_0}_{\psi_0}
\]
if $\pi$ is an irreducible constituent of $I_{\psi_v}(\pi_0)$.
\par

When $v \mid \infty$, we fix a maximal compact subgroup $K_v$ of $\Sp_n(\F_v)$, 
and set $K_\infty = \prod_{v \mid \infty} K_v$.
When $v$ is a real place, we choose $K_v$ as
\[
K_v = \left\{
\begin{pmatrix}
\alpha & \beta \\ -\beta & \alpha
\end{pmatrix}
\in \Sp_n(\F_v) \cong \Sp_n(\R)
\middle|
{}^t\alpha\alpha+{}^t\beta\beta = \1_n
\right\}.
\]
For an irreducible admissible representation $\pi_\infty = \otimes_{v \mid \infty}\pi_v$ 
of $\Sp_n(\F \otimes_\Q \R) \cong \prod_{v \mid \infty}\Sp_n(\F_v)$, 
we denote the $K_\infty$-finite part of $\pi_\infty$ 
by $\pi_{\infty, K_\infty} = \otimes_{v \mid \infty}\pi_{v,K_v}$.
This is a simple $(\g_\infty, K_\infty)$-module, 
where $\g_\infty = \Lie(\Sp_n(\F \otimes_\Q \R)) \otimes_\R \C$
is the complexification of the Lie algebra of $\Sp_n(\F \otimes_\Q \R)$.
\par

Now we are ready to define global $A$-packets.
For a global $A$-parameter $\psi \in \Psi_2(\Sp_n/\F)$, 
we define the global $A$-packet $\Pi_\psi$ by
\[
\Pi_\psi = \left\{
\pi = \left({\bigotimes_{v < \infty}}' \pi_v\right) \otimes \left( \bigotimes_{v \mid \infty} \pi_{v,K_{v}}\right)
\ \middle|\ 
\pi_v \in \Pi_{\psi_v},\ 
\text{$\pair{\cdot, \pi_v}_{\psi_v} = \1$ for almost all $v$}
\right\}.
\]
This is a multiset over the set of 
equivalence classes of simple admissible $\Sp_n(\A_\fin) \times (\g_\infty, K_\infty)$-modules.
By abuse of notation, we simply write $\pi = \otimes'_v \pi_v$ 
for $\pi = (\otimes'_{v<\infty} \pi_v) \otimes (\otimes_{v \mid \infty}\pi_{v,K_v})$. 
For $\pi = \otimes'_v \pi_v \in \Pi_\psi$, 
since $\pair{\cdot, \pi_v}_{\psi_v} = \1$ for almost all $v$, 
one can define a character $\pair{\cdot, \pi}_\psi$ of $\prod_vA_{\psi_v}$ by 
\[
\pair{\cdot, \pi}_\psi = \bigotimes_v \pair{\cdot, \pi_v}_{\psi_v}.
\]

\subsection{Arthur's multiplicity formula}\label{sec.AMF}
We state Arthur's multiplicity formula.
Let $\F$ be a number field.
For a finite place $v < \infty$, 
we denote by $\oo_v$ the ring of integers of $\F_v$.
A smooth function
\[
\varphi \colon \Sp_n(\A) \rightarrow \C
\]
is a {\it square-integrable automorphic form on $\Sp_n(\A)$}
if $\varphi$ satisfies the following conditions:
\begin{enumerate}
\item
$\varphi$ is left $\Sp_n(\F)$-invariant; 
\item
$\varphi$ is right $K$-invariant, 
where $K = K_\fin \times K_\infty$ is the maximal compact subgroup of $\Sp_n(\A)$
with $K_\fin = \prod_{v < \infty}\Sp_n(\oo_v)$; 
\item
$\varphi$ is $\z$-finite, 
where $\z$ is the center of the universal enveloping algebra $\UU(\g_\infty)$ of $\g_\infty$.
\item
$\varphi$ is square-integrable, i.e., 
\[
\int_{\Sp_n(\F) \bs \Sp_n(\A)} |\varphi(g)|^2 dg < \infty.
\]
\end{enumerate}
By \cite[\S 4.3]{BJ}, such a function is of moderate growth.
We also note that any cusp form on $\Sp_n(\A)$ is square-integrable.
The set of square-integrable automorphic forms on $\Sp_n(\A)$ is denoted by $\AA^2(\Sp_n(\A))$.
It is an $\Sp_n(\A_\fin) \times (\g_\infty, K_\infty)$-module.
\par

Recall that for an $A$-parameter $\psi \in \Psi_2(\Sp_n/\F)$, 
\begin{itemize}
\item
its component group $A_\psi$ has Arthur's character $\ep_\psi$; 
\item
there is a diagonal map $\Delta \colon A_\psi \rightarrow \prod_v A_{\psi_v}$; 
\item
we have a character $\pair{\cdot, \pi}_\psi$ on $\prod_v A_{\psi_v}$
associated to $\pi \in \Pi_\psi$.
\end{itemize}
Arthur's multiplicity formula (\cite[Theorem 1.5.2]{Ar}) gives a decomposition of $\AA^2(\Sp_n(\A))$.
\begin{thm}[Arthur's multiplicity formula]\label{AMF}
The discrete spectrum $\AA^2(\Sp_n(\A))$ decomposes into a direct sum
\[
\AA^2(\Sp_n(\A)) \cong \bigoplus_{\psi \in \Psi_2(\Sp_n/\F)} \bigoplus_{\pi \in \Pi_\psi} m_{\pi, \psi} \pi
\]
as an $\Sp_n(\A_\fin) \times (\g_\infty, K_\infty)$-module, 
where the non-negative integer $m_{\pi, \psi}$ is given by
\[
m_{\pi, \psi} = \left\{
\begin{aligned}
&1 \iif \pair{\cdot, \pi}_\psi \circ \Delta = \ep_\psi, \\
&0 \other.
\end{aligned}
\right.
\]
\end{thm}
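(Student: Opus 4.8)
This is \cite[Theorem 1.5.2]{Ar}; what follows is a sketch of the strategy of its proof, assuming the local classification (Theorem \ref{151}) and the setup of \S\ref{globalA}. The argument is a comparison of trace formulas, organized as an induction on $n$ (equivalently on $N=2n+1$). The group $\Sp_n$ has dual group $\SO_{2n+1}(\C)$, and the standard embedding $\SO_{2n+1}(\C)\hookrightarrow\GL_{2n+1}(\C)$ realizes $\Sp_n$ inside the $\theta$-twisted group $\GL_{2n+1}\rtimes\langle\theta\rangle$, where $\theta$ is the outer automorphism preserving the standard orthogonal form. First I would invoke the \emph{stabilization} of the invariant trace formula: the ordinary form for $\Sp_n$ and for its elliptic endoscopic groups (products $\SO_{2a+1}\times\Sp_b$), and the twisted form for $\GL_{2n+1}\rtimes\langle\theta\rangle$ (M{\oe}glin--Waldspurger, building on Arthur, Labesse--Waldspurger, and Kottwitz--Shelstad). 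Combined with M{\oe}glin--Waldspurger's description of the discrete spectrum of $\GL_N$ — every discrete automorphic representation is a Speh module $\tau[d]$ with $\tau$ cuspidal, which upon imposing self-duality gives exactly the parameters $\psi=\boxplus_{i=1}^r\tau_i[d_i]\in\Psi_2(\Sp_n/\F)$ — this isolates, for each $\psi$, a well-defined $\psi$-component $I_{\mathrm{disc},\psi}$ on the spectral side of the $\Sp_n$ formula and matches it against the corresponding components of the stable and twisted formulas.

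The second step converts this spectral identity into a statement about the global $A$-packet $\Pi_\psi$. Here the local input of Theorem \ref{151} is decisive: the twisted and standard endoscopic character identities, applied place by place, express the $\psi_v$-part of the local trace-formula contribution as a sum over $\Pi_{\psi_v}$ weighted by the characters $\pair{\cdot,\pi_v}_{\psi_v}\in\widehat{A_{\psi_v}}$. Globalizing, and using the diagonal map $\Delta\colon A_\psi\to\prod_vA_{\psi_v}$, one reaches a formula of the shape
\[
\mathrm{mult}\big(\pi,\AA^2(\Sp_n(\A))\big)
=\frac{1}{|A_\psi|}\sum_{x\in A_\psi}\ep_\psi(x)\,\prod_v\pair{x_v,\pi_v}_{\psi_v}
\]
for $\pi=\otimes'_v\pi_v\in\Pi_\psi$, where $x_v$ is the localization of $x$ and $\ep_\psi\colon A_\psi\to\{\pm1\}$ is the sign character produced by the normalization of the \emph{global} intertwining operators in the spectral expansion (Arthur's sign lemma, i.e.\ the global intertwining relation). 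The crucial point is to identify $\ep_\psi$ with the explicit character $\ep_\psi(\alpha_{\tau_i[d_i]})=\prod_{j\ne i}\ep(1/2,\tau_i\times\tau_j)^{\min\{d_i,d_j\}}$ used in \S\ref{globalA}: this comes from analysing the normalizing factors of $\GL$-intertwining operators, whose signs and poles at the point of evaluation are governed by Rankin--Selberg $\ep$- and $L$-factors.

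With $\ep_\psi$ identified, the multiplicity formula is immediate from character orthogonality on the finite abelian $2$-group $A_\psi$: the inner product $\prod_v\pair{x_v,\pi_v}_{\psi_v}=(\pair{\cdot,\pi}_\psi\circ\Delta)(x)$ is itself a character of $A_\psi$, so $\frac{1}{|A_\psi|}\sum_{x}\ep_\psi(x)(\pair{\cdot,\pi}_\psi\circ\Delta)(x)$ equals $1$ when $\pair{\cdot,\pi}_\psi\circ\Delta=\ep_\psi$ and $0$ otherwise — precisely the asserted value of $m_{\pi,\psi}$. That the representations outside $\Pi_\psi$, and the contributions of proper parabolic subgroups and of the continuous spectrum, all cancel is part of what the trace-formula comparison delivers.

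The real obstacles are the three deep inputs, not the final bookkeeping. The hardest is the stabilization of the twisted trace formula for $\GL_{2n+1}$ together with Arthur's ``standard model'' induction, which bootstraps the global classification from seed cases while proving that every error term — non-elliptic terms, proper Levi contributions, the continuous spectrum — cancels; this is the technical heart of \cite{Ar}. Next is establishing the global intertwining relation, which simultaneously pins down $\ep_\psi$ and forces the local pairings $\pair{\cdot,\pi_v}_{\psi_v}$ to be normalized compatibly (e.g.\ relative to Whittaker data at split places) so that the product over $v$ is well defined and transforms correctly under $\Delta$. Finally one uses the local existence theorem, Theorem \ref{151}, with its character identities, as a black box at every place, archimedean and non-archimedean alike.
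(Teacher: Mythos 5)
You treat the statement exactly as the paper does: it is quoted from \cite[Theorem 1.5.2]{Ar} with no proof supplied, and your outline of Arthur's trace-formula argument (twisted comparison with $\GL_{2n+1}\rtimes\langle\theta\rangle$, the endoscopic character identities of Theorem \ref{151} applied place by place, identification of $\ep_\psi$ with the explicit Rankin--Selberg character, and character orthogonality on the $2$-group $A_\psi$) is a fair summary of the cited proof. Two small corrections to the sketch: the elliptic endoscopic groups of $\Sp_n$ are products $\SO_{2a}\times\Sp_{b}$ with $a+b=n$ and $\SO_{2a}$ a quasi-split \emph{even} special orthogonal group (dual $\SO_{2a}(\C)\times\SO_{2b+1}(\C)\subset\SO_{2n+1}(\C)$), not $\SO_{2a+1}\times\Sp_b$; and Arthur's sign lemma and the global intertwining relation are distinct (though jointly used) ingredients rather than one and the same statement.
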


We emphasize that 
Arthur's multiplicity formula does not tell us whether $\AA^2(\Sp_n(\A))$ is multiplicity-free or not
since the $A$-packets are multisets.
In order to investigate $\AA^2(\Sp_n(\A))$ more precisely, 
we need to study the structures of local $A$-packets $\Pi_{\psi_v}$.

\subsection{M{\oe}glin's results}\label{sec.Moe}
In the successive works \cite{Moe1, Moe2, Moe3, Moe4, Moe5},  
M{\oe}glin has defined local $A$-packets herself for $p$-adic fields, 
and given many important results.
Xu \cite{X} proved that M{\oe}glin's $A$-packets coincide with Arthur's ones.
Hence one can use M{\oe}glin's results for Arthur's $A$-packets.
\par

First, we let $\F$ be a non-archimedean local field of characteristic zero, 
with the the ring of integers $\oo$.
We denote the cardinality of the residual field of $\F$ by $q$.
\par

For a local $A$-parameter $\psi \in \Psi(\Sp_n/\F)$, 
we have an $A$-packet $\Pi_\psi$, 
which is a multiset over $\Irr_\unit(\Sp_n(\F))$.
One of the most important result of M{\oe}glin is as follows:

\begin{thm}[{\cite{Moe4}, \cite[Theorem 8.12]{X}}]\label{mult1}
The $A$-packet $\Pi_{\psi}$ is multiplicity-free, i.e., 
it is in fact a subset of $\Irr_\unit(\Sp_n(\F))$.
\end{thm}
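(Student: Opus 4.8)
The plan is to invoke M{\oe}glin's explicit construction of the $A$-packets $\Pi_\psi$ (which agrees with Arthur's construction by Xu's theorem, as recalled above) and to show that the combinatorial data she attaches to the members of $\Pi_\psi$ parametrize those members with multiplicity one. First I would reduce to the case of \emph{good parity}. Decompose $\psi = \psi^{\mathrm{bad}} \oplus \psi^{\mathrm{gp}} \oplus (\psi^{\mathrm{bad}})^\vee$, where $\psi^{\mathrm{gp}}$ collects the irreducible summands $\rho \boxtimes S_a \boxtimes S_b$ with $\rho$ self-dual of the parity forced by the orthogonality of $\psi$, and $\psi^{\mathrm{bad}}$ collects the remaining blocks (the non-self-dual ones, and the self-dual ones of the opposite parity), which live on the general linear part of a Levi subgroup $M_P = \GL_{d_1} \times \cdots \times \GL_{d_k} \times \Sp_{n_0}$ of $\Sp_n$. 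One checks that every $\pi \in \Pi_\psi$ has the form $\Ind_{P(\F)}^{\Sp_n(\F)}(\sigma_{\psi^{\mathrm{bad}}} \otimes \pi_0)$ with $\pi_0 \in \Pi_{\psi^{\mathrm{gp}}}$ and $\sigma_{\psi^{\mathrm{bad}}}$ an explicit irreducible (Speh-type) representation of $\GL_{d_1}(\F) \times \cdots \times \GL_{d_k}(\F)$. Two inputs are needed here: that this induced representation is irreducible (a standard-module irreducibility statement, valid because the exponents occurring in an $A$-parameter avoid the reducibility points for induction of unitary Speh representations), and that $\pi_0 \mapsto \Ind_{P(\F)}^{\Sp_n(\F)}(\sigma_{\psi^{\mathrm{bad}}} \otimes \pi_0)$ is injective, which follows by computing the Jacquet module along $P$ and recovering $\pi_0$. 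Granting these, multiplicity-freeness for $\psi$ is equivalent to multiplicity-freeness for $\psi^{\mathrm{gp}}$.

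For $\psi$ of good parity, the members of $\Pi_\psi$ are, following M{\oe}glin, the nonzero representations $\pi(\psi, \underline{\ell}, \underline{\eta})$ indexed by a system $\underline{\ell}$ of nonnegative integers and a system $\underline{\eta}$ of signs attached to the Jordan blocks of $\psi$, subject to admissibility relations (reflecting $\pair{z_\psi, \pi}_\psi = 1$ and the chosen ordering of the blocks); each $\pi(\psi, \underline{\ell}, \underline{\eta})$ is built from a representation of a smaller symplectic group by an alternation of parabolic inductions with Steinberg-type twists and of the $\mathrm{Jac}$-operator manipulations on segments introduced by M{\oe}glin. The heart of the argument --- and the step I expect to be the main obstacle --- is to prove, by induction on $n$ and on the complexity of $\underline{\ell}$, that each $\pi(\psi, \underline{\ell}, \underline{\eta})$ is \emph{irreducible} whenever it is nonzero. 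This requires tracking how the basic building operations interact with irreducibility, with a delicate case division according to whether a given entry of $\underline{\ell}$ attains its maximal admissible value and according to the exceptional self-dual blocks $\rho \boxtimes S_a \boxtimes S_a$; the well-foundedness of the induction --- that the resulting representation is independent, up to isomorphism, of the auxiliary total order put on the Jordan blocks --- is part of the same analysis.

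It then remains to show that distinct admissible data $(\underline{\ell}, \underline{\eta})$ yield non-isomorphic representations, so that $\Pi_\psi$, a priori a multiset, is actually a subset of $\Irr_\unit(\Sp_n(\F))$. From $\pi(\psi, \underline{\ell}, \underline{\eta})$ one reads off its supercuspidal support, hence $\psi$ together with $\underline{\ell}$; and one recovers the sign pattern $\underline{\eta}$ from the multiplicities with which the various unramified twists $\rho|\cdot|^x$ occur in the iterated Jacquet modules $\mathrm{Jac}_{x_1}\cdots\mathrm{Jac}_{x_\ell}(\pi)$, equivalently from the behaviour of $\pi$ under the very operators used to construct it. Combined with the previous step, this shows that the parametrization $(\underline{\ell}, \underline{\eta}) \mapsto \pi(\psi, \underline{\ell}, \underline{\eta})$ is an injection into $\Irr_\unit(\Sp_n(\F))$, which is the assertion. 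As indicated, the technical core --- and by far the most laborious part --- is the inductive proof of irreducibility in the good-parity case, which is precisely what occupies the bulk of M{\oe}glin's series of papers.
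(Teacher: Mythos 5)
The paper does not prove this theorem: it is stated as a citation to M{\oe}glin \cite{Moe4}, with Xu \cite{X} supplying the identification of M{\oe}glin's packets with Arthur's, so there is no in-paper argument to compare yours against. That said, your sketch is a faithful high-level account of the strategy in those references: reduction to the good-parity case via irreducibility of parabolic induction from the general-linear blocks, M{\oe}glin's explicit parametrization of the good-parity packet by data $(\underline{\ell},\underline{\eta})$ attached to Jordan blocks, the inductive proof that the resulting representations are irreducible whenever nonzero, and injectivity of the parametrization recovered from supercuspidal supports and iterated Jacquet module ($\mathrm{Jac}$) computations. Since the paper outsources the entire content of Theorem~\ref{mult1} to \cite{Moe4} and \cite{X}, your proposal is not an alternative to anything in the paper; it is a summary of the cited proof, and as such it is consistent with what those references do, while necessarily leaving the genuinely hard inductive irreducibility step as a black box exactly as the paper does.
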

\par

Recall that an $A$-parameter is a homomorphism 
$\psi \colon W_\F \times \SL_2(\C) \times \SL_2(\C) \rightarrow \SO_{2n+1}(\C)$.
Let $\Delta \colon W_\F \times \SL_2(\C) \rightarrow W_\F \times \SL_2(\C) \times \SL_2(\C)$
be the diagonal map given by $\Delta(w,a) = (w,a,a)$.
For two $A$-parameters $\psi, \psi' \in \Psi(\Sp_n/\F)$, 
their $A$-packets $\Pi_\psi$ and $\Pi_{\psi'}$ can have intersection.

\begin{prop}[{\cite[Corollaire 4.2]{Moe3}}]\label{42}
If $\Pi_{\psi} \cap \Pi_{\psi'} \not= \emptyset$, 
then the diagonal restrictions $\psi \circ \Delta$ and $\psi' \circ \Delta$ are conjugate by $\SO_{2n+1}(\C)$.
\end{prop}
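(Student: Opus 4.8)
The plan is to argue through infinitesimal characters (equivalently, supercuspidal supports). Since $\psi(W_\F)$ is relatively compact, the diagonal restriction $\psi\circ\Delta\colon W_\F\times\SL_2(\C)\to\SO_{2n+1}(\C)$ is a tempered $L$-parameter for $\Sp_n/\F$; decompose it into irreducibles as $\psi\circ\Delta\cong\bigoplus_i\rho_i\boxtimes S_{c_i}$, where each $\rho_i$ is an irreducible bounded representation of $W_\F$ and $c_i$ a positive integer. Attach to $\psi\circ\Delta$ the infinitesimal parameter
\[
\lambda_\psi(w)=(\psi\circ\Delta)\!\left(w,\begin{pmatrix}|w|_\F^{\half{1}}&0\\0&|w|_\F^{-\half{1}}\end{pmatrix}\right),\qquad w\in W_\F,
\]
so that $\lambda_\psi\cong\bigoplus_i\bigl(\rho_i|\cdot|_\F^{\half{c_i-1}}\oplus\rho_i|\cdot|_\F^{\half{c_i-3}}\oplus\cdots\oplus\rho_i|\cdot|_\F^{-\half{c_i-1}}\bigr)$; define $\lambda_{\psi'}$ from $\psi'$ in the same way.

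The first and crucial step is to show that every $\pi\in\Pi_\psi$ has infinitesimal character equal to the one attached to $\lambda_\psi$ --- equivalently, that the supercuspidal support of $\pi$ is the multiset visibly read off from $\lambda_\psi$. This is a feature of M{\oe}glin's construction of the $p$-adic $A$-packet (which Xu's theorem identifies with Arthur's): by construction each member of $\Pi_\psi$ is an irreducible subquotient of a representation parabolically induced from inducing data built out of the Jordan blocks of $\psi$, and parabolic induction does not alter the infinitesimal character, which for that inducing data is precisely the one carried by $\lambda_\psi$. (Alternatively, one may read this off from the stable endoscopic character identity satisfied by $\Pi_\psi$: Langlands--Shelstad transfer preserves infinitesimal characters, every member of $\Pi_\psi$ occurs with nonzero coefficient in the associated stable character, and characters of irreducible representations with distinct infinitesimal characters are linearly independent.) Granting this, if $\pi\in\Pi_\psi\cap\Pi_{\psi'}$ then $\lambda_\psi$ and $\lambda_{\psi'}$ both coincide with the infinitesimal parameter of $\pi$, hence are conjugate by $\SO_{2n+1}(\C)$.

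It then remains to recover $\psi\circ\Delta$ from $\lambda_\psi$, i.e.\ to see that a tempered $L$-parameter valued in $\SO_{2n+1}(\C)$ is determined up to $\SO_{2n+1}(\C)$-conjugacy by its infinitesimal parameter. Since $-\1_{2n+1}$ is central in $\mathrm{O}_{2n+1}(\C)$ and has determinant $-1$, $\SO_{2n+1}(\C)$-conjugacy of such parameters coincides with isomorphism of the underlying $(2n+1)$-dimensional orthogonal representations, so it suffices to reconstruct the isomorphism class of $\psi\circ\Delta$. Fix an irreducible bounded representation $\rho$ of $W_\F$ and collect the constituents of $\lambda_\psi$ that are unramified twists of $\rho$; these are of the form $\rho|\cdot|_\F^{x}$ with the exponents $x$ forming a multiset symmetric about $0$ (each segment $\rho\boxtimes S_c$ in $\psi\circ\Delta$ contributes the symmetric block $\{\half{c-1},\half{c-3},\dots,-\half{c-1}\}$), and such a multiset decomposes uniquely into symmetric segments: the largest exponent present is forced to be the top of a segment of length determined by that exponent, one removes that segment and recurses, exactly as in the Zelevinsky/M{\oe}glin--Tad{\'i}c combinatorics of segments. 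This recovers the multiplicity of each $\rho\boxtimes S_c$ in $\psi\circ\Delta$, hence $\psi\circ\Delta$ itself, and likewise $\psi'\circ\Delta$; equality of infinitesimal parameters then forces $\psi\circ\Delta\cong\psi'\circ\Delta$, i.e.\ conjugacy by $\SO_{2n+1}(\C)$.

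The step I expect to be the main obstacle is the first one: extracting the infinitesimal character of a general member of $\Pi_\psi$ requires unwinding M{\oe}glin's inductive construction of the packet --- the Jacquet-module bookkeeping together with the data of Jordan blocks and the auxiliary parameters governing which subquotients occur --- and checking that this invariant is unchanged at each stage and equals the one coming from $\lambda_\psi$. The remaining combinatorics of segments is routine.
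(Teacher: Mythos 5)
The paper does not prove this proposition; it is imported verbatim from M{\oe}glin's \emph{Comparaison des param{\`e}tres de Langlands et des exposants {\`a} l'int{\'e}rieur d'un paquet d'Arthur}, so there is no in-paper proof to compare against. Your reconstruction is along the right lines and matches the strategy of M{\oe}glin's cited paper, whose title already announces the method: the packet $\Pi_\psi$ has a common cuspidal support (equivalently, infinitesimal parameter) read off from $\lambda_\psi$, and a tempered parameter into $\SO_{2n+1}(\C)$ is recovered from its infinitesimal parameter by the greedy centered-segment algorithm you describe, with the passage from abstract isomorphism of orthogonal representations to $\SO_{2n+1}(\C)$-conjugacy being harmless in the odd orthogonal case. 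The step you flag as the main obstacle is indeed the entire substance of M{\oe}glin's result; your two proposed justifications (reading the cuspidal support off M{\oe}glin's inductive construction, or arguing via linear independence of characters through the endoscopic identity) are both standard, but as written they remain a sketch, so your argument reduces the proposition to that lemma rather than establishing it from scratch. The segment-combinatorics reconstruction step is correct and routine, as you say.
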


Recall that an irreducible representation $\pi$ of $\Sp_n(\F)$ is unramified 
if $\pi$ has a nonzero $\Sp_n(\oo)$-fixed vector.
In this case, $\pi$ is uniquely determined by its Satake parameter $c(\pi)$, 
which is a semisimple conjugacy class of $\SO_{2n+1}(\C)$.
We say that 
an $A$-parameter $\psi \colon L_\F \times \SL_2(\C) \rightarrow \SO_{2n+1}(\C)$ for $\Sp_n/\F$ 
is {\it unramified}
if the restriction of $\psi$ to $L_\F = W_\F \times \SL_2(\C)$ factors through 
the quotient $W_\F \times \SL_2(\C) \twoheadrightarrow W_\F/I_\F$, 
where $I_\F$ is the inertia subgroup of $W_\F$.
In this case, one can write 
\[
\psi = |\cdot|_\F^{s_1} \boxtimes S_{d_1} \oplus \dots \oplus |\cdot|_\F^{s_r} \boxtimes S_{d_r}
\]
with $s_i \in \I\R$.

\begin{prop}[{\cite[Proposition 4.4]{Moe3}}]\label{44}
When $\psi$ is unramified and is of the above form, 
the $A$-packet $\Pi_\psi$ has a unique unramified representation $\pi$.
Its Satake parameter $c(\pi)$ is given by
\[
c(\pi) = \bigoplus_{i=1}^r 
\begin{pmatrix}
q^{-s_i+\half{d_i-1}} &&&\\
&q^{-s_i+\half{d_i-3}}&&\\
&&\ddots&\\
&&&q^{-s_i-\half{d_i-1}}
\end{pmatrix}.
\]
By Theorem \ref{151}, 
the character $\pair{\cdot, \pi}_\psi$ of $A_\psi$ is trivial.
\end{prop}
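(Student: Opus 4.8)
The triviality of $\pair{\cdot,\pi}_\psi$ for the unramified member is nothing but the last assertion of Theorem \ref{151}, so the real content is the existence and uniqueness of an unramified $\pi\in\Pi_\psi$ together with the identification of its Satake parameter. The plan is to route everything through the $L$-parameter $\phi_\psi$ associated to $\psi$, with $\phi_\psi(w)=\psi\!\left(w,\mathrm{diag}(|w|_\F^{1/2},|w|_\F^{-1/2})\right)$. Since $S_{d_i}=\Sym^{d_i-1}$ of $\SL_2(\C)$ carries $\mathrm{diag}(a,a^{-1})$ to $\mathrm{diag}(a^{d_i-1},a^{d_i-3},\dots,a^{-(d_i-1)})$, unwinding gives
\[
\phi_\psi \;=\; \bigoplus_{i=1}^{r}\ \bigoplus_{j=0}^{d_i-1} |\cdot|_\F^{\,s_i+\frac{d_i-1}{2}-j},
\]
a direct sum of unramified characters of $W_\F$ (trivial on $I_\F$ because $|w|_\F=1$ there), valued in $\SO_{2n+1}(\C)$ because $\psi$ is. Evaluating at a Frobenius element (normalized so that $|\mathrm{Frob}|_\F=q^{-1}$) yields precisely the block-diagonal matrix of the statement; in particular that matrix is a semisimple conjugacy class in $\SO_{2n+1}(\C)$, hence a legitimate Satake parameter for $\Sp_n(\F)$.

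For existence and the Satake formula at once, let $\pi^0$ be the unramified representation of $\Sp_n(\F)$ with Satake parameter $\phi_\psi(\mathrm{Frob})$ (unique by the Satake isomorphism, and equal to the Langlands quotient of the unramified principal series with parameter $\phi_\psi$). I would show $\pi^0\in\Pi_\psi$ by specializing the twisted endoscopic character identity of Theorem \ref{151} --- for $\Sp_n$ as a twisted endoscopic group of $\GL_{2n+1}\rtimes\theta$ --- to spherical Hecke algebras and invoking the twisted fundamental lemma: the stable distribution attached to $\psi$ is the twisted transfer of the $\theta$-twisted character of the unramified $\GL_{2n+1}(\F)$-representation with parameter $\phi_\psi$, and on spherical functions this transfer is a nonzero scalar multiple of $\Theta_{\pi^0}$; comparing with its other expression $\sum_{\pi\in\Pi_\psi}\pair{\,\cdot\,,\pi}_\psi(s)\,\Theta_\pi$ and using linear independence of characters forces $\pi^0$ to occur in $\Pi_\psi$. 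Alternatively, one may simply quote the known inclusion $\Pi_{\phi_\psi}\subseteq\Pi_\psi$ together with the fact that the unique unramified member of $\Pi_{\phi_\psi}$ has Satake parameter $\phi_\psi(\mathrm{Frob})$. This is the step I expect to be the main obstacle: making the passage to spherical Hecke algebras rigorous requires careful bookkeeping of the normalization of the twisted transfer, of the distinguished element $s$ used on the $\Sp_n$-side, and of the twisted fundamental lemma computing the spherical part of the transfer; and if one takes $\Pi_{\phi_\psi}\subseteq\Pi_\psi$ as input, the difficulty merely migrates to justifying that inclusion.

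For uniqueness I would use that every $\pi\in\Pi_\psi$ has the same infinitesimal character, the one determined by $\phi_\psi$ --- visible from the construction of $A$-packets, in which the members appear as Jordan--H{\"o}lder constituents of a single parabolically induced representation with fixed cuspidal support (cf.\ \S\ref{globalA}; for genuine local parameters this is part of M{\oe}glin's construction). An unramified representation is pinned down by its infinitesimal character, equivalently by its Satake parameter, so any two unramified members of $\Pi_\psi$ coincide; combined with the multiplicity-freeness of $\Pi_\psi$ (Theorem \ref{mult1}), $\pi^0$ is the unique unramified element, and $\pair{\cdot,\pi^0}_\psi=\1$ by Theorem \ref{151}. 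The remaining verifications (the computation of $\phi_\psi$ and the uniqueness argument) are routine; the delicate point is confined to the existence step above.
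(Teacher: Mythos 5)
You are attempting to prove a result the paper does not itself prove: Proposition \ref{44} is cited verbatim from M{\oe}glin [Moe3, Proposition 4.4], so there is no in-paper argument with which to compare yours. On its own merits your outline is largely sound. The computation of $\phi_\psi$ and of $\phi_\psi(\mathrm{Frob})$ agrees with the Satake parameter in the statement, up to the harmless reordering $j\leftrightarrow d_i-1-j$ within each block (and self-duality of $\psi$ makes the resulting conjugacy class insensitive to the arithmetic/geometric Frobenius normalization). The triviality of $\pair{\cdot,\pi}_\psi$ is indeed just the final clause of Theorem \ref{151}, and your uniqueness argument is correct once one reads ``infinitesimal character'' as ``cuspidal support'': all members of a $p$-adic $A$-packet share the same supercuspidal support, an unramified representation is determined by its Satake parameter, and Theorem \ref{mult1} rules out multiplicity.

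The genuine gap is the existence step, exactly where you flag it, and neither of your two proposed routes actually closes it. The spherical-transfer route requires unwinding the twisted endoscopic character identity against the twisted fundamental lemma on spherical Hecke algebras and keeping track of transfer normalizations and the point $s\in S_\psi$; this is not a routine verification but a substantial piece of the argument carried out in Arthur's Chapter 7 and repackaged by M{\oe}glin, and you have only gestured at it. Your alternative, invoking $\Pi_{\phi_\psi}\subseteq\Pi_\psi$, is not cheaper: for $p$-adic classical groups that inclusion is itself a nontrivial theorem of M{\oe}glin of roughly the same depth, so, as you say yourself, it merely relocates the work. The honest conclusion is that the existence half of this proposition cannot be re-derived here more economically than by citing M{\oe}glin, which is precisely what the paper does.
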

\par

Next, we let $\F$ be a number field and $v$ be a finite place of $\F$.
Recall that
for a global $A$-parameter $\psi \in \Psi_2(\Sp_n/\F)$, 
its localization $\psi_v$ at $v$ might not be a local $A$-parameter.
The local $A$-packet $\Pi_{\psi_v}$ is the disjoint union of the Jordan--H{\"o}lder series of 
induced representations $I_{\psi_v}(\pi_0)$, 
where $\pi_0$ runs over the $A$-packet associated to a local $A$-parameter $\psi_0 \in \Psi(\Sp_{n_0}/\F)$.

\begin{prop}[{\cite[Proposition 5.1]{Moe5}}]\label{51}
For $\pi_0 \in \Pi_{\psi_0}$, 
the induced representation $I_{\psi_v}(\pi_0)$ is irreducible.
Moreover, if $\pi_0, \pi_0' \in \Pi_{\psi_0}$ are not isomorphic to each other, 
then $I_{\psi_v}(\pi_0) \not\cong I_{\psi_v}(\pi_0')$.
In conclusion, $\Pi_{\psi_v}$ is a (multiplicity-free) subset of $\Irr(\Sp_n(\F))$.
\end{prop}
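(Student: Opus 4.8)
The plan is to separate the two assertions: irreducibility of each $I_{\psi_v}(\pi_0)$, and the fact that non-isomorphic $\pi_0$ give non-isomorphic inductions. For the first, recall from the construction in \S\ref{globalA} that
\[
I_{\psi_v}(\pi_0) = \Ind_{P(\F_v)}^{\Sp_n(\F_v)}
\left(\tau_{\psi_1}|\cdot|^{s_1}_{\F_v} \boxtimes \dots \boxtimes \tau_{\psi_r}|\cdot|^{s_r}_{\F_v} \boxtimes \pi_0\right),
\]
where each $\tau_{\psi_i}$ is the Speh-type (unitary) representation attached to the bounded irreducible $\psi_i$ of dimension $d_i$, and $s_1 \ge \dots \ge s_r > 0$ with $s_i < 1/2$. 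The first thing I would do is reduce to the rank-one case: by transitivity of parabolic induction and a standard Langlands-quotient/Jacquet-module argument, $I_{\psi_v}(\pi_0)$ is irreducible provided each two-step induction $\Ind(\tau_{\psi_i}|\cdot|^{s_i} \boxtimes \sigma)$ is irreducible for $\sigma$ an $A$-packet member of a smaller symplectic group, and provided the "ends" $\tau_{\psi_i}|\cdot|^{s_i}$ and $\tau_{\psi_j}|\cdot|^{s_j}$ do not link to each other. The key analytic input is that $0 < s_i < 1/2$: the relevant standard intertwining operator $M(s)$ has its possible poles at the reducibility points dictated by the Plancherel measure, and for a Speh representation $\tau_{\psi_i}$ twisted into this narrow strip there is no reducibility. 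I would invoke M{\oe}glin's own reducibility criteria for induced representations of classical $p$-adic groups (the same machinery underlying \cite{Moe3,Moe4,Moe5}) to conclude irreducibility; the point is that the "Arthur $SL_2$" contributes segments centered at integers or half-integers, and the exponent $s_i$ strictly between $0$ and $1/2$ keeps every such segment from linking either to its own dual or to the segments inside $\pi_0$, whose cuspidal support is again governed by $\psi_0$.

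For the second assertion, suppose $I_{\psi_v}(\pi_0) \cong I_{\psi_v}(\pi_0')$ with $\pi_0, \pi_0' \in \Pi_{\psi_0}$. Taking the Jacquet module along $P$ and using the geometric lemma, together with the fact that $\pi_0, \pi_0'$ are the \emph{only} constituents supported on the relevant cuspidal line (again because all the twisting exponents $s_i$ are outside the critical set), one reads off $\pi_0 \cong \pi_0'$ from the "bottom piece" of the Jacquet module. Concretely, since induction from $P$ is irreducible, the full normalized Jacquet functor $r_P$ applied to $I_{\psi_v}(\pi_0)$ contains the summand $\tau_{\psi_1}|\cdot|^{s_1} \otimes \dots \otimes \tau_{\psi_r}|\cdot|^{s_r} \otimes \pi_0$, and this particular term is characterized among all terms of $r_P(I_{\psi_v}(\pi_0))$ by having its $\GL$-factors in the exact order and exact exponents $s_1 \ge \dots \ge s_r$ (no permutation or dualization can reproduce it, precisely because $s_i \ne 0$ and the $s_i$ are not separated by integers in a way that would allow a Weyl-group element to shuffle a segment past another). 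Comparing this distinguished term on both sides forces $\pi_0 \cong \pi_0'$. The final sentence, that $\Pi_{\psi_v}$ is multiplicity-free, then follows by combining irreducibility of each $I_{\psi_v}(\pi_0)$, the injectivity $\pi_0 \mapsto I_{\psi_v}(\pi_0)$ just proved, and Theorem \ref{mult1}, which tells us $\Pi_{\psi_0}$ itself is multiplicity-free.

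The main obstacle will be the irreducibility of $I_{\psi_v}(\pi_0)$: establishing it rigorously requires the full reducibility calculus for parabolic induction on $p$-adic classical groups (Jacquet-module computations, the Langlands classification, and M{\oe}glin's explicit determination of reducibility points in terms of Jordan blocks), rather than any soft argument. The delicate point is that $\pi_0$ is a member of a genuine (possibly non-tempered) $A$-packet, so its cuspidal support can already contain segments with integral or half-integral exponents; one must check that the Speh segments coming from the $\psi_i|\cdot|^{s_i}$ with $0 < s_i < 1/2$ genuinely avoid linking with those. This is exactly where the bound $s_i < 1/2$ (from \cite{JS1,RS}) is used in an essential way, and where I would lean most heavily on the cited work of M{\oe}glin. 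Everything else — transitivity of induction, the geometric lemma, extracting $\pi_0$ from the bottom of the Jacquet module — is bookkeeping.
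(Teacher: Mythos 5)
The paper does not prove Proposition \ref{51} at all; it is quoted directly from \cite[Proposition 5.1]{Moe5}, with Xu's theorem (that M{\oe}glin's $A$-packets agree with Arthur's) invoked earlier in \S\ref{sec.Moe} to justify using M{\oe}glin's results inside Arthur's framework. So your sketch is being measured against M{\oe}glin's argument, not against anything in this paper. Your outline does locate the right mechanism: the bound $0 < s_i < 1/2$ from \cite{JS1,RS} is what keeps the Speh-type segments $\tau_{\psi_i}|\cdot|^{s_i}_{\F_v}$ from linking with one another, with their duals, or with segments in the cuspidal support of $\pi_0$ (whose exponents all lie on the integer or half-integer lattice), and the injectivity of $\pi_0 \mapsto I_{\psi_v}(\pi_0)$ is read off from the distinguished Jacquet-module term in which every exponent is strictly positive.

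Where your argument stops short is in substituting a segment-linking heuristic for M{\oe}glin's explicit reducibility calculus (Jordan blocks, reducibility points, normalized intertwining operators), and you concede this yourself; the concession is warranted. The delicate step you flag is exactly the one that needs a real computation: since $\pi_0$ is a member of a genuine, possibly non-tempered $A$-packet, its own Jacquet module already carries nontrivial integral and half-integral exponents, and one must verify that no Weyl-group element can shuffle a twisted segment into coincidence with one of those. Nothing in your sketch is wrong, but it remains an outline that defers the substance to \cite{Moe5}, which is also what the paper does by citing the proposition without proof.
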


For two irreducible admissible representations
$\pi \cong \otimes_v' \pi_v$ and $\pi' \cong \otimes'_v \pi_v'$ of $\Sp_n(\A)$, 
we say that $\pi$ is {\it nearly equivalent to $\pi'$} if $\pi_v \cong \pi_v'$ for almost all $v$.

\begin{cor}\label{near}
Let $\psi, \psi' \in \Pi_\psi$ be two global $A$-parameters, 
and take $\pi \in \Pi_\psi$ and $\pi' \in \Pi_{\psi'}$.
Then $\pi$ is nearly equivalent to $\pi'$ if and only if $\psi$ is equivalent to $\psi'$.
\end{cor}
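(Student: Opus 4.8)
The plan is to prove both directions of the equivalence, with the ``only if'' direction being the substantive one. For the ``if'' direction, suppose $\psi$ is equivalent to $\psi'$, so that $\psi_v$ and $\psi'_v$ are conjugate for every place $v$. Then $\Pi_{\psi_v} = \Pi_{\psi'_v}$ as subsets of $\Irr(\Sp_n(\F_v))$ for all $v$. For a finite place $v$ outside a large enough finite set $S$, both $\psi_v$ and $\psi'_v$ are unramified, and by Proposition \ref{44} the packet $\Pi_{\psi_v}$ contains a \emph{unique} unramified representation; since $\pi_v$ and $\pi'_v$ are the unramified members of $\Pi_{\psi_v}$ and $\Pi_{\psi'_v}$ respectively (this is forced, because any $\pi \in \Pi_\psi$ has $\pair{\cdot,\pi_v}_{\psi_v} = \1$ for almost all $v$, hence is unramified there by Theorem \ref{151}), we get $\pi_v \cong \pi'_v$ for all $v \notin S$. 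Enlarging $S$ if necessary to absorb finitely many exceptions, this gives near equivalence.

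For the ``only if'' direction, assume $\pi$ is nearly equivalent to $\pi'$. First I would recover the Satake parameters: for almost all finite $v$, $\pi_v \cong \pi'_v$ is unramified, so $c(\pi_v) = c(\pi'_v)$, and by Proposition \ref{44} this Satake parameter is explicitly determined by the unramified part of $\psi_v$ (resp.\ $\psi'_v$) via the displayed diagonal matrix formula. On the other hand, the localization $\psi_v$ of a global $A$-parameter $\psi = \boxplus_i \tau_i[d_i]$ is, for almost all $v$, the unramified parameter $\boxplus_i (\chi_{i,v} \boxtimes S_{d_i})$ where $\chi_{i,v}$ is the unramified character of $W_{\F_v}$ attached to $\tau_{i,v}$; so the Satake parameter of the unramified member of $\Pi_{\psi_v}$ is $\bigoplus_i \bigl(\text{Satake parameter of }\tau_{i,v}\bigr) \otimes \bigl(\text{diag}(q^{(d_i-1)/2},\dots,q^{-(d_i-1)/2})\bigr)$. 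Thus the hypothesis forces, for almost all $v$, an equality of semisimple conjugacy classes
\[
\bigoplus_{i=1}^{r} c(\tau_{i,v}) \otimes \mathrm{diag}\!\left(q_v^{\frac{d_i-1}{2}}, \dots, q_v^{-\frac{d_i-1}{2}}\right)
=
\bigoplus_{j=1}^{r'} c(\tau'_{j,v}) \otimes \mathrm{diag}\!\left(q_v^{\frac{d'_j-1}{2}}, \dots, q_v^{-\frac{d'_j-1}{2}}\right).
\]

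The heart of the argument is then a \emph{rigidity} statement for isobaric sums on the general linear side: pushing the parameter $\psi$ forward along $\SO_{2n+1}(\C) \hookrightarrow \GL_{2n+1}(\C)$ produces an isobaric automorphic representation $\tau_1 \boxplus S_{d_1} \boxplus \cdots$ of $\GL_{2n+1}(\A)$ (the Speh-type representation $\tau_i[d_i]$ corresponds to $\tau_i |\cdot|^{(d_i-1)/2} \boxplus \cdots \boxplus \tau_i |\cdot|^{-(d_i-1)/2}$), and the displayed equality of Satake parameters at almost all $v$ says precisely that the two isobaric representations attached to $\psi$ and $\psi'$ are nearly equivalent as automorphic representations of $\GL_{2n+1}(\A)$. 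By the strong multiplicity one theorem of Jacquet--Shalika for isobaric automorphic representations of $\GL_N$, near equivalence implies they have the same isobaric constituents with multiplicity. Since each cuspidal $\tau_i$ is determined up to isomorphism by its near-equivalence class (again Jacquet--Shalika, now for cuspidal representations), unwinding the multiset of constituents $\{\tau_i |\cdot|^{(d_i-1)/2}, \dots\}$ recovers the multiset of pairs $\{(\tau_i, d_i)\}$: the multiset of central characters / the ``segment'' structure lets one read off each $d_i$ and each $\tau_i$. Hence $r = r'$ and, after a permutation $\sigma$, $\tau'_j \cong \tau_{\sigma(j)}$ and $d'_j = d_{\sigma(j)}$, i.e.\ $\psi$ is equivalent to $\psi'$.

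The main obstacle is the combinatorial bookkeeping in the last step: disentangling the multiset of shifted cuspidal data $\{\tau_i |\cdot|^{s}\}$ into the correct multiset of Speh segments $\{\tau_i[d_i]\}$. This is where one uses that the defining conditions on a global $A$-parameter forbid repeated pairs ($\tau_i \cong \tau_j$ with $d_i = d_j$ cannot both occur) and pin down the parity constraints, so that the decomposition into segments is unique. Everything else — converting the hypothesis into an equality of Satake parameters, and invoking strong multiplicity one on $\GL_{2n+1}$ — is standard once Proposition \ref{44} is in hand.
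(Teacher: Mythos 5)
Your proof is correct, and the overall skeleton (Satake parameter comparison at almost all places, then strong multiplicity one for isobaric automorphic representations on $\GL_{2n+1}$, then recovering $\psi$ from $\tau_\psi$) is the same as the paper's. But your ``only if'' direction takes a genuinely more direct route. The paper first establishes the stronger intermediate statement $\psi_v \cong \psi_v'$ at almost all $v$: it writes $\pi_v = I_{\psi_v}(\pi_0)$, compares real parts of exponents to match the tempered parts $\psi_0, \psi_0'$, and invokes M{\oe}glin's overlapping-packets criterion (Proposition~\ref{42}) plus the unramifiedness of $\psi_0$ to conclude $\psi_0 \cong \psi_0'$, and only then pushes to the $\GL_{2n+1}$ side. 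You sidestep Proposition~\ref{42} entirely by observing that $c(\pi_v)$ is, by the explicit Satake formula, literally the Satake parameter of $\tau_{\psi,v}$, so near-equivalence of $\pi$ and $\pi'$ directly gives near-equivalence of $\tau_\psi$ and $\tau_{\psi'}$ without first matching the $\SO_{2n+1}$-parameters $\psi_v$ and $\psi_v'$. This buys you a shorter argument that uses fewer inputs; what the paper's detour buys is the local conclusion $\psi_v \cong \psi_v'$, which is stronger than the equality of diagonal restrictions you need but is not required for the corollary. Your combinatorial ``disentangling'' of segments is exactly the injectivity of $\psi \mapsto \tau_\psi$, which the paper cites rather than proves; your sketch of why it works (distinct $d_i$ for each $\tau_i$, parity constraints, uniqueness of the centered-interval decomposition) is the right justification.

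Two small imprecisions worth flagging. First, in the ``if'' direction you deduce that $\pi_v$ is unramified at almost all $v$ from ``$\pair{\cdot,\pi_v}_{\psi_v}=\1$ for almost all $v$ hence unramified by Theorem~\ref{151}''; but Theorem~\ref{151} gives the \emph{converse} implication (unramified $\Rightarrow$ trivial character). Unramifiedness at almost all places is really built into the restricted tensor product appearing in the definition of $\Pi_\psi$, so the conclusion stands but the citation is backwards. Second, Proposition~\ref{44} as stated applies only to genuine local $A$-parameters (bounded image), while the localization $\psi_v$ of a global parameter need not be one; the formula you use for $c(\pi_v)$ does hold, but strictly speaking it comes from the $I_{\psi_v}(\pi_0)$-description of $\Pi_{\psi_v}$ in \S\ref{globalA} together with Proposition~\ref{44} applied to the bounded part $\psi_0$ and Proposition~\ref{51} for irreducibility. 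Neither gap is fatal, but both should be stated carefully.
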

\begin{proof}
Suppose that $\pi, \pi' \in \Pi_\psi$.
Since $\pi_v$ and $\pi'_v$ are unramified for almost all $v$, 
by Propositions \ref{44} and \ref{51}, 
we have $\pi_v \cong \pi_v'$ for almost all $v$.
\par

Conversely, suppose that $\pi$ is nearly equivalent to $\pi'$.
Let $v$ be a finite place of $\F$ such that all of $\pi_v$, $\pi_v'$, $\psi_v$ and $\psi_v'$ are unramified, 
and $\pi_v \cong \pi_v'$.
Write $\pi_v = I_{\psi_v}(\pi_0)$ and $\pi'_v = I_{\psi_v'}(\pi'_0)$
with $\pi_0 \in \Pi_{\psi_0}$ and $\pi_0' \in \Pi_{\psi_0'}$ 
for $\psi_0 \in \Psi(\Sp_{n_0}/\F_v)$ and $\psi_0' \in \Psi(\Sp_{n_0'}/\F_v)$.
Comparing the real parts of the exponents of the eigenvalues of the Satake parameters of $\pi$ and $\pi'$, 
we see that $n_0 = n_0'$ and $\pi_0 \cong \pi_0'$.
By Proposition \ref{42}, we have $\psi_0 \circ \Delta \cong \psi_0' \circ \Delta$.
This implies that $\psi_0 \cong \psi_0'$ since they are both unramified.
Comparing the Satake parameters of $\pi$ and $\pi'$ again, 
we see that $\psi_v \cong \psi_v'$.
As in \cite[\S 1.3]{Ar}, 
global $A$-parameters $\psi$ and $\psi'$ 
define isobaric sums of representations $\tau_{\psi}$ and $\tau_{\psi'}$, 
which are automorphic representations of $\GL_{2n+1}(\A)$.
By the generalized strong multiplicity one theorem of Jacquet--Shalika \cite[Theorem (4.4)]{JS2}, 
the condition $\psi_v \cong \psi_v'$ for almost all $v$ implies that  $\tau_\psi \cong \tau_{\psi'}$.
Since $\psi \mapsto \tau_\psi$ is injective, 
we conclude that $\psi \cong \psi'$.
\end{proof}

\subsection{Adams--Johnson packets}\label{sec.AJ}
In this subsection, we consider $\F = \R$.
In this case, the theory of local $A$-packets is not satisfactory now.
On the other hand, 
Adams and Johnson \cite{AJ} constructed $A$-packets for a certain special class of $A$-parameters.
These $A$-packets are sets and their elements are given by cohomological induction
so that these $A$-packets are relatively easy to understand.
Recently, Arancibia, M{\oe}glin and Renard \cite{AMR} proved that 
the $A$-packets of Adams--Johnson coincide with Arthur's ones.
Hence one can easily understand Arthur's $A$-packets for certain spacial parameters.
\par

Recall that the Weil groups of $\C$ and $\R$ are given by
\[
W_\C = \C^\times,
\quad
W_\R = \C^\times \sqcup \C^\times j, 
\]
respectively, 
where 
\[
j^2 = -1, 
\quad
jzj^{-1} = \overline{z}
\quad
\text{for $z \in \C^\times$}.
\]
There exists a canonical exact sequence
\[
\begin{CD}
1 @>>> W_\C @>>> W_\R @>>> \Gal(\C/\R) @>>> 1.
\end{CD}
\]
\par

There are exactly two quadratic characters of $W_\R$.
One is the trivial representation $\1$, and the other is the sign character
\[
\sgn \colon W_\R \rightarrow \{\pm1\}
\]
given by $\sgn(j) = -1$ and $\sgn(z) = 1$ for $z \in \C^\times$.
\par

For each integer $\alpha$, 
we define a $2$-dimensional representation 
\[
\rho_\alpha \colon W_\R \rightarrow \GL_2(\C)
\]
by 
\[
\rho_\alpha(j) = \begin{pmatrix}
0 & (-1)^\alpha \\ 1 & 0
\end{pmatrix},
\quad
\rho_\alpha(z) = \begin{pmatrix}
\chi_{\alpha}(z) & 0 \\ 0 & \chi_{-\alpha}(z)
\end{pmatrix}
\quad
\text{for $z \in \C^\times$}, 
\]
where the character $\chi_{\alpha}$ of $\C^\times$ is defined by 
$\chi_\alpha(z) = \overline{z}^{-\alpha}(z\overline{z})^{\half{\alpha}}$.
We also write $\chi_{\alpha}(z) = (z/\overline{z})^{\half{\alpha}}$. 
Then we see that:
\begin{itemize}
\item
$\rho_\alpha$ is irreducible when $\alpha \not= 0$; 
\item
$\rho_0 \cong \1 \oplus \sgn$; 
\item
$\rho_\alpha \cong \rho_{-\alpha}$; 
\item
$\rho_\alpha$ is orthogonal (\resp symplectic) if $\alpha$ is even (\resp $\alpha$ is odd).
\end{itemize}
For $\alpha \geq \beta > 0$, 
the root number $\ep(\rho_\alpha \otimes \rho_\beta)$ is given by
\[
\ep(\rho_\alpha \otimes \rho_\beta) = \left\{
\begin{aligned}
&-1 \iif \alpha \equiv 0,\ \beta \equiv 1 \bmod 2, \\
&1 \other.
\end{aligned}
\right.
\]
Moreover, for $\alpha > 0$ and $\delta \in \{0,1\}$,
we have $\ep(\rho_\alpha \otimes \sgn^\delta) = \I^{\alpha+1}$.
\par

For an $A$-parameter $\psi \colon W_\R \times \SL_2(\C) \rightarrow \SO_{2n+1}(\C)$, 
define $\psi_d \colon W_\C \rightarrow \SO_{2n+1}(\C)$ by 
\[
\psi_d(z) = \psi
\left(z, 
\begin{pmatrix}
(z/\overline{z})^{\half{1}} & 0 \\ 0 & (z/\overline{z})^{-\half{1}}
\end{pmatrix}
\right)
\quad 
\text{for $z \in \C^\times$}.
\]
We call an $A$-parameter $\psi \in \Psi(\Sp_n/\R)$ {\it Adams--Johnson}
if $\psi$ is a direct sum of irreducible orthogonal representations of $W_\R \times \SL_2(\C)$, 
and $\psi_d$ is multiplicity-free.
Let $\Psi_\AJ(\Sp_n/\R)$ be the subset of $\Psi(\Sp_n/\R)$ consisting of Adams--Johnson $A$-parameters.
It is easy to see that $\psi$ is Adams--Johnson if and only if $\psi$ is of the form
\[
\psi = \left(\bigoplus_{i=1}^r \rho_{\alpha_i} \boxtimes S_{d_i}\right) \oplus \sgn^\delta \boxtimes S_{d_0}, 
\]
where 
\begin{itemize}
\item
$\alpha_i > 0$ and $d_i > 0$ for $1 \leq i \leq r$; 
\item
$2\sum_{i=1}^r d_i + d_0 = 2n+1$; 
\item
$\alpha_i + d_i \equiv 1 \bmod 2$ for $1 \leq i \leq r$, and $d_0 \equiv 1 \bmod 2$; 
\item
$\delta \in \{0,1\}$ such that $\delta \equiv \sum_{i=1}^r d_i \bmod 2$; 
\item
$\alpha_i-\alpha_{i+1} \geq d_i+d_{i+1}$ for $1 \leq i < r$, 
and $\alpha_r \geq d_r+d_0$. 
\end{itemize}
In this subsection, we fix such $\psi$. 
\par

We denote the standard Cartan involution $g \mapsto {}^tg^{-1}$ of $\Sp_n(\R)$ by $\theta$.
Let
\[
T(\R) = \left\{
\left(
\begin{array}{ccc|ccc}
a_1&&&b_1&&\\
&\ddots&&&\ddots&\\
&&a_n&&&b_n\\
\hline
-b_1&&&a_1&&\\
&\ddots&&&\ddots&\\
&&-b_n&&&a_n
\end{array}
\right)
\in \Sp_n(\R)
\ \middle|\ 
a_i^2+b_i^2=1
\right\}
\]
be a maximal torus of $\Sp_n(\R)$, which is compact.
Fix a (standard) Borel subgroup $B$ of $\Sp_n$ containing $T$
(c.f., see \cite[\S 2.2]{MR}).
We write $L(\R)$ for the subgroup of $\Sp_n(\R)$ consisting of elements of the form
\[
\left(
\begin{array}{ccc|c|ccc|c}
a_1 &&& &b_1&&&\\
&\ddots&& &&\ddots&&\\
&&a_r& &&&b_r&\\
\hline
&&&A &&&&B \\
\hline
-b_1 &&& &a_1&&&\\
&\ddots&& &&\ddots&&\\
&&-b_r& &&&a_r&\\
\hline
&&&C &&&&D 
\end{array}
\right)
\]
where $a_i + \I b_i \in \U(d_i)$ for $i = 1, \dots, r$ and 
\[
\begin{pmatrix}
A & B \\ C & D
\end{pmatrix}
\in \Sp_{d_0-1}(\R).
\]
Hence 
\[
L(\R) \cong \U(d_1) \times \dots \times \U(d_r) \times \Sp_{d_0-1}(\R).
\]
Set 
\[
\Sigma_\psi = W(L(\C),T(\C)) \bs W(\Sp_n(\C),T(\C)) / W(\Sp_n(\R),T(\R)).
\]
For a positive integer $d$, 
we define $\PP_2(d)$ by the set of pairs of non-negative integers $(p,q)$ such that $p+q = d$.
Then we have a canonical bijection
\[
\Sigma_\psi \cong \prod_{i=1}^r \PP_2(d_i).
\]
For $\{(p_i,q_i)\}_i \in \prod_{i=1}^r \PP_2(d_i)$, 
if we set
\[
t(p_i, q_i) = \mathrm{diag}(\underbrace{1,\dots,1}_{p_i}, 
\underbrace{\I, \dots, \I}_{q_i}) \in \GL_{d_i}(\C), 
\]
then a representative of the element in $\Sigma_\psi$ 
corresponding to $\{(p_i,q_i)\}_i \in \prod_{i=1}^r \PP_2(d_i)$ 
is given by
\[
t(\{(p_i,q_i)\}_i) = 
\left(
\begin{array}{ccc|c|ccc|c}
t(p_i,q_i) &&&&&&&\\
&\ddots&&&&&&\\
&&t(p_r,q_r)&&&&&\\
\hline
&&&\1_{d_0-1}&&&&\\
\hline
&&&&t(p_1,q_1)^{-1}&&&\\
&&&&&\ddots&&\\
&&&&&&t(p_r,q_r)^{-1}&\\
\hline
&&&&&&&\1_{d_0-1}
\end{array}
\right).
\]
It is easy to see that 
$L_{\{(p_i,q_i)\}} = t(\{(p_i,q_i)\}_i) \cdot L \cdot t(\{(p_i,q_i)\}_i)^{-1}$ is defined over $\R$, and 
\[
L_{\{(p_i,q_i)\}}(\R) 
\cong 
\U(p_1,q_1) \times \dots \times \U(p_r,q_r) \times \Sp_{d_0-1}(\R).
\]
\par

Set 
\[
\lam_j = \left( \half{\alpha_j+d_j-1}, \half{\alpha_j+d_j-3}, \dots \half{\alpha_j-d_j+1} \right) 
\in \Z^{d_j}
\]
and 
\[
\lam_\psi = 
\left( \lam_1, \dots, \lam_r, \half{d_0-1}, \half{d_0-3}, \dots, 1 \right) \in \Z^{n}.
\]
Let $\rho = (n,n-1, \dots, 1)$ be the half sum of the positive roots of $T$ with respect to $B$. 
Then there exists a unitary character 
$\chi_{\lam_\psi} \colon L(\R) \rightarrow \C^\times$ such that
the restriction to $T(\R)$ is $\lam_\psi-\rho \in \Z^n \cong X^*(T)$. 
\par

For $w \in W(\Sp_n(\C), T(\C))$, 
we define
\[
\pi_w = A_{w^{-1}Qw}(w^{-1}\chi_{\lam_\psi})
\]
to be the derived functor module, 
where $Q$ is a $\theta$-stable parabolic subgroup of $\Sp_n$ with Levi subgroup $L$.
It is nonzero and irreducible with infinitesimal character $\lam_\psi$.
Moreover, $\pi_{w} \cong \pi_{w'}$ if and only if the images of $w$ and $w'$ in $\Sigma_\psi$
are equal to each other.

\begin{thm}[\cite{AJ}, \cite{AMR}]\label{AJ}
For $\psi = \oplus_{i=1}^r \rho_{\alpha_i} \boxtimes S_{d_i} \oplus \sgn^\delta \boxtimes S_{d_0} 
\in \Psi_\AJ(\Sp_n/\R)$ with $\alpha_i-\alpha_{i+1} \geq d_i + d_{i+1}$ for $1 \leq i < r$.
Then the $A$-packet $\Pi_\psi$ is given by the (multiplicity-free) set
\[
\Pi_\psi = \{\pi_w \ |\ w \in \Sigma_{\psi}\}.
\]
When $w \in \Sigma_\psi$ corresponds to $\{(p_i,q_i)\}_i \in \prod_{i=1}^r \PP_2(d_i)$, 
the character $\pair{\cdot, \pi_w}_\psi$ is given so that
$\pair{z_\psi, \pi_w}_\psi = 1$ and 
\[
\pair{\alpha_{\rho_{\alpha_i} \boxtimes S_{d_i}}, \pi_w}_\psi = 
(-1)^{\half{p_i-q_i-\delta_i}} 
\]
for $i = 1, \dots, r$, 
where 
\[
\delta_{i} = \left\{
\begin{aligned}
&0 \iif \text{$d_i$ is even}, \\
&(-1)^{\sum_{j=1}^{i-1}d_j} \iif \text{$d_i$ is odd}.
\end{aligned}
\right.
\]
\end{thm}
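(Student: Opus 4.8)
The plan is to assemble three ingredients: the explicit construction of the packet, together with its internal characters, due to Adams--Johnson \cite{AJ}; the theorem of Arancibia--M{\oe}glin--Renard \cite{AMR} that this Adams--Johnson packet coincides with Arthur's packet $\Pi_\psi$; and a direct bookkeeping that rewrites the Adams--Johnson labelling of $\pair{\cdot,\pi_w}_\psi$ in the normalization $\pair{z_\psi,\pi_w}_\psi=1$ used throughout the paper. The first two are cited; the third is where the actual work lies, and I would treat the description of the set and the character formula in turn.

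First I would pin down the underlying set. For $\psi$ of the displayed shape one has the $\theta$-stable parabolic $Q$ with Levi $L\cong\U(d_1)\times\dots\times\U(d_r)\times\Sp_{d_0-1}(\R)$ and, after conjugating by the elements $t(\{(p_i,q_i)\}_i)$, the real forms $L_{\{(p_i,q_i)\}}(\R)\cong\prod_{i=1}^r\U(p_i,q_i)\times\Sp_{d_0-1}(\R)$; cohomological induction of the unitary character $\chi_{\lam_\psi}$, suitably translated by $w$, produces the modules $\pi_w=A_{w^{-1}Qw}(w^{-1}\chi_{\lam_\psi})$. By Vogan--Zuckerman theory each $\pi_w$ is nonzero, irreducible and unitary with infinitesimal character $\lam_\psi$, and $\pi_w\cong\pi_{w'}$ exactly when $w$ and $w'$ have the same image in $\Sigma_\psi$; this is the combinatorial statement recorded just before the theorem, and together with the bijection $\Sigma_\psi\cong\prod_{i=1}^r\PP_2(d_i)$ it shows that $\{\pi_w:w\in\Sigma_\psi\}$ is multiplicity-free. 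The one genuinely deep point --- that this set is Arthur's packet $\Pi_\psi$ --- is the content of \cite{AMR}: there the stable character of $\{\pi_w\}$ and its twisted endoscopic transfers are matched against the distributions characterizing $\Pi_\psi$ in \cite{Ar}, using the endoscopic character identities for cohomologically induced modules. I would simply invoke \cite{AMR} here rather than reprove it.

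Next I would establish the character formula. Adams--Johnson equip their packet with a pairing with their own component group, and the task is to identify it with Arthur's $\pair{\cdot,\pi_w}_\psi$ under the canonical identification of component groups. Since $\pi_w$ indexed by $\{(p_i,q_i)\}_i$ is cohomologically induced from the real form whose $i$-th factor is $\U(p_i,q_i)$, its label records the block signatures, and on the generator $\alpha_{\rho_{\alpha_i}\boxtimes S_{d_i}}$ it depends only on $p_i-q_i$. Matching with Arthur's normalization requires choosing a compatible base point in each block and a suitable twist of the component-group identification; tracking these introduces the sign $\delta_i$, which records whether $d_i$ is even or odd and, for odd $d_i$, the parity of $\sum_{j<i}d_j$. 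One checks $p_i-q_i\equiv d_i\equiv\delta_i\bmod 2$, so $\half{p_i-q_i-\delta_i}$ is an integer, and the pairing on $\alpha_{\rho_{\alpha_i}\boxtimes S_{d_i}}$ comes out as $(-1)^{\half{p_i-q_i-\delta_i}}$. Finally, imposing $\pair{z_\psi,\pi_w}_\psi=1$ fixes the value on the one remaining generator $\alpha_{\sgn^\delta\boxtimes S_{d_0}}$, forcing it to equal $\prod_{i=1}^r(-1)^{\half{p_i-q_i-\delta_i}}$. For the precise signs, and for the identification of the Adams--Johnson and Arthur component groups, I would follow the conventions of \cite{AJ} and \cite{AMR}.

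I expect the real obstacle to be exactly this last piece of bookkeeping: reconciling the Adams--Johnson component-group pairing with Arthur's and controlling every sign --- the value of $\delta_i$, the effect of the $z_\psi$-normalization, the choice of base point, and the dependence on the fixed Borel $B$ and on the ordering $\alpha_1>\dots>\alpha_r$ built into $\psi$. The description of the set $\{\pi_w\}$ and its multiplicity-freeness, by contrast, is either standard Vogan--Zuckerman theory or a direct appeal to \cite{AJ} and \cite{AMR}.
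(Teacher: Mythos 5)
Your proposal matches the paper's treatment: Theorem~\ref{AJ} is stated in the paper without proof, as an import of the construction of \cite{AJ} together with the identification of Adams--Johnson packets with Arthur packets in \cite{AMR}, and you likewise defer the substance to those references. Your additional sketch of the sign bookkeeping (the parity check $p_i-q_i\equiv d_i\equiv\delta_i\bmod 2$, the role of the normalization $\pair{z_\psi,\pi_w}_\psi=1$ in fixing the value on $\alpha_{\sgn^\delta\boxtimes S_{d_0}}$, and the dependence on block signatures $p_i-q_i$) is consistent with the stated formula, though you correctly flag that the precise reconciliation of the Adams--Johnson and Arthur component-group pairings is the part you have not carried out; the paper leaves that same reconciliation implicit.
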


For example, suppose that $d_1 = \dots = d_r = d_0 = 1$.
We note $r = n$ and $\delta \equiv n \bmod 2$.
If $w \in \Sigma_\psi$ corresponds to $\{(p_i,q_i)\}_i \in \PP_2(1)^n$, 
then $\pi_w$ is the discrete series representation of $\Sp_n(\R)$
with Harish-Chandra parameter 
$((p_1-q_1)\lam_1, \dots, (p_n-q_n)\lam_n)$, 
where the infinitesimal character of $\pi_w$ is $\lam_\psi = (\lam_1, \dots, \lam_n)$.
The character $\pair{\cdot, \pi_w}_\psi$ satisfies that
\[
\pair{\alpha_{\rho_{\alpha_i}}, \pi_w}_\psi = (-1)^{i-1}(p_i-q_i)
\]
for $i=1, \dots, n$.

\section{Siegel modular forms and holomorphic cusp forms}\label{sec.Siegel+}
In this section, we apply Arthur's multiplicity formula to Siegel modular forms.
We prove two theorems in Introduction (\S \ref{intro}).

\subsection{Siegel modular forms}\label{sec.Siegel}
In this subsection, we put $\F = \Q$.
Let
\[
\Ha_n = \left\{Z \in \Mat_n(\C)\ |\ {}^tZ = Z,\ \im(Z) > 0\right\}
\]
be the Siegel upper half space of degree $n$.
Here, for a real symmetric matrix $Y$, we write $Y>0$ if $Y$ is positive definite.
The symplectic group $\Sp_n(\R)$ acts on $\Ha_n$ by 
\[
g\pair{Z} = (AZ+B)(CZ+D)^{-1},
\quad
g = \begin{pmatrix}
A & B \\ C & D
\end{pmatrix}
\in \Sp_n(\R), \ 
Z \in \Ha_n.
\]
The canonical automorphy factor $J(g,Z)$ is defined by
\[
J(g,Z) = CZ+D \in \GL_n(\R),
\quad
g = \begin{pmatrix}
A & B \\ C & D
\end{pmatrix}
\in \Sp_n(\R), \ 
Z \in \Ha_n.
\]
For $\bk = (k_1, \dots, k_n) \in \Z^n$ with $k_1 \geq \dots \geq k_n$, 
the irreducible representation of $\U(n)$ with highest weight $(k_1, \dots, k_n)$
is denoted by $(\rho_\bk, V_{\bk})$.
It is extended to a holomorphic representation of $\GL_n(\R)$.
Then $\rho_\bk(J(g, Z)) \in \GL(V_\bk)$ is an automorphy factor.
When $\bk = (k, k, \dots, k)$, we have $(\rho_\bk, V_{\bk}) = ({\det}^k, \C)$.

\begin{defi}
Let $\bk = (k_1, \dots, k_n) \in \Z^n$ with $k_1 \geq \dots \geq k_n$. 
A $V_\bk$-valued holomorphic function 
$F \colon \Ha_n \rightarrow V_{\bk}$
is a Siegel modular cusp form of vector weight $\rho_\bk$ 
if 
\begin{enumerate}
\item
$F(\gamma \pair{Z}) = \rho_{\bk}(J(\gamma, Z)) F(Z)$ for $\gamma \in \Sp_n(\Z)$ and $Z \in \Ha_n$;
\item
$F$ has a Fourier expansion of the form
\[
F(Z) = \sum_{\substack{T \in \Sym_n(\Q)\\ T > 0}}A_F(T) e^{2\pi\I \tr(TZ)}, 
\quad A_F(T) \in V_\bk.
\]
\end{enumerate}
\end{defi}

The space of Siegel modular cusp forms of vector weight $\rho_\bk$
is denoted by $S_{\rho_\bk}(\Sp_n(\Z))$.
There is a Hecke theory for $S_{\rho_\bk}(\Sp_n(\Z))$.
When $F \in S_{\rho_\bk}(\Sp_n(\Z))$ is a Hecke eigenform, 
for each prime $p$, 
the {\it Satake parameter} 
\[
(\beta_{p,1}^{\pm}, \dots, \beta_{p,n}^{\pm}) \in (\C^\times)^n/\mathfrak{S_n} \ltimes \{\pm1\}^n
\]
is associated to $F$.
Then the {\it standard $L$-function attached to $F$} is defined by 
\[
L(s,F,\std) = \prod_p 
\left(
(1-p^{-s})^{-1}\prod_{i=1}^n (1-\beta_{p,i}p^{-s})^{-1}(1-\beta_{p,i}^{-1}p^{-s})^{-1}
\right)
\]
for $\re(s) \gg 0$.
\par

Set $\bi = \I \cdot \1_n \in \Ha_n$.
The stabilizer of $\bi$ in $\Sp_n(\R)$ is the standard maximal compact subgroup
\[
K_\infty = \left\{
\begin{pmatrix}
\alpha & \beta \\ -\beta & \alpha
\end{pmatrix}
\in \Sp_n(\R)
\middle|
{}^t\alpha\alpha+{}^t\beta\beta = \1_n
\right\}.
\]
For a Siegel modular cusp form $F \in S_{\rho_\bk}(\Sp_n(\Z))$, 
one can define a $V_{\rho_\bk}$-valued function
$\Phi_F \colon \Sp_n(\F) \bs \Sp_n(\A) \rightarrow V_{\rho_\bk}$ by 
\[
\Phi_F (\gamma g_\infty \kappa_\fin) = \rho_{\bk}(J(g_\infty, \bi))^{-1} F(g_\infty\pair{\bi})
\]
for $\gamma \in \Sp_n(\Q)$, $g_\infty \in \Sp_n(\R)$ and $\kappa_\fin \in K_\fin = \Sp_n(\widehat{\Z})$.
By a similar argument to \cite[\S 4, Lemmas 5, 7]{AS}, $\Phi_F$ satisfies the following:
\begin{enumerate}
\item
$\Phi_F(g \kappa_\infty) = \overline{\rho_{\bk}}(\kappa_\infty)^{-1} \Phi_F(g)$
for $\kappa_\infty \in K_\infty \subset \Sp_n(\R)$, 
where $K_\infty$ is identified with $\U(n)$ by 
\[
\begin{pmatrix}
\alpha & \beta \\ -\beta & \alpha
\end{pmatrix}
\mapsto 
\alpha+\beta\I, 
\]
and the representation $\overline{\rho_{\bk}}$ of $\U(n)$ is defined by 
$\overline{\rho_{\bk}}(a) =  \rho_\bk(\overline{a})$ for $a \in \U(n)$; 
\item
$\p^- \Phi_F = 0$, 
where 
\[
\p^- = \left\{
\begin{pmatrix}
A & -\I A \\ -\I A & -A
\end{pmatrix}
\ \middle|\ 
A \in \Sym_2(\C)
\right\}
\subset \g_\infty;
\]
\item
$\Phi_F$ is a cusp form, i.e., 
\[
\int_{N(F) \bs N(\A)} \Phi_F(ng) dg = 0
\]
for $g \in \Sp_n(\A)$ and 
for $N$ being the unipotent radical of any proper $F$-parabolic subgroup $P$ of $\Sp_n$.
\end{enumerate}
Note that $\overline{\rho_{\bk}}$ is isomorphic to the contragredient representation of $\rho_\bk$, 
i.e., 
there exists a non-degenerate bilinear form 
$\pair{\cdot, \cdot}$ on $V_\bk \times V_\bk$ such that
\[
\pair{\rho_\bk(a)v, \rho_{\bk}(\overline{a})v'} = \pair{v,v'}
\]
for $v,v' \in V_\bk$ and $a \in \U(n)$.
Then for $v \in V_\bk$, 
the function
\[
\varphi_{F, v}(g) = \pair{v,\Phi_F(g)} \in \C
\]
is a cusp form on $\Sp_n(\Q) \bs \Sp_n(\A)$.
Note that for fixed $g \in \Sp_n(\A)$, 
the function $K_\infty \ni \kappa_\infty \mapsto \varphi_{F, v}(g \kappa_\infty)$
is a matrix coefficient of $\rho_\bk$.
In particular, the right translations of $\varphi_{F,v}$ under $K_\infty$ for $v \in V_\bk$
form an irreducible representation of $K_\infty$ which is isomorphic to $\rho_\bk$.
We call $\varphi_{F,v}$ a {\it holomorphic cusp form of vector weight $\rho_{\bk}$}.
\par

\subsection{Lowest weight modules}
Let 
\[
\kk_\C = \Lie(K_\infty) \otimes_\R \C
=\left\{
\begin{pmatrix}
A & B \\ -B & A
\end{pmatrix}
\in \Mat_{2n}(\C)
\ \middle|\ 
A=-{}^tA,\ 
B = {}^tB
\right\}
\]
be the complexification of Lie algebra of $K_\infty$.
Fix $\bk = (k_1, \dots, k_n) \in \Z^n$ with $k_1 \geq \dots \geq k_n$.
We denote the differential of the representation $\rho_{\bk}$ of $K_\infty$ by 
$d\rho_{\bk} \colon \kk_\C \rightarrow \mathrm{End}(V_\bk)$.
Since $[\kk_\C, \p^-] \subset \p^-$, 
the representation $d\rho_{\bk}$ can be extended to a representation of $\kk_\C \oplus \p^-$
by setting $d\rho_{\bk}(\p^-) = 0$.
Let $\UU(\g_\infty)$ (\resp $\UU(\kk_\C \oplus \p^-)$) 
be the universal enveloping algebra of $\g_\infty$ (\resp $\kk_\C \oplus \p^-$).
Consider the generalized Verma module 
\[
\MM(V_{\bk}) = \UU(\g_\infty) \otimes_{\UU(\kk_\C \oplus \p^-)} V_{\bk}. 
\]
It is a $(\g_\infty, K_\infty)$-module by setting 
$\kappa_\infty \cdot (1 \otimes v) = 1 \otimes \rho_{\bk}(\kappa_\infty)v$ 
for $\kappa_\infty \in K_\infty$ and $v \in V_\bk$. 
It is known that $\MM(V_{\bk})$ has a unique irreducible quotient, 
which we denote by $L(V_\bk)$.
We call the $(\g_\infty, K_\infty)$-module $L(V_\bk)$ 
the {\it lowest weight module of vector weight $\rho_\bk$}.
This module is characterized so that it contains $V_\bk$ as a $\UU(\kk_\C \oplus \p^-)$-submodule 
with multiplicity one.
The infinitesimal character of $L(V_\bk)$ is given by
\[
(k_1-1,k_2-2, \dots, k_n-n, 0, -(k_n-n), \dots, -(k_2-2), -(k_1-1)).
\]
Note that $L(V_\bk)$ is 
discrete series if $k_n > n$.
\par

Let $\psi \in \Psi_2(\Sp_n/\R)$.
Suppose that 
\[
\psi_d(z) = \psi
\left(z, 
\begin{pmatrix}
(z/\overline{z})^{\half{1}} & 0 \\ 0 & (z/\overline{z})^{-\half{1}}
\end{pmatrix}
\right)
= \begin{pmatrix}
z^{\alpha_1}\overline{z}^{\beta_1}&&\\
&\ddots&\\
&&z^{\alpha_{2n+1}}\overline{z}^{\beta_{2n+1}}\\
\end{pmatrix}
\]
for some $\alpha_i, \beta_i \in \C$ such that $\alpha_i-\beta_i \in \Z$.
Here, for $\alpha, \beta \in \C$ with $\alpha-\beta \in \Z$, 
we write $z^{\alpha}\overline{z}^{\beta} = z^{\alpha-\beta}(z\overline{z})^{\beta}$.
In general, any representation $\pi \in \Pi_\psi$ has an infinitesimal character $(\alpha_1, \dots, \alpha_{2n+1})$.
We call $(\alpha_1, \dots, \alpha_{2n+1})$ the {\it infinitesimal character of $\psi$}.
In particular, if $\Pi_\psi$ contains a discrete series representation, then $\psi$ is Adams--Johnson
(see also \cite[Proposition 4.3, Th\'eor\`eme 4.4]{MR1}).
\par

We determine $A$-packets which contain $L(V_\bk)$.
First, we consider the case where $L(V_\bk)$ is discrete series, i.e., $k_1 \geq \dots \geq k_n > n$.

\begin{prop}\label{discrete}
Suppose that $\bk = (k_1, \dots, k_n) \in \Z^n$ with $k_1 \geq \dots \geq k_n > n$.
Then for $\psi \in \Psi(\Sp_n/\R)$, 
the $A$-packet $\Pi_\psi$ contains the lowest weight module $L(V_\bk)$
if and only if $\psi$ is of the form
\[
\psi = \left(\bigoplus_{i=1}^t \rho_{\alpha_i} \boxtimes S_{d_i}\right) \oplus \sgn^n, 
\]
where
\begin{itemize}
\item
$\sum_{i=1}^{t} d_i = n$; 
\item
$\alpha_i + d_i \equiv 1 \bmod 2$; 
\item $\alpha_1 > \dots > \alpha_t >0$ and 
\[
\bigcup_{i=1}^t 
\left\{ \half{\alpha_i+d_i-1}, \half{\alpha_i+d_i-3}, \dots, \half{\alpha_i-d_i+1} \right\}
= \{k_1-1,k_2-2, \dots, k_n-n\}.
\]
\end{itemize}
In this case, the character $\pair{\cdot, L(V_\bk)}_\psi$ of $A_\psi$ 
is determined by $\pair{z_\psi, L(V_\bk)}_\psi = 1$ and 
\[
\pair{\alpha_{\rho_{\alpha_i} \boxtimes S_{d_i}}, L(V_\bk)}_\psi = (-1)^{\half{d_i-\delta_i}}
\]
for $i = 1, \dots, t$, where $\delta_i$ is given in Theorem \ref{AJ}.
\end{prop}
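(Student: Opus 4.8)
The plan is to reduce the statement to the explicit description of Adams--Johnson packets in Theorem \ref{AJ}, using the fact recalled above that a local $A$-packet for $\Sp_n/\R$ containing a discrete series representation must come from an Adams--Johnson parameter (since $k_n>n$, the module $L(V_\bk)$ is a discrete series, with regular infinitesimal character $(k_1-1,\dots,k_n-n,0,-(k_n-n),\dots,-(k_1-1))$).

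\textbf{Necessity.} Assuming $L(V_\bk)\in\Pi_\psi$, I would write $\psi=\bigoplus_{i=1}^{r}\rho_{\alpha_i}\boxtimes S_{d_i}\oplus\sgn^\delta\boxtimes S_{d_0}$ and compute its infinitesimal character: the block $\rho_{\alpha_i}\boxtimes S_{d_i}$ contributes $\pm\lam_i$ (all entries positive because the Adams--Johnson inequalities force $\alpha_i>d_i$), and $\sgn^\delta\boxtimes S_{d_0}$ contributes $(\half{d_0-1},\dots,-\half{d_0-1})$. Since a cohomologically induced module is a discrete series only when its Levi is compact (for instance by comparing Gelfand--Kirillov dimensions, $\dim(\mathfrak{u}\cap\p)$ being maximal exactly when $\mathfrak{l}\cap\p=0$), the $\Sp_{d_0-1}(\R)$-factor of the Levi in Theorem \ref{AJ} must vanish, i.e. $d_0=1$; then $\delta\equiv\sum d_i\equiv n\bmod 2$, so $\psi=\bigoplus_{i=1}^{t}\rho_{\alpha_i}\boxtimes S_{d_i}\oplus\sgn^n$. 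Matching the positive exponents with those of $L(V_\bk)$ yields $\bigcup_i\{\half{\alpha_i+d_i-1},\dots,\half{\alpha_i-d_i+1}\}=\{k_1-1,\dots,k_n-n\}$, and since the right-hand side is a set the blocks on the left are pairwise disjoint, so the $\alpha_i$ are distinct and may be ordered $\alpha_1>\dots>\alpha_t>0$.

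\textbf{Sufficiency.} Conversely, given $\psi$ of the asserted form, I would first verify it is a genuine Adams--Johnson parameter: the only nonobvious conditions are $\alpha_i-\alpha_{i+1}\ge d_i+d_{i+1}$ and $\alpha_t\ge d_t+1$, and these follow from the disjointness of the blocks $\{\half{\alpha_i+d_i-1},\dots,\half{\alpha_i-d_i+1}\}$ (forced by multiplicity-freeness of their union, whose smallest element is $\ge 1$) together with the congruence $\alpha_i\equiv 1-d_i\bmod 2$, which upgrades a difference being $\ge d_i+d_{i+1}-1$ to $\ge d_i+d_{i+1}$. By Theorem \ref{AJ}, $\Pi_\psi=\{\pi_w\mid w\in\Sigma_\psi\}$ with $\Sigma_\psi\cong\prod_{i=1}^t\PP_2(d_i)$; I would take $w$ corresponding to $(p_i,q_i)=(d_i,0)$ for all $i$, so that $L_{\{(d_i,0)\}}(\R)\cong\U(d_1)\times\dots\times\U(d_t)$ is compact and $\pi_w$ is cohomologically induced from the holomorphic $\theta$-stable parabolic, the one with $\mathfrak{u}\cap\p=\p^+$. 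Then $\pi_w$ is a discrete series with infinitesimal character $\lam_\psi=(k_1-1,\dots,k_n-n)$, and its lowest $K$-type has highest weight $(\lam_\psi-\rho)+2\rho(\p^+)=\lam_\psi-\rho_c+\rho_n$, where $\rho_c=(\half{n-1},\dots,-\half{n-1})$ and $\rho_n=\half{n+1}(1,\dots,1)$; a direct computation shows the $k$-th coordinate of this weight equals $k_k$, so the lowest $K$-type is $\rho_\bk$ and therefore $\pi_w\cong L(V_\bk)$. Finally, substituting $(p_i,q_i)=(d_i,0)$ into the character formula of Theorem \ref{AJ} gives $\pair{\alpha_{\rho_{\alpha_i}\boxtimes S_{d_i}},\pi_w}_\psi=(-1)^{\half{d_i-\delta_i}}$ and $\pair{z_\psi,\pi_w}_\psi=1$, as claimed.

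\textbf{Main obstacle.} The delicate part is the sufficiency direction: one must pin down the correct $\theta$-stable parabolic so that $\mathfrak{u}\cap\p=\p^+$, carry out the lowest-$K$-type computation identifying $\pi_w$ with $L(V_\bk)$, and in particular check that this is the member of the packet attached to $(p_i,q_i)=(d_i,0)$ rather than $(0,d_i)$ — only the former produces the sign $(-1)^{\half{d_i-\delta_i}}$ appearing in the statement. The necessity direction also hinges on the (standard but not entirely trivial) fact that a cohomologically induced module can be a discrete series only when its Levi is compact, which is what forces $d_0=1$.
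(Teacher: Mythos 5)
Your proposal is correct and follows essentially the same route as the paper: reduce to Adams--Johnson packets via the discrete series observation, force $d_0=1$ by compactness of the Levi, match infinitesimal characters for the necessity direction, and identify $\pi_1=A_Q(\chi_{\lam_\psi})$ (i.e.\ the member attached to $(p_i,q_i)=(d_i,0)$) with $L(V_\bk)$ for sufficiency, then read off the character from Theorem \ref{AJ}. The only difference is that you have spelled out details the paper leaves implicit — the Gelfand--Kirillov dimension argument for compactness of the Levi, the parity argument upgrading $\ge d_i+d_{i+1}-1$ to $\ge d_i+d_{i+1}$, and the lowest $K$-type computation $\lam_\psi-\rho_c+\rho_n=\bk$ — all of which are accurate.
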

\begin{proof}
By the above remark, if $L(V_\bk) \in \Pi_\psi$, then $\psi$ is Adams--Johnson.
It is known that 
the cohomological induction $\pi_w = A_{w^{-1}Qw}(w^{-1}\chi_{\lam_\psi})$ is discrete series 
if and only if $w^{-1}Lw(\R)$ is compact.
It happens for some $w \in \Sigma_\psi$ 
only when $\sgn^n \subset \psi$, i.e., $d_0 = 1$ in the notation in Theorem \ref{AJ}.
By comparing the infinitesimal character, 
we see that if $L(V_\bk) \in \Pi_\psi$, then $\psi$ is of the form in the proposition.
\par

Suppose that $\psi$ is of the form in the proposition.
Then we have $\pi_1 = A_{Q}(\chi_{\lam_\psi}) \cong L(V_\bk)$.
The associated character $\pair{\cdot, L(V_\bk)}_\psi$ of $A_\psi$ is computed in Theorem \ref{AJ}.
\end{proof}
\par

When $\bk = (k,k,\dots,k)$, 
i.e., when $(\rho_{\bk}, V_\bk) = ({\det}^k, \C)$, 
we write $\DD_k^{(n)} = L(V_\bk)$
and call it the {\it lowest weight module of the scalar weight $k$}.
Some properties of $\psi \in \Psi(\Sp_n/\R)$ satisfying $\DD^{(n)}_k \in \Pi_\psi$
are established by M{\oe}glin--Renard \cite{MR} even when $\DD_k^{(n)}$ is not discrete series.

\begin{prop}\label{MR}
If $0 < k \leq n$, then the multiplicity of $\DD^{(n)}_k$ in $\Pi_\psi$ is at most one.
Moreover, for $k_1, k_2 > 0$ with $k_1 \not= k_2$, 
if $\DD^{(n)}_{k_1}$ and $\DD^{(n)}_{k_2}$ belong to the same $A$-packet $\Pi_\psi$, 
then $n$ is even and $\{k_1,k_2\} = \{\half{n}, \half{n}+1\}$.
\end{prop}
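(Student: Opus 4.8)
The plan is to combine the Adams--Johnson description of archimedean $A$-packets containing discrete series (which controls the ``generic'' situation) with M{\oe}glin--Renard's analysis of $A$-packets containing the non-tempered modules $\DD^{(n)}_k$ for $0<k\le n$, and then to track infinitesimal characters carefully. First I would record what the infinitesimal character of $\DD^{(n)}_k$ is: by the formula for $L(V_\bk)$ with $\bk=(k,\dots,k)$, it is $(k-1,k-2,\dots,k-n,0,-(k-n),\dots,-(k-1))$. For $\DD^{(n)}_k$ to lie in a packet $\Pi_\psi$, the infinitesimal character of $\psi$ must equal this (up to the Weyl group of $\SO_{2n+1}(\C)$, i.e.\ up to sign and permutation); in particular the multiset $\{|k-1|,|k-2|,\dots,|k-n|\}$ of absolute values of the nonzero entries is forced. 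When $0<k\le n$ this multiset has repetitions exactly when two of $k-1,\dots,k-n$ are negatives of each other, i.e.\ when $k-i=-(k-j)$ for some $i\ne j$, which happens iff $2k\in\{3,\dots,2n-1\}$; so the constraints on $\psi$ are quite rigid.

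For the first assertion (multiplicity $\le 1$), I would invoke Proposition~\ref{MR}'s source directly: this is precisely one of M{\oe}glin--Renard's results on packets containing $\DD^{(n)}_k$, so strictly speaking the statement is a citation. If one wants a self-contained argument, the point is that the local $A$-packet $\Pi_{\psi}$ for a real place is a union of Jordan--H{\"o}lder constituents of the induced representations $I_{\psi_v}(\pi_0)$ built from Adams--Johnson data, and $\DD^{(n)}_k$ can occur in at most one of these induced pieces (by infinitesimal character) and with multiplicity one inside it (M{\oe}glin's irreducibility/multiplicity-one statements, cf.\ Theorems~\ref{mult1},~\ref{AJ}). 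The honest route here is to cite \cite{MR}; I would not attempt to reprove it.

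For the second assertion, suppose $\DD^{(n)}_{k_1}$ and $\DD^{(n)}_{k_2}$ both lie in $\Pi_\psi$ with $k_1<k_2$. The two modules then have the same infinitesimal character, so the multisets of absolute values $\{|k_1-1|,\dots,|k_1-n|\}$ and $\{|k_2-1|,\dots,|k_2-n|\}$ coincide. The first is $\{0,1,\dots\}$-type: writing $k_1-i$ for $i=1,\dots,n$ gives the integers $k_1-1,k_1-2,\dots,k_1-n$, whose absolute values are $\{k_1-1,\dots,1,0\}\cup\{1,2,\dots,n-k_1\}$ when $1\le k_1\le n$ (and a strictly increasing run $k_1-1>k_1-2>\dots>k_1-n>0$ when $k_1>n$, which has no repetitions). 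So if $k_1>n$ the multiset is multiplicity-free, hence $\DD^{(n)}_{k_1}$ is a discrete series and $\psi$ is Adams--Johnson with $d_0=1$; then comparing infinitesimal characters forces $k_2=k_1$, contradiction. Thus $1\le k_1\le n$, and likewise the multiset equality forces $1\le k_2\le n$. Now equate the two multisets: from $k_1$ we get absolute values $\{0,1,\dots,\max(k_1-1,n-k_1)\}$ with the value $v$ appearing with multiplicity $2$ for $1\le v\le\min(k_1-1,n-k_1)$ and multiplicity $1$ otherwise (value $0$ always multiplicity $1$); similarly for $k_2$. Matching the \emph{supports} gives $\max(k_1-1,n-k_1)=\max(k_2-1,n-k_2)$, and since $k_1<k_2$ this pins $k_1,k_2$ symmetric about the midpoint of $\{1,\dots,n\}$ only if $n$ is even, and then matching \emph{multiplicities} (the total number of repeated values must agree) forces $k_1,k_2$ to be as close to the midpoint as possible, i.e.\ $\{k_1,k_2\}=\{\tfrac n2,\tfrac n2+1\}$. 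One checks this pair does satisfy the infinitesimal-character constraint: for $k=\tfrac n2$ one gets $\{|k-i|\}=\{0,1,\dots,\tfrac n2-1,1,2,\dots,\tfrac n2\}$ and for $k=\tfrac n2+1$ one gets $\{0,1,\dots,\tfrac n2,1,\dots,\tfrac n2-1\}$, the same multiset.

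The main obstacle I anticipate is not the infinitesimal-character bookkeeping, which is elementary, but rather justifying that $\DD^{(n)}_{k_1}$ and $\DD^{(n)}_{k_2}$ can genuinely \emph{coexist} in one packet only in the claimed case and that they \emph{do} coexist there --- i.e.\ that no finer invariant of the $A$-packet (the map to $\widehat{A_\psi}$, or the precise Jordan--H{\"o}lder structure of the relevant induced representation) obstructs it or splits it. This is exactly the content that M{\oe}glin--Renard supply in \cite{MR}, so in practice the proof of Proposition~\ref{MR} is: reduce to the infinitesimal-character constraint as above, and then cite \cite{MR} for the statement that within the constrained family the multiplicity and coexistence behavior is exactly as asserted. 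I would write the proof in that form, spelling out the infinitesimal-character computation in detail and attributing the structural input to \cite{MR}.
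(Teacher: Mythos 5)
Your strategy matches the paper's in broad outline: record the infinitesimal character of $\DD^{(n)}_k$, use regularity to dispose of the $k>n$ case (so one may assume $0<k_1,k_2\le n$), and then invoke M{\oe}glin--Renard. The first assertion is, as you say, a direct citation to \cite[Th\'eor\`eme~6.1]{MR}, and your reduction step for $k>n$ is exactly the one the paper uses.

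However, your detailed infinitesimal-character calculation for $1\le k_1<k_2\le n$ contains a genuine error, and as a result you overstate what the bookkeeping delivers. For $1\le k\le n$, the multiset of absolute values of $k-1,k-2,\dots,k-n$ is
\[
\{0\}\cup\{1,\dots,k-1\}\cup\{1,\dots,n-k\},
\]
and this multiset is \emph{invariant under $k\mapsto n+1-k$}: the two blocks simply swap. So equality of infinitesimal characters forces only $k_1+k_2=n+1$, nothing more. Your ``matching supports'' step does correctly give $k_1+k_2=n+1$, but this is possible for all $n$, not ``only if $n$ is even''; and your ``matching multiplicities'' step gives no further constraint, because once $k_1+k_2=n+1$ holds the multiplicity of every value already agrees automatically (indeed the two multisets are identical, e.g.\ $n=4$, $\{k_1,k_2\}=\{1,4\}$ yields $\{0,1,2,3\}$ on both sides, exactly as $\{2,3\}$ yields $\{0,1,1,2\}$ on both sides). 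Thus the conclusion that $n$ is even and $\{k_1,k_2\}=\{\tfrac n2,\tfrac n2+1\}$ is \emph{not} a consequence of the infinitesimal-character constraint, and your argument does not prove it. This sharper conclusion requires the full structural content of \cite[Th\'eor\`eme~6.2]{MR}, which must rule out the other pairs $\{k,n+1-k\}$ that are infinitesimally indistinguishable; the paper cites that theorem for precisely this reason rather than attempting a computation. Your closing paragraph correctly anticipates that some deeper input is needed, but the middle of the proof presents the infinitesimal-character computation as if it already pins down the pair, which it does not.
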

\begin{proof}
The first assertion is a part of \cite[Th{\'e}or{\`e}me 6.1]{MR}.
If $\DD^{(n)}_k \in \Pi_\psi$, 
then the infinitesimal character of $\DD^{(n)}_k$ is determined by the $A$-parameter $\psi$.
If $k > n$, since the infinitesimal character of $\DD^{(n)}_k$ is regular, 
$\psi$ determines $\DD^{(n)}_k$ uniquely.
Hence we may assume that $0 < k_1, k_2 \leq n$.
In this case, the last assertion follows from \cite[Th{\'e}or{\`e}me 6.2]{MR}.
\end{proof}

\subsection{Automorphic representations generated by Siegel modular forms}
In this subsection, we let $\F = \Q$.
We prove some lemmas.

\begin{lem}\label{irred}
Let $F \in S_{\rho_\bk}(\Sp_n(\Z))$ be a Hecke eigenform with 
Satake parameter $c_p(F) \in (\C^\times)^n/\mathfrak{S_n} \ltimes \{\pm1\}^n$ at $p$.
Then the cuspidal automorphic representation $\pi_F \subset \AA^2(\Sp_n(\A))$ 
generated by the cusp form $\varphi_{F,v}$ for $v \in V_{\bk}$ is irreducible.
Moreover, its local factors $\pi_{F,p}$ and $\pi_{F,\infty}$ are given as follows:
\begin{itemize}
\item
$\pi_{F,p}$ is unramified with Satake parameter $c(\pi_{F,p}) = c_p(F)$ for each (finite) prime $p$; 
\item
$\pi_{F, \infty}$ is the lowest weight module $L(V_\bk)$.
\end{itemize}
\end{lem}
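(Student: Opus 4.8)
The plan is to establish the three assertions --- irreducibility of $\pi_F$, the identification of $\pi_{F,p}$, and the identification of $\pi_{F,\infty}$ --- by first analyzing the $(\g_\infty,K_\infty)\times\Sp_n(\A_\fin)$-module structure of the space of holomorphic cusp forms of vector weight $\rho_\bk$ generated by $\varphi_{F,v}$, then invoking Arthur's multiplicity formula (Theorem \ref{AMF}) and Hecke theory. First I would observe that the finite-level condition forces $\pi_F$ to be unramified at every finite place: since $\varphi_{F,v}$ is right $K_\fin$-invariant, every irreducible constituent $\sigma = \otimes'_v\sigma_v$ of $\pi_F$ has $\sigma_p$ with a nonzero $\Sp_n(\oo_p)$-fixed vector for all $p$, hence $\sigma_p$ is the unramified constituent determined by a Satake parameter. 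Since $F$ is a Hecke eigenform, the Hecke algebra $\bigotimes_p \mathcal{H}(\Sp_n(\Q_p)/\!/\Sp_n(\oo_p))$ acts on the line $\C\cdot\varphi_{F,v}$ (for the spherical part) through the character given by $c_p(F)$, so the unramified constituents $\sigma_p$ are all forced to have Satake parameter $c_p(F)$, i.e. $\sigma_p \cong \pi_{F,p}$ is uniquely pinned down for every $p$.

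Next I would pin down the archimedean component. The conditions (1)--(3) satisfied by $\Phi_F$ --- the $K_\infty$-equivariance under $\overline{\rho_\bk}$, the annihilation $\p^-\Phi_F = 0$, and cuspidality --- translate into the statement that the $(\g_\infty,K_\infty)$-module generated by the functions $\varphi_{F,v}$, $v\in V_\bk$, is a quotient of the generalized Verma module $\MM(V_\bk)$: indeed $\p^-$ kills the lowest $K_\infty$-type $V_\bk$ and the module is generated over $\UU(\g_\infty)$ by that $K_\infty$-type, so by the universal property it is a quotient of $\MM(V_\bk) = \UU(\g_\infty)\otimes_{\UU(\kk_\C\oplus\p^-)}V_\bk$. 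Since $\varphi_{F,v}$ is square-integrable (being a cusp form), the archimedean component of any constituent is a \emph{unitary} $(\g_\infty,K_\infty)$-module that is a quotient of $\MM(V_\bk)$; the only such is the unique irreducible quotient $L(V_\bk)$. Hence every irreducible constituent $\sigma$ of $\pi_F$ has $\sigma_\infty \cong L(V_\bk)$ and $\sigma_p \cong \pi_{F,p}$ for all $p$, so all constituents are isomorphic as abstract $\Sp_n(\A_\fin)\times(\g_\infty,K_\infty)$-modules.

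To upgrade this to genuine irreducibility of $\pi_F$ as a subspace of $\AA^2(\Sp_n(\A))$ I would apply Arthur's multiplicity formula: the abstract isomorphism class $\sigma = \pi_{F,\infty}\otimes\bigotimes'_p\pi_{F,p}$ appears in $\AA^2(\Sp_n(\A))$, hence lies in some global $A$-packet $\Pi_\psi$ with multiplicity $m_{\sigma,\psi}\le 1$ (by Theorem \ref{AMF}, since the multiplicity formula gives either $0$ or $1$). Therefore the $\sigma$-isotypic subspace of $\AA^2(\Sp_n(\A))$ is already irreducible, and since $\pi_F$ is contained in it and is nonzero, $\pi_F$ equals it and is irreducible. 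Combining with the previous two paragraphs gives the three stated conclusions. The main obstacle is the archimedean step: carefully justifying that the $(\g_\infty,K_\infty)$-module generated by the $\varphi_{F,v}$ is \emph{exactly} captured by the generalized Verma module picture --- i.e. that conditions (1)--(3) on $\Phi_F$ really do say the generating $K_\infty$-type is $V_\bk$ with $\p^-$ acting by zero and nothing larger intervenes --- and that unitarity then forces the quotient to be the irreducible $L(V_\bk)$ rather than some larger unitary quotient; this is where one must be attentive, though it is essentially the standard translation between holomorphic forms and lowest-weight modules (cf. the argument of \cite[\S 4]{AS}). The finite-place and multiplicity-formula steps are then routine given the results already recalled.
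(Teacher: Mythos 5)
Your finite-place and archimedean analyses essentially match the paper's proof (which follows \cite[Theorem 4.3]{AK}): any irreducible constituent $\sigma = \otimes'_v \sigma_v$ of $\pi_F$ must have $\sigma_p$ unramified with Satake parameter $c_p(F)$ for every $p$, and $\sigma_\infty$ is an irreducible quotient of $\MM(V_\bk)$, hence isomorphic to $L(V_\bk)$. So far so good, and this shows $\pi_F$ is $\sigma$-isotypic.

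The gap is in your final irreducibility step. You deduce from Theorem~\ref{AMF} that the $\sigma$-isotypic subspace of $\AA^2(\Sp_n(\A))$ is already irreducible because $m_{\sigma,\psi}\le 1$. This inference is not valid: Arthur's formula assigns a multiplicity in $\{0,1\}$ to each element of the \emph{multiset} $\Pi_\psi$, and the same abstract isomorphism class $\sigma$ can occur as several distinct elements of that multiset, each contributing $1$ to the multiplicity of $\sigma$ in $\AA^2(\Sp_n(\A))$. The paper flags exactly this point right after Theorem~\ref{AMF}: ``Arthur's multiplicity formula does not tell us whether $\AA^2(\Sp_n(\A))$ is multiplicity-free or not since the $A$-packets are multisets.'' Ruling out repeated occurrences requires the local multiplicity-one inputs that are only assembled later in Theorem~\ref{AMF-S} (Theorem~\ref{mult1}, Propositions~\ref{44}, \ref{51}, \ref{MR}, Theorem~\ref{AJ}), and even then those inputs are available only under hypotheses ($k_n > n$, or scalar weight) that the lemma as stated does not impose.

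The paper closes the argument elementarily and in full generality, without the multiplicity formula: once $\pi_F \cong \sigma \otimes \C^r$ is known to be isotypic, the $K_\fin$-invariants $\pi_F^{K_\fin} \cong L(V_\bk) \otimes \C^r$ are generated as a $(\g_\infty, K_\infty)$-module by the span of the $\varphi_{F,v}$, which is a single irreducible $K_\infty$-submodule isomorphic to $V_\bk$. Since $V_\bk$ occurs with multiplicity one as the lowest $K_\infty$-type of $L(V_\bk)$, that copy lies in $V_\bk \otimes \C w$ for some $w \in \C^r$ and generates only $L(V_\bk) \otimes \C w$; equality with $\pi_F^{K_\fin}$ then forces $r = 1$. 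You should replace your multiplicity-formula argument with this direct one.
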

\begin{proof}
The proof is the same as \cite[Theorem 4.3]{AK}.
\par

Since $\pi_{F}$ is cuspidal, it is a direct sum of irreducible automorphic representations.
Fix an isomorphism $\pi_F \cong \pi_1 \oplus \dots \oplus \pi_r$ with $\pi_i \cong \otimes_v' \pi_{i,v}$ irreducible.
Let $\varphi_{i,v}$ be the image of $\varphi_{F,v} \in \pi_F$ for $v \in V_{\bk}$ 
under the projection $\pi_F \cong \oplus_{j=1}^r \pi_j \twoheadrightarrow \pi_j$.
Since $\pi_F$ is generated by $\{\varphi_{F,v}|v \in V_{\bk}\}$, 
for each $i = 1, \dots, r$, some $\varphi_{i,v}$ is nonzero.
By considering the action of the spherical Hecke algebra $\mathcal{H}(\Sp_n(\Q_p), \Sp_n(\Z_p))$ 
on $\varphi_{i,v}$, 
we see that $\pi_{i,p}$ is unramified with Satake parameter $c(\pi_{i,p}) = c_p(F)$ for each prime $p$.
Moreover, since $\kappa \cdot \varphi_{F,v} = \varphi_{F, \kappa v}$ for $\kappa \in K_\infty$, 
for each $i = 1, \dots, r$, we have $\kappa \cdot \varphi_{i,v} = \varphi_{i, \kappa v}$ for $\kappa \in K_\infty$.
This means that there is a nonzero $K_\infty$-intertwining operator 
$V_\bk \hookrightarrow \pi_{i,\infty} \cong \pi_{i}^{K_\fin}$.
Hence $\pi_{i,\infty} \cong L(V_{\bk})$ since $\pi_{i,\infty}$ is irredcuible.
\par

We conclude that $\pi_F$ is isotypic, i.e., $\pi_F \cong \pi \otimes \C^r$ 
for some irreducible automorphic representation $\pi$.
However, since $\pi_F^{K_\fin} \cong \pi^{K_\fin} \otimes \C^r$ 
is generated by $\{\varphi_{F,v}|v \in V_{\bk}\}$ as $(\g_\infty, K_\infty)$-module, 
it must be irreducible.
Hence we have $r=1$ so that $\pi_F$ is irreducible.
\end{proof}
\par

Now we consider the elliptic cusp forms.
Let $f \in S_{2k}(\SL_2(\Z))$ be a Hecke eigenform.
Consider the irreducible unitary cuspidal automorphic representation $\tau_f$ of $\GL_2(\A)$
generated by $\varphi_f$, 
where $\varphi_f$ is a cusp form on $\GL_2(\A)$ defined by
\[
\varphi_f(\gamma g_\infty \kappa_\fin) = \det(g_\infty)^{k} f(g_\infty \pair{\I}) j(g_\infty, \I)^{-2k}
\]
for $\gamma \in \GL_2(\Q)$, $g_\infty \in \GL_2(\R)$ with $\det(g_\infty) > 0$, 
and $\kappa_\fin \in \GL_2(\widehat{\Z})$.
Note that $\tau_f$ is symplectic, i.e., $L(s,\tau_f,\wedge^2)$ has a pole at $s=1$
since the central character of $\tau_f$ is trivial (see \cite[Corollary 7.5]{KR}).
Moreover, the representation of $W_\R$ corresponding to $\tau_{f,\infty}$ is $\rho_{2k-1}$.
\par

In the rest of this subsection, 
we consider the Siegel modular forms of degree $2$.
When $\bk = (k+j, k)$ with $j \geq 0$, 
we have $\rho_{\bk} = {\det}^k \Sym(j)$, whose dimension is $j+1$.
In this case, we write $S_{k,j}(\Sp_2(\Z)) = S_{\rho_{(k+j,k)}}(\Sp_2(\Z))$, 
and call $f \in S_{k,j}(\Sp_2(\Z))$ 
a {\it Siegel modular form of degree $2$ and of weight ${\det}^k \Sym(j)$}.
Note that $S_{k,j}(\Sp_2(\Z)) = 0$ unless $j \equiv 0 \bmod 2$.
We assume that $j$ is even in the rest of this subsection.
For a Hecke eigenform $f \in S_{k,j}(\Sp_2(\Z))$, 
we can define a degree four $L$-function 
\[
L(s,f,\spin) = \prod_p L_p(s,f,\spin)
\]
which is called the {\it spinor $L$-function associated to $f$}.
\par

We would prove the following lemma.
\begin{lem}\label{nonSK}
Assume $k \geq 4$ and $j \geq 1$. 
Let $f \in S_{k,j}(\Sp_2(\Z))$ be a Hecke eigenform.
Then there is an irreducible unitary cuspidal symplectic automorphic representation 
$\tau_f = \otimes_v' \tau_{f,v}$ of $\GL_4(\A)$ 
such that 
\[
L(s, \tau_{f,p}) = L_p(s, f, \spin)
\]
for any prime $p < \infty$, 
and the $L$-parameter of $\tau_{f,\infty}$ is $\rho_{j+2k-3} \oplus \rho_{j+1}$.
\end{lem}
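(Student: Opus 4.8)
The plan is to realise $f$ adelically on $\Sp_2(\A)$, to read off its global $A$-parameter from Arthur's multiplicity formula, to show that this parameter must be of general type $\tau_5[1]$ with $\tau_5$ cuspidal on $\GL_5(\A)$, and then to pass from the degree-five standard transfer $\tau_5$ to the degree-four spinor transfer $\tau_f$ on $\GL_4(\A)$ via the known transfer of Siegel cusp forms of degree two. First I would apply Lemma~\ref{irred} with $n=2$ and $\bk=(k+j,k)$ (so that $\rho_\bk={\det}^k\Sym(j)$): $f$ generates an irreducible cuspidal $\pi_f\subset\AA^2(\Sp_2(\A))$ whose finite components $\pi_{f,p}$ are unramified with Satake parameter $c_p(f)$, and whose archimedean component is $\pi_{f,\infty}=L(V_{(k+j,k)})$; since $k\ge4>2=n$, this is a discrete series, of infinitesimal character $(k+j-1,\,k-2,\,0,\,-(k-2),\,-(k+j-1))$. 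By Theorem~\ref{AMF} there is a unique $\psi=\boxplus_{i=1}^r\tau_i[d_i]\in\Psi_2(\Sp_2/\Q)$ with $\pi_f\in\Pi_\psi$ and $m_{\pi_f,\psi}=1$; here $\sum_i m_id_i=2n+1=5$ and each $\tau_i$ is cuspidal self-dual on $\GL_{m_i}(\A)$. Since $\pi_{f,p}$ is unramified for every finite $p$, the parameter $\psi$ is everywhere unramified, so each $\tau_i$ is everywhere unramified (``level one''); and since $\pi_{f,\infty}$ is a discrete series, $\psi_\infty$ is Adams--Johnson, so by Proposition~\ref{discrete} it has the shape $\rho_{2(k+j-1)}\boxtimes S_1\oplus\rho_{2(k-2)}\boxtimes S_1\oplus\1$ (the alternative shape $\rho_\alpha\boxtimes S_2\oplus\1$ would force $j=0$).

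Next I would pin down $\psi$. Matching infinitesimal characters together with the fact that there is no nontrivial everywhere-unramified cuspidal self-dual \emph{orthogonal} representation of $\GL_1(\A)$ or $\GL_2(\A)$ over $\Q$ (there is no quadratic field unramified at all finite places, and a level-one holomorphic elliptic eigenform has symplectic, hence odd-weight, archimedean parameter), a short case check on the decomposition type leaves only $\psi=\tau_5[1]$ with $\tau_5$ cuspidal self-dual orthogonal on $\GL_5(\A)$, and the Yoshida type $\psi=\1[1]\boxplus(\sigma_1\boxtimes\sigma_2)[1]$ with $\sigma_1,\sigma_2$ level-one holomorphic eigenforms and $\sigma_1\boxtimes\sigma_2$ their tensor product (Ramakrishnan) lift; the remaining types (Saito--Kurokawa $\sigma[2]\boxplus\1[1]$, or those built from $\1[d]$ with $d\ge3$) are already excluded at the archimedean place by $j\ge1$ and $k\ge4$. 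The Yoshida type I would rule out by the multiplicity formula itself: there $A_\psi\cong\Z/2\Z$, Arthur's character $\ep_\psi$ is trivial (all $d_i=1$), whereas the localization at $\infty$ of the generator of $A_\psi$ is $\alpha_{\rho_{2(k+j-1)}\boxtimes S_1}+\alpha_{\rho_{2(k-2)}\boxtimes S_1}$, on which the character $\pair{\cdot,L(V_{(k+j,k)})}_{\psi_\infty}$ supplied by Proposition~\ref{discrete} takes the value $(+1)(-1)=-1$; as the finite places contribute trivially, $m_{\pi_f,\psi}=0$ for every such $\psi$ (this is the non-existence of level-one holomorphic Yoshida lifts). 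Hence $\psi=\tau_5[1]$, $\tau_{5,\infty}$ has $L$-parameter $\rho_{2(k+j-1)}\oplus\rho_{2(k-2)}\oplus\1$, and $L(s,\tau_{5,p})=L_p(s,f,\std)$ at every finite $p$.

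It remains to replace the five-dimensional $\tau_5$ by a four-dimensional representation. Since the exterior square of the spin representation of $\mathrm{Sp}_4(\C)=\mathrm{Spin}_5(\C)$ is $\1$ plus the standard representation of $\SO_5(\C)$, the target is a cuspidal \emph{symplectic} $\tau_f=\otimes_v'\tau_{f,v}$ on $\GL_4(\A)$ with $\wedge^2\tau_f\cong\1\boxplus\tau_5$ and $L(s,\tau_{f,p})=L_p(s,f,\spin)$ at every finite $p$, i.e.\ the spinor lift of the $\mathrm{GSp}_4(\A)$-representation underlying the general-type form $f$. For $f$ of general type this is exactly the known transfer of degree-two Siegel cusp forms (Weissauer; Pitale--Saha--Schmidt; equivalently the endoscopic classification for $\mathrm{GSp}_4$); $\tau_f$ is symplectic because $\wedge^2\tau_f$ contains the trivial representation, and a direct computation of the archimedean spin $L$-factor of the holomorphic discrete series of $\mathrm{GSp}_4(\R)$ of weight ${\det}^k\Sym(j)$ ($k\ge3$) gives that $\tau_{f,\infty}$ has $L$-parameter $\rho_{j+2k-3}\oplus\rho_{j+1}$, compatibly with the Hodge numbers imposed through $\wedge^2$ by $\tau_{5,\infty}$. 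The hard part is this last step: the symplectic endoscopic classification used elsewhere in the paper yields only the standard transfer $\tau_5$, and passing to the spinor transfer $\tau_f$ is a genuine instance of functoriality (the spin lifting from $\SO_5$ to $\mathrm{Sp}_4$, dual to $\mathrm{Sp}_4\hookrightarrow\SO_5$), so one must either invoke the separate $\mathrm{GSp}_4$-transfer theorem or construct $\tau_f$ through the theta correspondence between $\mathrm{GSp}_4$ and $\mathrm{GSO}_{3,3}\cong\mathrm{PGL}_4$ and establish its non-vanishing and cuspidality by the Rallis tower property; a secondary point requiring care is the $\ep$-factor computation that excludes level-one holomorphic Yoshida lifts.
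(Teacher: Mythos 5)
Your opening moves are the right ones (adelize $f$, identify $\pi_{f,\infty}$ as the holomorphic discrete series of regular infinitesimal character, use the multiplicity formula to pin down the global $A$-parameter, rule out Saito--Kurokawa by $j\ge 1$ and Yoshida by the archimedean sign $(+1)(-1)=-1$), but you have taken the wrong group, and this creates a genuine gap at the end that the paper's proof does not have. You apply Arthur's classification to $\Sp_2(\A)$, whose dual group is $\SO_5(\C)\subset\GL_5(\C)$; what comes out is the degree-five \emph{standard} transfer $\tau_5$, not the degree-four spinor transfer the lemma asserts. You acknowledge this yourself at the end: passing from $\tau_5$ to $\tau_f$ is a separate instance of functoriality (the spin lift $\SO_5\to\Sp_4$), and you propose to import it from Weissauer or Pitale--Saha--Schmidt, or to build it by theta from $\mathrm{GSp}_4$ to $\mathrm{GSO}_{3,3}$. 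That step is not carried out, and it is precisely the content of the lemma; as written, your argument only proves the analogous statement with $\std$ in place of $\spin$ and $\GL_5$ in place of $\GL_4$.

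The paper avoids the problem entirely by applying Arthur's multiplicity formula not to $\Sp_2$ but to the split group $\SO(3,2)$, which is isomorphic to $\mathrm{PGSp}_2$ via the accidental isomorphism. The dual group of $\SO(3,2)$ is $\Sp_4(\C)\subset\GL_4(\C)$, so the global $A$-parameter produced by Arthur's classification is already a $4$-dimensional symplectic object whose unramified local factors are the spin $L$-factors $L_p(s,f,\spin)$ and whose archimedean piece is $\rho_{j+2k-3}\oplus\rho_{j+1}$. One then runs the same elimination you sketch, but on the shorter list of discrete $A$-parameters of $\SO(3,2)$ (the paper's cases (1)--(6)): $k\ge4$ kills $\tau[2]$ with $\tau$ orthogonal on $\GL_2$, $j\ge1$ kills the Saito--Kurokawa type $\chi[2]\boxplus\tau[1]$, everywhere-unramifiedness forces $\chi=\1$ and kills cases (1)--(2), and the multiplicity formula kills the Yoshida case $\tau_1[1]\boxplus\tau_2[1]$ since $\ep_\psi$ is trivial while the non-generic archimedean member carries a nontrivial character. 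What remains is case (6), $\psi_f=\tau_f[1]$, and $\tau_f$ is by construction the cuspidal symplectic representation of $\GL_4(\A)$ with the stated properties. No independent ``spin transfer theorem'' is needed. If you want to keep your $\Sp_2$ framework for the multiplicity-formula elimination, that part is fine (and parallel to the paper's), but you then still need the $\SO(3,2)\cong\mathrm{PGSp}_2$ device to manufacture $\tau_f$; invoking the $\mathrm{GSp}_4$ transfer as a black box is not in the spirit of the paper and imports a much larger apparatus than necessary.
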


To prove this lemma, 
we use Arthur's multiplicity formula for $\SO(3,2)$
via an accidental isomorphism $\mathrm{PGSp}_2 \cong \SO(3,2)$, 
where $\SO(3,2)$ is the split special orthogonal group of type $B_2$ and of discriminant $1$.
We omit the detail for Arthur's multiplicity formula for $\SO(3,2)$.
See Arthur's book \cite{Ar}.
\par

For $v \in V_{(k+j,k)}$, one can define a cusp form $\varphi_{f,v}$
on $\mathrm{PGSp}_2(\Q) \bs \mathrm{PGSp}_2(\A)$ by a similar way to the elliptic modular forms case.
We regard $\varphi_{f,v}$ as a cusp form on $\SO(3,2)(\Q) \bs \SO(3,2)(\A)$
via an accidental isomorphism $\mathrm{PGSp}_2 \cong \SO(3,2)$.
By a similar argument to the proof of Lemma \ref{irred}, 
we see that $\varphi_{f,v}$ for $v \in V_{(k+j,k)}$ 
generates an irreducible cuspidal automorphic representation $\sigma_f$ of $\SO(3,2)(\A)$.
One can associate a global $A$-parameter $\psi_f \in \Psi_2(\SO(3,2)/\Q)$ to $\sigma_f$.
Note that $\sigma_{f,\infty}$ is a discrete series representation, 
and its $L$-parameter is $\rho_{j+2k-3} \oplus \rho_{j+1}$.
Comparing the infinitesimal character, we see that 
\[
(\psi_{f,\infty})_d \cong \chi_{j+2k-3} \oplus \chi_{j+1} \oplus \chi_{-(j+1)} \oplus \chi_{-(j+2k-3)}. 
\]
\par

An $A$-parameter $\psi \in \Psi_2(\SO(3,2)/\Q)$ is one of the following forms:
\begin{enumerate}
\item
$\psi = \chi[4]$, where $\chi$ is a quadratic character of $\A^\times/\Q^\times$; 
\item
$\psi = \chi_1[2] \boxplus \chi_2[2]$, 
where $\chi_1$, $\chi_2$ are quadratic characters of $\A^\times/\Q^\times$; 
\item
$\psi = \tau[2]$, where $\tau$ is an irreducible cuspidal orthogonal representation of $\GL_2(\A)$; 
\item
$\psi = \chi[2] \boxplus \tau[1]$, 
where $\chi$ is a quadratic character of $\A^\times/\Q^\times$
and $\tau$ is an irreducible cuspidal symplectic representation of $\GL_2(\A)$; 
\item
$\psi = \tau_1[1] \boxplus \tau_2[1]$, 
where $\tau_1$, $\tau_2$ are irreducible cuspidal symplectic representations of $\GL_2(\A)$; 
\item
$\psi = \tau[1]$, 
where $\tau$ is an irreducible unitary cuspidal symplectic automorphic representation of $\GL_4(\A)$.
\end{enumerate}
\par

When $\psi = \psi_f$, 
the representations $\chi$ or $\tau$ appearing in $\psi$ should be unramified everywhere.
In particular, $\chi$ must be the trivial character of $\A^\times/\Q^\times$
since $\A^\times/\Q^\times \widehat{\Z}^\times \R^\times_{>0} = \{1\}$.
When $\psi = \1[4]$, 
the global $A$-packet $\Pi_\psi$ consists of the trivial representation of $\SO(3,2)(\A)$.
Since $f \in S_{k,j}(\Sp_2(\Z))$ is not a constant function, $\psi_f \not\cong \1[4]$.
Hence the case (1) cannot occur.
The case (2) is impossible since $\1[2] \boxplus \1[2]$ is not a discrete $A$-parameter. 
\par

Now consider $\psi = \tau[2]$ is in the case (3).
Then 
$(\psi_{\infty})_d \cong \chi_{\alpha+1} \oplus \chi_{\alpha-1} \oplus \chi_{-\alpha+1} \oplus \chi_{-\alpha-1}$
for some $\alpha \in \Z$ with $\alpha \geq 0$.
If $\psi_f = \psi$, we have $\alpha+1 = j+2k-3$ and $\alpha-1 = j+1$.
This implies that $k=3$, which contradicts to our assumption that $k \geq 4$.
\par

Next, we consider $\psi = \1[2] \boxplus \tau[1]$ as in the case $(4)$.
Then 
$(\psi_{\infty})_d \cong \chi_{\alpha} \oplus \chi_{1} \oplus \chi_{-1} \oplus \chi_{-\alpha}$
for some $\alpha \in \Z$ with $\alpha \geq 0$.
If $\psi_f = \psi$, we have $j=0$ an $\alpha = 2k-3$.
This contradicts to our assumption that $j \geq 1$.
Remark that this is compatible that there is no Saito--Kurokawa lifting of vector weight.
\par

Suppose that $\psi = \tau_1[1] \boxplus \tau_2[1]$ as in the case $(5)$
such that $\psi_{\infty} = \rho_{j+2k-3} \oplus \rho_{j+1}$.
Consider the representation $\sigma = \otimes_v' \sigma_v \in \Pi_\psi$ 
such that $\sigma_p$ is unramified at any $p < \infty$, 
and $\sigma_\infty \cong \sigma_{f,\infty}$.
Since $\sigma_{f,\infty}$ is not generic, 
the character $\pair{\cdot, \sigma_{\infty}}_{\psi_{\infty}}$ is not trivial.
However, the Arthur's character $\ep_{\psi}$ must be trivial.
This implies that $\sigma$ is not automorphic by Arthur's multiplicity formula for $\SO(3,2)$.
Remark that this is compatible that there is no Yoshida lifting of level one.
\par

In conclusion, 
if $k \geq 4$ and $j \geq 1$, 
the $A$-parameter $\psi_f \in \Psi_2(\SO(3,2)/\Q)$ is $\psi_f = \tau_f[4]$ as in the case (6).
By the compatibility of Satake parameters, we see that
\[
L(s, \tau_{f,p}) = L_p(s, f, \spin)
\]
for any $p < \infty$.
This completes the proof of Lemma \ref{nonSK}.

\subsection{Proof of Lifting Theorem}\label{sec.LT}
Now we prove Lifting Theorem (Theorem \ref{LT}).
Let $\F = \Q$.
\par

For fixed $\bk = (k_1, \dots, k_n) \in \Z^n$ with $k_1 \geq \dots \geq k_n > n$, 
we consider a Hecke eigenform $g \in S_{\rho_\bk}(\Sp_n(\Z))$.
Let $\psi_g = \boxplus_{i=1}^r \tau_i[d_i] \in \Psi_2(\Sp_n/\Q)$ 
be the $A$-parameter associated to $g$, 
i.e., $\pi_g \in \Pi_{\psi_g}$, 
where $\pi_{g}$ is the irreducible cuspidal automorphic representation
generated by the cusp form $\varphi_{g,v}$ for $v \in V_{\bk}$.
Since $\pi_{g, \fin}$ is unramified everywhere, 
and $\pi_{g,\infty} \cong L(V_\bk)$, 
by Arthur's multiplicity formula (Theorem \ref{AMF}), we have
\[
\ep_{\psi_g}(\alpha_{\tau_i[d_i]})
= \pair{\Delta(\alpha_{\tau_i[d_i]}), \pi_g}_{\psi_{g}} 
= \pair{\alpha_{\phi_{i, \infty} \boxtimes S_{d_i}}, L(V_\bk)}_{\psi_{g,\infty}}
\]
for $i=1, \dots, r$, where $\phi_{i,\infty}$ is the representation of $W_\R$
corresponding to $\tau_{i,\infty}$.
Moreover, since $L(V_\bk)$ is discrete series, by Proposition \ref{discrete}, 
the localization $\psi_{g,\infty}$ at $\infty$ is Adams--Johnson and is of the form
\[
\psi_{g,\infty} = \left( \bigoplus_{l=1}^{t} \rho_{\alpha_l} \boxtimes S_{d_j} \right) \oplus \sgn^n, 
\]
where $\alpha_1 > \dots > \alpha_t > 0$ and 
\begin{align*}
&\bigcup_{l=1}^t \left\{ \half{\alpha_l+d_l-1}, \half{\alpha_l+d_l-3}, \dots, \half{\alpha_l-d_l+1} \right\}
= \left\{ k_1-1, k_2-2, \dots, k_n-n \right\}.
\end{align*}
In particular, there is a unique index $i_0$ such that 
\[
\phi_{i_0,\infty} \cong \left(\bigoplus_{l=1}^{m_{i_0}} \rho_{\alpha_{i_0,l}}\right) \oplus \sgn^n 
\]
with even $\alpha_{i_0,l}$ (and $d_{i_0} = 1$), 
and for $i \not= i_0$, 
\begin{align*}
\phi_{i,\infty} \cong  \bigoplus_{l=1}^{m_i} \rho_{\alpha_{i,l}}
\end{align*}
with $\alpha_{i,l} \not\equiv d_i \bmod 2$.
Since $\det(\phi_{i,\infty})(-1) = \omega_{\tau_i}(-1) = 1$, 
where $\omega_{\tau_i}$ is the central character of $\tau_i$, 
we have $(-1)^{m_{i_0}+n} = 1$ and $(-1)^{m_id_i} = 1$ for $i \not= i_0$.
Moreover, by Proposition \ref{discrete}, for $i \not= i_0$, we have
$\pair{\alpha_{\phi_{i, \infty} \boxtimes S_{d_i}}, L(V_\bk)}_{\psi_{g,\infty}} = (-1)^{\half{m_id_i}}$.
\par

Now we prove Lifting Theorem (A).
\begin{proof}[Proof of Theorem \ref{LT} (A)]
Let $f \in S_{2k}(\SL_2(\Z))$ be a Hecke eigenform, 
and $\tau_f$ be the irreducible cuspidal symplectic automorphic representation of $\GL_2(\A)$.
Take an integer $d > 0$ with $k > d$ such that $k+d-1 < k_n-n$ or $k-d > k_1-1$.
This condition implies that $\tau_{f, \infty} \not\cong \tau_{i,\infty}$ for any $1 \leq i \leq r$.
We set
\[
\psi_{f,g} = \tau_f[2d] \boxplus \psi_g.
\]
One can easily see that $\psi_{f,g} \in \Psi_2(\Sp_{n+2d}/\Q)$.
Moreover, since $\psi_{f,g,p}$ is unramified for any (finite) prime $p$, 
the local $A$-packet $\Pi_{\psi_{f,g,p}}$ contains a unique unramified representation $\pi_{f,g,p}$.
In addition, since
\[
\psi_{f,g,\infty} \cong (\rho_{2k-1} \boxtimes S_{2d}) \oplus
\left( \bigoplus_{l=1}^{t} \rho_{\alpha_l} \boxtimes S_{d_j} \right) \oplus \sgn^{n+2d} 
\]
is Adams--Johnson, 
the local $A$-packet $\Pi_{\psi_{f,g,\infty}}$ contains $L(V_{\bk'})$, 
where $\bk' = (k_1', \dots, k_{n+2d}') \in \Z^{n+2d}$ is given so that $k_1' \geq \dots \geq k_{n+2d}'$ and 
\begin{align*}
&\left\{ k_1'-1, k_2'-2, \dots, k_{n+2d}'-n-2d \right\}
\\&=
\left\{ k_1-1, k_2-2, \dots, k_n-n \right\}
\cup
\left\{
k+d-1, k+d-2, \dots, k-d
\right\}. 
\end{align*}
\par

Let us consider 
\[
\pi_{f,g} = 
\left( {\bigotimes_{p<\infty}}' \pi_{f,g,p} \right) \otimes L(V_{\bk'}) \in \Pi_{\psi_{f,g}}.
\]
Using Arthur's multiplicity formula (Theorem \ref{AMF}), 
we study when $\pi_{f,g}$ is automorphic.
It is automorphic if and only if 
\[
\ep_{\psi_{f,g}}(\alpha_{\tau_f[2d]})
= \pair{\Delta(\alpha_{\tau_f[2d]}), \pi_{f,g}}_{\psi_{f,g}}. 
\]
and 
\[
\ep_{\psi_{f,g}}(\alpha_{\tau_i[d_i]})
= \pair{\Delta(\alpha_{\tau_i[d_i]}), \pi_{f,g}}_{\psi_{f,g}} 
\]
for $i=1, \dots, r$.
\par

Before checking these conditions, we compute the root number $\ep(\tau_i \times \tau_f)$.
Since $\tau_{i,\fin}$ and $\tau_{f,\fin}$ are unramified everywhere, 
$\ep(\tau_i \times \tau_f)$ is equal to the local root number $\ep(\phi_{i,\infty} \otimes \rho_{2k-1})$. 
For $i = i_0$, we have
\begin{align*}
\ep(\phi_{i_0,\infty} \otimes \rho_{2k-1})
&= \left( \prod_{l=1}^{m_{i_0}} \ep(\rho_{\alpha_{i_0,l}} \otimes \rho_{2k-1}) \right) 
\ep(\sgn^n \otimes \rho_{2k-1})
\\&= 
\left\{
\begin{aligned}
&(-1)^{m_{i_0}+k} \iif k+d-1 < k_n-n, \\
&(-1)^k \iif k-d > k_1-1.
\end{aligned}
\right.
\end{align*}
For $i \not= i_0$, we have
\begin{align*}
\ep(\phi_{i,\infty} \otimes \rho_{2k-1})
&= \left( \prod_{l=1}^{m_i} \ep(\rho_{\alpha_{i,l}} \otimes \rho_{2k-1}) \right)
= 
\left\{
\begin{aligned}
&(-1)^{m_{i}d_i} \iif k+d-1 < k_n-n, \\
&1 \iif k-d > k_1-1,
\end{aligned}
\right.
\end{align*}
which is equal to $1$.
\par

Hence we have
\begin{align*}
\ep_{\psi_{f,g}}(\alpha_{\tau_f[2d]}) 
&= \prod_{i=1}^r \ep(\tau_i \times \tau_f)^{\min\{d_i, 2d\}}
\\&= (-1)^k \times 
\left\{
\begin{aligned}
&(-1)^n \iif k+d-1 < k_n-n, \\
&1 \iif k-d > k_1-1.
\end{aligned}
\right.
\end{align*}
On the other hand
\begin{align*}
\pair{\Delta(\alpha_{\tau_f[2d]}), \pi_{f,g}}_{\psi_{f,g}}
&= 
\pair{\alpha_{\rho_{2k-1} \otimes S_{2d}}, L(V_{\bk'})}
= (-1)^{d}.
\end{align*}
Therefore, the condition 
$\ep_{\psi_{f,g}}(\alpha_{\tau_f[2d]}) = \pair{\Delta(\alpha_{\tau_f[2d]}), \pi_{f,g}}_{\psi_{f,g}}$
is equivalent that 
\[
k \equiv 
\left\{
\begin{aligned}
&d+n &&\bmod 2\iif k+d-1 < k_n-n, \\
&d &&\bmod 2 \iif k-d > k_1-1.
\end{aligned}
\right.
\]
\par

Next, we check the condition 
$\ep_{\psi_{f,g}}(\alpha_{\tau_i[d_i]}) = \pair{\Delta(\alpha_{\tau_i[d_i]}), \pi_{f,g}}_{\psi_{f,g}}$
for $i \not= i_0$.
We have 
\begin{align*}
\pair{\Delta(\alpha_{\tau_i[d_i]}), \pi_{f,g}}_{\psi_{f,g}} 
&= \pair{\alpha_{\phi_{i,\infty} \boxtimes S_{d_i}}, L(V_{\bk'})}_{\psi_{f,g,\infty}} 
\\&= \pair{\alpha_{\phi_{i,\infty} \boxtimes S_{d_i}}, L(V_{\bk})}_{\psi_{g,\infty}}
\\&= \ep_{\psi_{g}}(\alpha_{\tau_i[d_i]})
\\&= \ep_{\psi_{f,g}}(\alpha_{\tau_i[d_i]}) \cdot \ep(\tau_i \times \tau_f)^{-\min\{d_i,2d\}}
= \ep_{\psi_{f,g}}(\alpha_{\tau_i[d_i]}), 
\end{align*}
as desired.
\par

Finally, since 
\[
\left( \prod_{i=1}^r \ep_{\psi_{f,g}}(\alpha_{\tau_i[d_i]}) \right) \cdot \ep_{\psi_{f,g}}(\alpha_{\tau_f[2d]}) = 1
\]
and 
\[
\left(\prod_{i=1}^r \pair{\Delta(\alpha_{\tau_i[d_i]}), \pi_{f,g}}_{\psi_{f,g}} \right) 
\cdot \pair{\Delta(\alpha_{\tau_f[2d]}), \pi_{f,g}}_{\psi_{f,g}} = 1, 
\]
when $k \equiv d+n \bmod 2$ if $k+d-1 < k_n-n$ (\resp when $k \equiv d \bmod 2$ if $k-d > k_1-1$), 
we have
$\ep_{\psi_{f,g}}(\alpha_{\tau_i[d_i]}) = \pair{\Delta(\alpha_{\tau_i[d_i]}), \pi_{f,g}}_{\psi_{f,g}}$
for $i = i_0$.
\par

In conclusion, $\pi_{f,g}$ is automorphic if and only if 
$k \equiv d+n \bmod 2$ when $k+d-1 < k_n-n$ (\resp $k \equiv d \bmod 2$ when $k-d > k_1-1$).
In this case, 
since $k'_{n+2d}-(n+2d) = \min\{k-d, k_n-n\} > 0$ 
so that $\pi_{f,g,\infty} \cong L(V_{\bk'})$ is discrete series, 
by a result of Wallach \cite{W}, we see that $\pi_{f,g}$ is cuspidal.
Let $\pi_{f,g}^{K_\fin, \p^-}$ be the subspace of $\pi_{f,g}$ 
on which $K_\fin = \prod_{p < \infty} \Sp_{n+2d}(\Z_p)$ acts by trivially and $\p^-$ acts by zero.
Note that $\pi_{f,g}^{K_\fin, \p^-}$ is isomorphic to $\rho_{\bk'}$ as a representation of $K_\infty$.
Fix a nonzero function $\varphi \in \pi_{f,g}^{K_\fin, \p^-}$.
For $v \in V_{\bk'}$, consider the integral
\[
\Phi_v(g) = \int_{K_\infty} \varphi(g \kappa) \overline{\rho_{\bk'}}(\kappa)v d \kappa.
\]
By the Schur orthogonality relations, 
$\Phi_v$ is nonzero for some $v \in V_{\bk'}$.
Since $\Phi_v$ satisfies that $\Phi_v(g \kappa_\infty) = \overline{\rho_{\bk'}}(\kappa_\infty)^{-1} \Phi_v(g)$, 
it gives a Siegel modular form $F_{f,g} \in S_{\rho_{\bk'}}(\Sp_{n+2d}(\Z))$.
This is a Hecke eigenform such that 
\[
L(s,F_{f,g}, \std) = L(s,g,\std) \prod_{i=1}^{2d}L(s+k+d-i,f).
\]
This completes the proof of Lifting Theorem (A) (Theorem \ref{LT} (A)).
\end{proof}

\begin{rem}\label{Ib2}
Recall that the $A$-parameter $\psi_g$ associated to $g$ satisfies that
\[
\ep_{\psi_g}(\alpha_{\tau_i[d_i]}) 
= \pair{\alpha_{\phi_{i, \infty} \boxtimes S_{d_i}}, L(V_\bk)}_{\psi_{g,\infty}} = (-1)^{\half{m_id_i}}
\]
for $i \not= i_0$.
In particular, if $\psi_g$ is tempered, 
then we must have $m_id_i \equiv 0 \bmod 4$ for $i \not= i_0$.
When $n = 2$, i.e., $g \in S_{\rho_\bk}(\Sp_2(\Z))$, 
the $A$-parameter $\psi_g$ does not of the form $\tau[1] \oplus \chi[1]$
with $\tau$ being an irreducible cuspidal orthogonal automorphic representation of $\GL_4(\A)$.
Using this fact, one can prove Ibukiyama's conjecture of Type $\mathrm{II}$ in Introduction (\S \ref{intro})
by a similar argument to the proof of Lifting Theorem (A).
\end{rem}
\par

Next, we prove Lifting Theorem (B).
\begin{proof}[Proof of Theorem \ref{LT} (B)]
Assume that $k \geq 4$ and $j \geq 1$.
Let $f \in S_{k,j}(\Sp_2(\Z))$ be a Hecke eigenform, 
and $\tau_f$ be the associated irreducible cuspidal symplectic automorphic representation of $\GL_4(\A)$
by Lemma \ref{nonSK}.
Take an integer $d > 0$ such that $d < \min\{(k/2)-1, (j/2)+1\}$ and that 
$k_i-i \not\in [(j/2)-d+1, (j/2)+k+d-2]$
for each $i = 1, \dots, n$.
The last condition implies that $\tau_{f, \infty} \not\cong \tau_{i,\infty}$ for any $1 \leq i \leq r$.
We set
\[
\psi_{f,g} = \tau_f[2d] \boxplus \psi_g.
\]
One can easily see that $\psi_{f,g} \in \Psi_2(\Sp_{n+4d}/\Q)$.
Moreover, since $\psi_{f,g,p}$ is unramified for any (finite) prime $p$, 
the local $A$-packet $\Pi_{\psi_{f,g,p}}$ contains a unique unramified representation $\pi_{f,g,p}$.
In addition, since 
\[
\psi_{f,g,\infty} \cong (\rho_{j+2k-3} \boxtimes S_{2d}) \oplus (\rho_{j+1} \boxtimes S_{2d}) \oplus
\left( \bigoplus_{j=1}^{t} \rho_{\alpha_j} \boxtimes S_{d_j} \right) \oplus \sgn^{n+4d} 
\]
is Adams--Johnson, 
the local $A$-packet $\Pi_{\psi_{f,g,\infty}}$ contains $L(V_{\bk'})$, 
where $\bk' = (k_1', \dots, k_{n+4d}') \in \Z^{n+4d}$ is given by $k_1' \geq \dots \geq k_{n+4d}'$ and 
\begin{align*}
&\left\{ k_1'-1, k_2'-2, \dots, k_{n+4d}'-n-4d \right\}
\\&=
\left\{ k_1-1, k_2-2, \dots, k_n-n \right\}
\\&\cup
\left\{
\half{j}+k+d-2, \half{j}+k+d-3, \dots, \half{j}+k-d-1
\right\}
\cup
\left\{
\half{j}+d, \half{j}+d-1, \dots, \half{j}-d+1
\right\}.
\end{align*}
\par

Let us consider 
\[
\pi_{f,g} = 
\left( {\bigotimes_{p<\infty}}' \pi_{f,g,p} \right) \otimes L(V_{\bk'}) \in \Pi_{\psi_{f,g}}.
\]
Using Arthur's multiplicity formula (Theorem \ref{AMF}), 
we study when $\pi_{f,g}$ is automorphic.
It is automorphic if and only if 
\[
\ep_{\psi_{f,g}}(\alpha_{\tau_f[2d]})
= \pair{\Delta(\alpha_{\tau_f[2d]}), \pi_{f,g}}_{\psi_{f,g}}. 
\]
and 
\[
\ep_{\psi_{f,g}}(\alpha_{\tau_i[d_i]})
= \pair{\Delta(\alpha_{\tau_i[d_i]}), \pi_{f,g}}_{\psi_{f,g}} 
\]
for $i=1, \dots, r$.
\par

Note that $\alpha_{i,l} \not\in [j+1, j+2k-3]$ for any $(i,l)$ 
since $k_i-i \not\in [(j/2)-d+1, (j/2)+k+d-2]$
for any $i = 1, \dots, n$.
By a similar argument to the proof of Lifting Theorem (A), 
we have 
\begin{align*}
\ep(\tau_i \times \tau_f) = 
\left\{
\begin{aligned}
&(-1)^{j+k} \iif i = i_0, \\
&1 \iif i \not= i_0.
\end{aligned}
\right.
\end{align*}
Hence we have
$\ep_{\psi_{f,g}}(\alpha_{\tau_f[2d]}) = (-1)^{j+k} = (-1)^k$ since $j$ is even.
On the other hand, 
\[
\pair{\Delta(\alpha_{\tau_f[2d]}), \pi_{f,g}}_{\psi_{f,g}}
=
\pair{\alpha_{\rho_{j+2k-3} \otimes S_{2d}}, L(V_{\bk'})}_{\psi_{f,g,\infty}}
\pair{\alpha_{\rho_{j+1} \otimes S_{2d}}, L(V_{\bk'})}_{\psi_{f,g,\infty}}
=1.
\]
Hence the condition 
$\ep_{\psi_{f,g}}(\alpha_{\tau_f[2d]}) = \pair{\Delta(\alpha_{\tau_f[2d]}), \pi_{f,g}}_{\psi_{f,g}}$
is equivalent that $k$ is even.
\par

For $i \not= i_0$, since $\ep(\tau_i \times \tau_f) = 1$, 
we have 
$\ep_{\psi_{f,g}}(\alpha_{\tau_i[d_i]}) = \pair{\Delta(\alpha_{\tau_i[d_i]}), \pi_{f,g}}_{\psi_{f,g}}$.
Therefore, this condition also holds for $i=i_0$ when $k$ is even.
\par

In conclusion, $\pi_{f,g}$ is automorphic if and only if $k \equiv j \equiv 0 \bmod 2$.
In this case, 
since $\pi_{f,g,\infty} \cong L(V_{\bk'})$ is discrete series, 
by a result of Wallach \cite{W}, we see that $\pi_{f,g}$ is cuspidal.
The space $\pi_{f,g}^{K_\fin, \p^-}$
gives a Hecke eigenform $F_{f,g} \in S_{\rho_{\bk'}}(\Sp_{n+4d}(\Z))$ 
such that 
\[
L(s,F_{f,g}, \std) = L(s,g,\std) \prod_{i=1}^{2d}L \left(s+d+\half{1}-i,f, \spin \right).
\]
This completes the proof of Lifting Theorem (B) (Theorem \ref{LT} (B)).
\end{proof}

\subsection{Arthur's multiplicity formula for holomorphic cusp forms}\label{sec.SMO}
Let $\F$ be a totally real field.
For each infinite place $v \mid \infty$, 
we consider the standard maximal compact group $K_v \subset \Sp_n(\F_v) = \Sp_n(\R)$
and the subalgebra $\p^-_v \subset \Lie(\Sp_n(\F_v)) \otimes_\R \C$ defined in \S \ref{sec.Siegel}.

\begin{defi}
Let $k = (k_v)_v \in \prod_{v \mid \infty}\Z_{>0}$.
A cusp form $\varphi \colon \Sp_n(\A) \rightarrow \C$ is called {\it holomorphic cusp form of weight $k$} 
if for each $v \mid \infty$, 
\begin{enumerate}
\item
$\varphi(g\kappa_v) = \det(\kappa_v)^{k_v} \varphi(g)$ for $\kappa_v \in K_v$ and $g \in \Sp_n(\A)$; 
\item
$\p_v^- \varphi = 0$.
\end{enumerate}
Here, we identify $K_v$ with $\U(n)$, and 
we denote by $\det$ the determinant of $K_v \cong \U(n)$.
\end{defi}
The space of holomorphic cusp forms of weight $k$ is denoted by $\Sc_k(\Sp_n(\A))$.
This is an $\Sp_n(\A_\fin)$-subrepresentation of $\AA^2(\Sp_n(\A))$.
Note that $\g_\infty$ does not act on $\Sc_k(\Sp_n(\A))$.
\par

\begin{ex}
Suppose that $\F = \Q$.
Then the Siegel modular form $F \in S_k(\Sp_n(\Z))$
gives a cusp form $\varphi_F \in \AA^2(\Sp_n(\A))$.
As explained in \S \ref{sec.Siegel}, 
$\varphi_F$ is a holomorphic cusp form of weight $k$.
\end{ex}
\par

For $k = (k_v) \in \prod_{v \mid \infty}\Z_{>0}$, 
we set $\Psi_2(\Sp_n/\F, \DD^{(n)}_k)$ to be the subset of $\Psi_2(\Sp_n/\F)$ 
consisting of global $A$-parameters $\psi$ such that
$\DD^{(n)}_{k_v} = L(V_{(k_v, \dots, k_v)}) \in \Pi_{\psi_v}$ for any $v \mid \infty$.
For $\psi \in \Psi_2(\Sp_n/\F,\DD^{(n)}_k)$, define 
\[
\Pi_\psi^{\fin} = \left\{\pi_\fin = {\bigotimes_{v < \infty}}' \pi_v
\ \middle|\ 
\pi_v \in \Pi_{\psi_v}, \ 
\text{$\pair{\cdot, \pi_v}_{\psi_v} = \1$ for almost all $v$}
\right\}.
\]
\par

Now we obtain Arthur's multiplicity formula for holomorphic cusp forms.
\begin{thm}\label{AMF-S}
For $k = (k_v) \in \prod_{v \mid \infty}\Z_{>0}$, 
we have
\[
\Sc_{k}(\Sp_n(\A)) \subseteq 
\bigoplus_{\psi \in \Psi_2(\Sp_n/\F, \DD^{(n)}_k)} \bigoplus_{\pi_\fin \in \Pi_\psi^\fin} m_{\pi_\fin, \psi} \pi_\fin
\]
as a representation of $\Sp_n(\A_\fin)$, 
where the non-negative integer $m_{\pi_\fin, \psi}$ is given by
\[
m_{\pi_\fin, \psi} = \left\{
\begin{aligned}
&1 \iif \pair{\cdot, \pi}_\psi \circ \Delta = \ep_\psi, \\
&0 \other
\end{aligned}
\right.
\]
with setting $\pi = \pi_\fin \otimes (\otimes_{v\mid \infty} \DD^{(n)}_{k_v})$ for $\pi_\fin \in \Pi_\psi^\fin$.
In particular, the space $\Sc_{k}(\Sp_n(\A))$ is multiplicity-free as a representation of $\Sp_n(\A_\fin)$.
Moreover, when $k_v > n$ for any $v \mid \infty$, the inclusion is in fact an equality.
\end{thm}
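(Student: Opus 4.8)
The plan is to intersect Arthur's multiplicity formula (Theorem \ref{AMF}) with the archimedean holomorphy and weight conditions. Since every cusp form on $\Sp_n(\A)$ is square-integrable, $\Sc_k(\Sp_n(\A))$ is an $\Sp_n(\A_\fin)$-stable subspace of $\AA^2(\Sp_n(\A))$: it is the space of $\varphi$ lying in the cuspidal part and satisfying, for every $v \mid \infty$, that $\varphi(g\kappa_v) = {\det}(\kappa_v)^{k_v}\varphi(g)$ for $\kappa_v \in K_v$ and $\p_v^-\varphi = 0$. Writing the decomposition of Theorem \ref{AMF} as $\AA^2(\Sp_n(\A)) \cong \bigoplus_{\psi}\bigoplus_{\pi\in\Pi_\psi}m_{\pi,\psi}\pi$ with $\pi = \pi_\fin \otimes \pi_\infty$ and $\pi_\infty = \bigotimes_{v\mid\infty}\pi_{v,K_v}$, and using that $K_v$ and $\p_v^-$ act only on the $v$-th archimedean tensor factor, I would identify the part of the summand $m_{\pi,\psi}\pi$ lying in $\Sc_k(\Sp_n(\A))$ with $m_{\pi,\psi}$ copies of $\pi_\fin \otimes \bigotimes_{v\mid\infty}(\pi_{v,K_v})^{[k_v]}$ when $\pi$ is cuspidal and with $0$ otherwise, where $(\pi_{v,K_v})^{[k_v]}$ is the space of vectors on which $K_v$ acts by ${\det}^{k_v}$ and $\p_v^-$ acts by zero.

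The key local ingredient is the description of the lowest weight module recalled in \S\ref{sec.Siegel+}. Viewing $\pi_{v,K_v}$ as an $(\sp_n(\C),K_v)$-module, a nonzero $w$ with $\kappa_v\cdot w = {\det}(\kappa_v)^{k_v}w$ and $\p_v^-w = 0$ spans a $\UU(\kk_\C\oplus\p^-)$-submodule isomorphic to the one-dimensional module $V_{(k_v,\dots,k_v)}$, so $\UU(\sp_n(\C))w$ is a quotient of the generalized Verma module $\MM(V_{(k_v,\dots,k_v)})$; since $\pi_{v,K_v}$ is irreducible this forces $\pi_{v,K_v}\cong L(V_{(k_v,\dots,k_v)}) = \DD^{(n)}_{k_v}$, and then $(\pi_{v,K_v})^{[k_v]}$ is one-dimensional because $L(V_\bk)$ contains $V_\bk$ as a $\UU(\kk_\C\oplus\p^-)$-submodule with multiplicity one. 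Hence $m_{\pi,\psi}\pi$ contributes to $\Sc_k(\Sp_n(\A))$ exactly when $\pi_{v,K_v}\cong\DD^{(n)}_{k_v}$ for every $v\mid\infty$, i.e.\ when $\psi\in\Psi_2(\Sp_n/\F,\DD^{(n)}_k)$ and $\pi = \pi_\fin\otimes\bigotimes_{v\mid\infty}\DD^{(n)}_{k_v}$ with $\pi_\fin\in\Pi_\psi^\fin$; in that case its contribution is $m_{\pi,\psi}\pi_\fin$, and $m_{\pi,\psi}$ is the integer $m_{\pi_\fin,\psi}$ of the statement. Summing over all such $\psi$ and $\pi_\fin$ and keeping only the $\pi$ that are cuspidal recovers $\Sc_k(\Sp_n(\A))$ exactly, so dropping the cuspidality requirement yields the asserted inclusion.

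For multiplicity-freeness I would first check that each relevant $\pi$ occurs in the multiset $\Pi_\psi$ with multiplicity one: at finite places this is Propositions \ref{mult1} and \ref{51}, while at a real place $v$ the multiplicity of $\DD^{(n)}_{k_v}$ in $\Pi_{\psi_v}$ is at most one by Proposition \ref{MR} when $k_v\le n$ and by Proposition \ref{discrete} together with Theorem \ref{AJ} when $k_v>n$ (where $\psi_v$ is necessarily Adams--Johnson). Since $m_{\pi_\fin,\psi}\in\{0,1\}$, it then suffices to see that distinct pairs $(\psi,\pi_\fin)$ give non-isomorphic $\Sp_n(\A_\fin)$-modules: if $\pi_\fin\cong\pi'_\fin$ then the corresponding $\pi = \pi_\fin\otimes\bigotimes_{v\mid\infty}\DD^{(n)}_{k_v}\in\Pi_\psi$ and $\pi' = \pi'_\fin\otimes\bigotimes_{v\mid\infty}\DD^{(n)}_{k_v}\in\Pi_{\psi'}$ are nearly equivalent, whence $\psi\cong\psi'$ by Corollary \ref{near} and then $\pi_\fin=\pi'_\fin$ since finite-place packets are multiplicity-free. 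Therefore the right-hand side, and a fortiori its subspace $\Sc_k(\Sp_n(\A))$, is multiplicity-free. Finally, when $k_v>n$ for every $v\mid\infty$ each $\DD^{(n)}_{k_v}$ is a discrete series representation, so by Wallach's theorem \cite{W} every $\pi$ occurring in $\AA^2(\Sp_n(\A))$ with archimedean component $\bigotimes_{v\mid\infty}\DD^{(n)}_{k_v}$ is cuspidal; the cuspidality requirement above becomes vacuous and the inclusion upgrades to an equality.

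The hard part is not the global bookkeeping but keeping track of cuspidality. In the range $k_v\le n$ a parameter $\psi\in\Psi_2(\Sp_n/\F,\DD^{(n)}_k)$ with $m_{\pi_\fin,\psi}=1$ can contribute a \emph{residual} square-integrable form whose archimedean component is $\bigotimes_{v\mid\infty}\DD^{(n)}_{k_v}$; such a form lies in $\AA^2(\Sp_n(\A))$ but not in $\Sc_k(\Sp_n(\A))$, which is why the inclusion is in general strict and equality in the regular range has to be extracted from Wallach's vanishing result. The only other point requiring genuine care is the local archimedean lemma of the second paragraph, and that is essentially a reformulation of the construction of $L(V_\bk)$.
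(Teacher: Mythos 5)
Your proof is correct and follows essentially the same route as the paper: decompose $\AA^2(\Sp_n(\A))$ via Arthur's multiplicity formula, observe that the archimedean holomorphy and weight conditions pick out exactly the lowest weight vectors in each automorphic constituent $\pi$ with $\pi_v\cong\DD^{(n)}_{k_v}$ for all $v\mid\infty$, invoke the local multiplicity-one results at all places to get multiplicity-freeness, and use Wallach's cuspidality criterion for the equality in the regular range. You supply two pieces of detail that the paper's proof compresses: (i) the local lemma that a $K_v$-isotypic, $\p_v^-$-annihilated vector in an irreducible $(\g_v,K_v)$-module forces that module to be the quotient $L(V_{(k_v,\dots,k_v)})$ of the generalized Verma module, which is what justifies the reduction to $\psi\in\Psi_2(\Sp_n/\F,\DD^{(n)}_k)$; and (ii) the explicit invocation of Corollary \ref{near} to see that a given $\pi_\fin$ cannot arise from two inequivalent parameters $\psi$, which is the step that actually upgrades the per-parameter bound $m_{\pi_\fin,\psi}\le1$ to multiplicity-freeness of $\Sc_k(\Sp_n(\A))$ as a whole. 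Both points are implicit in the paper's argument, and your filling them in is a genuine improvement in rigor rather than a different approach.
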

\begin{proof}
By Arthur's multiplicity formula (Theorem \ref{AMF})
together with the multiplicity one results for $\DD^{(n)}_{k_v}$ in real local $A$-packets 
(Theorem \ref{AJ}, Proposition \ref{MR})
and for the lowest weight vectors in $\DD^{(n)}_{k_v}$, 
the right hand side of the inclusion is the direct sum of 
the subspaces of lowest weight vectors in
$\pi = \otimes_v' \pi_v \subset \AA^2(\Sp_n(\A))$
such that $\pi_v \cong \DD^{(n)}_{k_v}$ for each $v \mid \infty$.
Hence we have the inclusion.
\par

The integer $m_{\pi_\fin, \psi} = m_{\pi, \psi}$ is given in Arthur's multiplicity formula (Theorem \ref{AMF}).
By the multiplicity one results for $p$-adic local $A$-packets (Theorem \ref{mult1}, Proposition \ref{51}), 
we see that the global $A$-packet $\Pi_\psi^\fin$ is a (multiplicity-free) set.
Hence the multiplicity of $\pi_\fin \in \Pi_\psi^\fin$ in $\Sc_{k}(\Sp_n(\A))$ 
is less than or equal to $m_{\pi_\fin, \psi}$.
In particular, $\Sc_{k}(\Sp_n(\A))$ is multiplicity-free.
\par

When $k_v > n$ for any $v \mid \infty$, 
the automorphic representation 
$\pi = \pi_\fin \otimes (\otimes_{v\mid \infty} \DD^{(n)}_{k_v}) \subset \AA^2(\Sp_n(\A))$
is cuspidal by Wallach \cite{W}.
Hence the inclusion is an equality in this case.
\end{proof}

We can now prove the strong multiplicity one theorem (Theorem \ref{SMO}).
Let $\F = \Q$.
\begin{proof}[Proof of Theorem \ref{SMO}]
We consider the cuspidal automorphic representations $\pi_{F_1}, \pi_{F_2} \subset \AA^2(\Sp_n(\A))$
generated by the cusp forms $\varphi_{F_1}$ and $\varphi_{F_2}$, respectively.
By Lemma \ref{irred} and by the assumption, 
they are irreducible and are nearly equivalent to each other.
Hence they belong to the same $A$-packet, say $\Pi_\psi$, by Corollary \ref{near}.
In particular, for each place $v$, the local factors $\pi_{F_1, v}$ and $\pi_{F_2, v}$ are in $\Pi_{\psi_v}$.
\par

When $v = p < \infty$, 
by Propositions \ref{44} and \ref{51}, 
the unramified representations $\pi_{F_1, p}$ and $\pi_{F_2, p}$ are determined uniquely by $\psi_{p}$.
In particular, $\pi_{F_1, p} \cong \pi_{F_2, p}$.
When $v = \infty$, since $\{k_1, k_2\} \not= \{\half{n}, \half{n}+1\}$, by Proposition \ref{MR}, 
we have $k_1 = k_2$ so that $\pi_{F_1, \infty} \cong \pi_{F_2, \infty}$.
Hence we conclude that $\pi_{F_1} \cong \pi_{F_2}$ as $\Sp_n(\A_\fin) \times (\g_\infty, K_\infty)$-modules.
\par

By the multiplicity one result for $\Sc_{k_1}(\Sp_n(\A))$ in Theorem \ref{AMF-S}, 
we have $\pi_{F_1} = \pi_{F_2}$ as a subspace of $\AA^2(\Sp_n(\A))$.
Since both $\varphi_{F_1}$ and $\varphi_{F_2}$ are 
$\Sp_n(\widehat{\Z})$-fixed lowest weight vectors in $\pi_{F_1} = \pi_{F_2}$, 
there exists a constant $c \in \C^\times$ such that $\varphi_{F_2} = c \varphi_{F_1}$. 
This implies that $F_2 = c F_1$.
\end{proof}
\par

\appendix
\section{The hypothesis Langlands group and Arthur's character}\label{arthur-char}
In \S \ref{globalA}, for each global $A$-parameter $\psi \in \Psi_2(\Sp_n/\F)$, 
we have defined Arthur's character $\ep_\psi$ of $A_\psi$.
One might seem that our definition differs from Arthur's original definition.
In this appendix, we formally explain the original definition of Arthur's character 
using the hypothesis Langlands group, 
and show that it agrees with our definition.

\subsection{Hypothesis Langlands group}
Let $\F$ be a number field and $W_\F$ be the Weil group of $\F$.
We denote the set of irreducible unitary cuspidal automorphic representations of $\GL_m(\A)$
by $\AA_\cusp(\GL_m(\A))$.
It is hoped that 
there exists a locally compact group $\LL_\F$ together with a surjection $\LL_\F \twoheadrightarrow W_\F$
such that
it has a canonical bijection
\[
\AA_\cusp(\GL_m(\A))
\longleftrightarrow
\left\{
\text{$m$-dimensional irreducible unitary representations of $\LL_F$}
\right\}.
\]
The group $\LL_F$ is called the {\it hypothesis Langlands group} of $\F$.
Moreover, when $\tau \in \AA_\cusp(\GL_m(\A))$ corresponds to $\phi \colon \LL_F \rightarrow \GL_m(\C)$, 
it is expected that 
$\tau$ is orthogonal (\resp symplectic) if and only if $\phi$ is orthogonal (\resp symplectic).
\par

Using the hypothesis Langlands group $\LL_\F$, 
the global Langlands conjecture roughly states that 
there is a canonical surjection
\[
\xymatrix{
\left\{
\text{irreducible automorphic representations of $\Sp_n(\A)$}
\right\}
\ar@{->>}[d]\\
\left\{
\psi \colon \LL_F \times \SL_2(\C) \rightarrow \SO_{2n+1}(\C)
\right\}.
}
\]
Here we consider a semisimple representation $\psi \colon \LL_F \times \SL_2(\C) \rightarrow \SO_{2n+1}(\C)$
so that $\psi$ decomposes into a direct sum 
\[
\psi = (\phi_1 \boxtimes S_{d_1}) \oplus \dots \oplus (\phi_r \boxtimes S_{d_r})
\]
for some irreducible representations $\phi_1, \dots, \phi_r$ of $\LL_F$.
Replacing $\phi_i$ as corresponding $\tau_i \in \AA_\cusp(\GL_{m_i}(\A))$, 
we obtain the notion of global $A$-parameters (see \S \ref{globalA}).
\par

\subsection{Component groups}
Let $\psi = \oplus_{i=1}^r \phi_i \boxtimes S_{d_i} \colon \LL_F \times \SL_2(\C) \rightarrow \SO_{2n+1}(\C)$.
Set $S_\psi$ and $S_{\psi}^+$ to be 
\[
S_{\psi} = \cent(\im(\psi), \SO_{2n+1}(\C)), 
\quad
S_{\psi}^+ = \cent(\im(\psi), \mathrm{O}_{2n+1}(\C)), 
\]
respectively.
We define the {\it component group} $\Sc_\psi$ and $\Sc_\psi^+$ by
\[
\Sc_\psi = S_\psi/S_\psi^\circ, 
\quad
\Sc_\psi^+ = S_\psi^+/(S_\psi^+)^\circ,
\]
respectively.
We note that $S_\psi^+ = S_\psi \times \{\pm \1_{2n+1}\}$
so that 
$\Sc_\psi^+ = \Sc_\psi \times \{\pm \1_{2n+1}\}$.
\par

Now we assume that $\psi$ is {\it discrete}. 
This means that $\psi$ is a multiplicity-free sum of irreducible orthogonal representations.
Then the image of $\phi_i \boxtimes S_{d_i}$ is contained in an orthogonal group $\mathrm{O}_{m_id_i}(\C)$,
and we have
\[
\Sc_\psi^+ \cong S_\psi^+ = \{\pm \1_{m_1d_1}\} \times \dots \times \{\pm \1_{m_rd_r}\}
\cong \{\pm1\}^r.
\]
By this observation, we obtain the definition of $A_\psi$.
There exists a canonical bijection
\[
A_\psi^+ \longleftrightarrow \Sc_\psi,\ 
\alpha_{\tau_i[d_i]} \longleftrightarrow -\1_{m_id_i}.
\]
In particular, the central element $z_\psi \in A_\psi$ corresponds to $-\1_{2n+1} \in \Sc_\psi^+$, 
which is the center of $\mathrm{O}_{2n+1}(\C)$.

\subsection{Formal definition of Arthur's characters}
Now we explain the definition of Arthur's character formally.
Let $\psi = \oplus_{i=1}^r \phi_i \boxtimes S_{d_i} \colon \LL_F \times \SL_2(\C) \rightarrow \SO_{2n+1}(\C)$
be discrete.
Define a representation 
\[
\Ad_\psi \colon S_\psi \times \LL_\F \times \SL_2(\C) \rightarrow \GL(\so_{2n+1}(\C))
\]
on the Lie algebra $\so_{2n+1}(\C)$ of $\SO_{2n+1}(\C)$ by setting 
\[
\Ad_\psi(s, g, h) = \Ad(s \cdot \psi(g,h))
\]
for $s \in S_\psi$ and $(g,h) \in \LL_\F \times \SL_2(\C)$, 
where $\Ad$ is the adjoint representation of $\SO_{2n+1}(\C)$.
Since $\Ad$ is invariant under Killing form on $\so_{2n+1}(\C)$, 
this representation is orthogonal.
We decompose 
\[
\Ad_\psi = \bigoplus_{\alpha} (\eta_\alpha \boxtimes \phi_\alpha \boxtimes S_{d_\alpha}), 
\]
where $\eta_\alpha$ and $\phi_\alpha$ are irreducible representations of $S_\psi$ and $\LL_\F$, respectively.
After stating \cite[Theorem 1.5.2]{Ar}, 
Arthur defined a character $\ep_\psi$ of $\Sc_\psi = S_\psi$ by 
\[
\ep_\psi(s) = {\prod_{\alpha}}' 
\eta_\alpha(s),
\quad s \in S_\psi, 
\]
where $\prod'_\alpha$ denotes the product over those indices $\alpha$
such that $\phi_\alpha$ is symplectic and $\ep(1/2, \phi_\alpha) = -1$.
Here, when $\phi_\alpha \leftrightarrow \tau_\alpha \in \AA_\cusp(\GL_{m_\alpha}(\A))$, 
we mean that $\ep(1/2, \phi_\alpha) = \ep(1/2, \tau_\alpha)$.
We extend $\ep_\psi$ to $\Sc_\psi^+ = S_\psi^+$ by setting $\ep_\psi(-\1_{2n+1}) = 1$.

\begin{prop}\label{agree}
Let $\psi = \oplus_{i=1}^r \phi_i \boxtimes S_{d_i} \colon \LL_F \times \SL_2(\C) \rightarrow \SO_{2n+1}(\C)$
be discrete, 
where $\phi_i$ is an irreducible representation of $\LL_\F$ 
corresponding to $\tau_i \in \AA_\cusp(\GL_{m_i}(\A))$.
Then we have 
\[
\ep_\psi(-\1_{m_id_i}) = \prod_{j \not= i} \ep \left(\half{1}, \tau_i \times \tau_j \right)^{\min\{d_i,d_j\}}.
\]
\end{prop}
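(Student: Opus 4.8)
The plan is to compute the adjoint representation $\Ad_\psi$ on $\so_{2n+1}(\C)$ explicitly, determine which of its irreducible constituents $\eta_\alpha\boxtimes\phi_\alpha\boxtimes S_{d_\alpha}$ satisfy $\eta_\alpha(-\1_{m_id_i})=-1$, and then identify the resulting sign with the Rankin--Selberg root number $\ep(\half{1},\tau_i\times\tau_j)$.

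First I would use the $\SO_{2n+1}(\C)$-equivariant isomorphism $\so_{2n+1}(\C)\cong\wedge^2\C^{2n+1}$, where $\C^{2n+1}$ carries the action of $\LL_\F\times\SL_2(\C)$ through $\psi$, so that $\C^{2n+1}=\bigoplus_{i=1}^r V_i$ with $V_i$ the space of $\phi_i\boxtimes S_{d_i}$. This gives an $S_\psi^+\times\LL_\F\times\SL_2(\C)$-decomposition
\[
\Ad_\psi\cong\bigoplus_{i=1}^r\wedge^2 V_i\ \oplus\ \bigoplus_{1\le i<j\le r}V_i\otimes V_j.
\]
Since $\psi$ is discrete, $S_\psi^+\cong\{\pm\1_{m_1d_1}\}\times\dots\times\{\pm\1_{m_rd_r}\}$, and $-\1_{m_id_i}$ acts on $\C^{2n+1}$ by $-1$ on $V_i$ and by $+1$ on every $V_k$ with $k\ne i$; hence it acts by the scalar $-1$ on the subrepresentation $\bigoplus_{j\ne i}V_i\otimes V_j$ and trivially on its complement. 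Therefore the constituents of $\Ad_\psi$ with $\eta_\alpha(-\1_{m_id_i})=-1$ are exactly those occurring inside $\bigoplus_{j\ne i}V_i\otimes V_j$. Substituting this into Arthur's formula $\ep_\psi(s)={\prod_\alpha}'\,\eta_\alpha(s)$ gives
\[
\ep_\psi(-\1_{m_id_i})=\prod_{j\ne i}(-1)^{N_{ij}},
\]
where $N_{ij}$ is the number, counted with multiplicity, of irreducible constituents $\phi_\alpha\boxtimes S_{d_\alpha}$ of $V_i\otimes V_j$ for which $\phi_\alpha$ is symplectic and $\ep(\half{1},\phi_\alpha)=-1$.

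Next I would write $V_i\otimes V_j=(\phi_i\otimes\phi_j)\boxtimes(S_{d_i}\otimes S_{d_j})$ and apply Clebsch--Gordan: $S_{d_i}\otimes S_{d_j}$ is a direct sum of exactly $\min\{d_i,d_j\}$ of the representations $S_d$. As the condition defining $N_{ij}$ involves only the $\LL_\F$-factor, it follows that $N_{ij}=\min\{d_i,d_j\}\cdot M_{ij}$, where $M_{ij}$ is the total multiplicity in $\phi_i\otimes\phi_j$ of symplectic irreducible constituents $\rho$ with $\ep(\half{1},\rho)=-1$. It then remains to prove $(-1)^{M_{ij}}=\ep(\half{1},\tau_i\times\tau_j)$. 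For this I would combine: (i) $\phi_i\otimes\phi_j$ is self-dual, since $\phi_i$ and $\phi_j$ are; (ii) $\ep(\half{1},\tau_i\times\tau_j)=\ep(\half{1},\phi_i\otimes\phi_j)=\prod_\rho\ep(\half{1},\rho)^{m(\rho)}$ by multiplicativity of $\ep$-factors under direct sums, the product being over the irreducible constituents $\rho$ of $\phi_i\otimes\phi_j$; (iii) an orthogonal self-dual $\rho$ has $\ep(\half{1},\rho)=1$; and (iv) a non-self-dual $\rho$ occurs with the same multiplicity as $\rho^\vee$, and $\ep(\half{1},\rho)\,\ep(\half{1},\rho^\vee)=1$ for unitary $\rho$. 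Grouping the factors in (ii) according to (iii) and (iv), only the symplectic $\rho$ with $\ep(\half{1},\rho)=-1$ survive, leaving $\prod(-1)^{m(\rho)}=(-1)^{M_{ij}}$, and the proposition follows.

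The step I expect to be the main obstacle is (iii)--(iv): carefully justifying, in automorphic language, that an orthogonal self-dual representation has root number $+1$ and that any representation pairs with its contragredient to give root number $1$, and then verifying that the resulting count of multiplicities in $\phi_i\otimes\phi_j$ matches precisely Arthur's restricted product ${\prod_\alpha}'$. A minor point to settle along the way is that Arthur's character, a priori defined on $S_\psi$, extends to the element $-\1_{m_id_i}\in S_\psi^+$ compatibly with the normalization $\ep_\psi(-\1_{2n+1})=1$; this is automatic because $-\1_{2n+1}$ acts trivially on $\so_{2n+1}(\C)$, hence on every $\eta_\alpha$.
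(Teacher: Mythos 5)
Your proof follows essentially the same route as the paper: decompose $\so_{2n+1}(\C)\cong\wedge^2\C^{2n+1}$ into $\bigoplus_i\wedge^2 V_i\oplus\bigoplus_{i<j}V_i\otimes V_j$, observe that $-\1_{m_id_i}$ acts by $-1$ exactly on $\bigoplus_{j\ne i}V_i\otimes V_j$, apply Clebsch--Gordan to $S_{d_i}\otimes S_{d_j}$, and finish by the multiplicativity of $\ep$-factors. Your handling of the extension to $S_\psi^+$ (so that $\ep_\psi(-\1_{m_id_i})$ makes sense when $m_id_i$ is odd) also matches the paper, which achieves the same thing by evaluating on $s_I$ with $I=\{1,\dots,r\}\setminus\{i\}$ and using $\ep_\psi(-\1_{2n+1})=1$.

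The one substantive point of divergence is your step (iii), where you assert that an orthogonal self-dual irreducible $\rho$ satisfies $\ep(\half{1},\rho)=1$. That is Lapid's theorem on root numbers of representations of orthogonal type; it is true (in the formal framework here), but it is a nontrivial result that you should cite, and the paper does \emph{not} need it. The paper instead exploits the Clebsch--Gordan parities: every $S_{d_\alpha}$ inside $S_{d_i}\otimes S_{d_j}$ has $d_\alpha\equiv d_i+d_j-1\bmod 2$, and since each constituent $\eta_\alpha\boxtimes\phi_\alpha\boxtimes S_{d_\alpha}$ of the orthogonal representation $\Ad_\psi$ has $\eta_\alpha$ quadratic, ``$\phi_\alpha$ symplectic'' is effectively the condition ``$d_\alpha$ even''. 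Hence the pair $(i,j)$ contributes to $\ep_\psi(s_I)$ only when $d_i\not\equiv d_j\bmod 2$, and to remove that parity restriction one needs only Arthur's Theorem 1.5.3 ($\ep(\half{1},\tau_i\times\tau_j)=1$ when $d_i\equiv d_j\bmod 2$), which is already part of the endoscopic classification. You can repair your argument in the same spirit without invoking Lapid: when $\phi_i\otimes\phi_j$ is symplectic its orthogonal constituents occur with even multiplicity and drop out of the product for free, and when $\phi_i\otimes\phi_j$ is orthogonal the symplectic constituents occur with even multiplicity so that $M_{ij}$ is even, while $\ep(\half{1},\tau_i\times\tau_j)=1$ by Arthur's Theorem 1.5.3. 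Either fix is fine; as written, your (iii) is the only uncited, nontrivial input, and it should be flagged as such.
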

\begin{proof}
We decompose 
$\Ad_\psi = \oplus_{\alpha} (\eta_\alpha \boxtimes \phi_\alpha \boxtimes S_{d_\alpha})$ as above.
Note that $\phi_\alpha$ is symplectic if and only if $d_\alpha$ is even 
since $\eta_\alpha$ is a quadratic character.
Hence, for $s \in S_\psi$, 
\[
\ep_\psi(s) = {\prod_{\alpha}}' \eta_\alpha(s)
= {\prod_{\alpha}}'' \ep \left(\half{1}, \phi_\alpha \right), 
\]
where $\prod''_\alpha$ denotes the product over those indices $\alpha$
such that $d_\alpha$ is even and $\eta_\alpha(s) = -1$.
This means that when we compute $\ep_\psi(s)$, 
we only consider the $(-1)$-eigenspace of $\Ad(s)$, 
which is a subrepresentation of $\Ad \circ \psi = \oplus_{\alpha} (\phi_\alpha \boxtimes S_{d_\alpha})$.
\par

Let $I$ be a subset of $\{1, \dots, r\}$ such that the sum $\sum_{i \in I} m_i d_i$ is even.
Set $s_I = \prod_{i \in I} -\1_{m_i d_i} \in S_\psi$.
Namely $s_I$ acts on $\phi_i \boxtimes S_{d_i}$ by $-1$ if $i \in I$, and by $1$ if $i \not\in I$.
Note that any element in $S_\psi$ is of this form.
\par

Now we compute $\ep_\psi(s_I)$.
First note that 
\[
\Ad \circ \psi \cong \bigoplus_{i} \Ad(\phi_i \boxtimes S_{d_i})
\oplus 
\bigoplus_{i < j} (\phi_i \otimes \phi_j) \boxtimes (S_{d_i} \otimes S_{d_j}).
\]
We see that $\Ad(s_I)$ preserves each summands, and 
\begin{itemize}
\item
$\Ad(s_I)$ acts on $\Ad(\phi_i \boxtimes S_{d_i})$ by $1$; 
\item
$\Ad(s_I)$ acts on $(\phi_i \otimes \phi_j) \boxtimes (S_{d_i} \otimes S_{d_j})$ 
by $-1$ if and only if exactly one of $i$ and $j$ belongs to $I$.
\end{itemize}
On the other hand, 
\[
S_{d_i} \otimes S_{d_j} \cong S_{d_i+d_j-1} \oplus S_{d_i+d_j-3} \oplus \dots \oplus S_{|d_i-d_j|+1}.
\]
In particular, 
$S_{d_i} \otimes S_{d_j}$ has $\min\{d_i,d_j\}$ irreducible summands, 
which are of the form $S_{d_\alpha}$ such that $d_\alpha \equiv d_i+d_j-1 \bmod 2$.
Therefore, by the multiplicativity of $\ep$-factors, we have
\[
\ep_\psi(s_I) = {\prod_{i<j}}''' \ep \left(\half{1}, \tau_i \times \tau_j \right)^{\min\{d_i,d_j\}}, 
\]
where $\prod'''_{i<j}$ denotes the product over those pairs of indices $(i,j)$
such that $i<j$, $d_i \not\equiv d_j \bmod 2$, and 
exactly one of $i$ and $j$ belongs to $I$.
However, \cite[Theorem 1.5.3]{Ar} states that $\ep(1/2, \tau_i \otimes \tau_j) = 1$ if $d_i \equiv d_j \bmod 2$.
Hence one can remove the condition $d_i \not\equiv d_j \bmod 2$ in the definition of $\prod'''_{i<j}$.
In particular, when $I = \{i\}$ (which implies that $m_id_i$ is even), we obtain 
\[
\ep_\psi(-\1_{m_id_i}) = \prod_{j \not= i} \ep \left(\half{1}, \tau_i \times \tau_j \right)^{\min\{d_i,d_j\}}.
\]
\par

Now suppose that $m_id_i$ is odd.
Then we can consider $I = \{1, \dots, r\} \setminus \{i\}$.
Since we define $\ep_\psi(-\1_{2n+1})=1$, we have
\[
\ep_\psi(-\1_{m_id_i}) = 
\ep_\psi(s_I) = \prod_{j \not= i} \ep \left(\half{1}, \tau_i \times \tau_j \right)^{\min\{d_i,d_j\}}.
\]
This completes the proof.
\end{proof}


\end{document}